\newtheorem{thm}{Theorem}[section]
\newtheorem{prop}[thm]{Proposition}
\newtheorem{cor}[thm]{Corollary}
\newtheorem{lemma}[thm]{Lemma}
\newtheorem{fact}[thm]{Fact}
\theoremstyle{definition}
\newtheorem{rmk}[thm]{Remark}
\newtheorem{defn}[thm]{Definition}
\newtheorem{nota}[thm]{Notation}
\newtheorem{obs}[thm]{Observation}
\newtheorem{claim}[thm]{Claim}
\newtheorem*{claimx}{Claim}
\newtheorem*{claim1}{Claim 1}
\newtheorem*{claim2}{Claim 2}
\newtheorem*{claim3}{Claim 3}
\newtheorem*{subsubclaim}{Subsubclaim}
\newtheorem*{subclaim1}{Subclaim 1}
\newtheorem*{subclaim2}{Subclaim 2}
\newtheorem*{thmx}{Theorem \ref{beta}}
\newtheorem*{thmy}{Theorem \ref{nu str}}
\newtheorem*{propx}{Proposition \ref{modifi}}
\newtheorem*{corx}{Corollary \ref{SOP1NSOP2->ATP}}
\newtheorem*{case1}{Case 1 [$\sigma_i\perp\sigma_j$]}
\def \tree {{^{\omega >}}2}
\def \trel {{{^{{\omega_1} >}}}2}
\def \tren {{{^{n>}}2}}
\def \trek {{{^{k>}}2}}
\def \tri {\trianglelefteq}
\def \trr {\trianglerighteq}
\newcommand{\trn}{%
  \mathrel{\ooalign{$\lneq$\cr\raise.21ex\hbox{$\lhd$}\cr}}}
\newcommand{\trrn}{%
  \mathrel{\ooalign{$\gneq$\cr\raise.21ex\hbox{$\rhd$}\cr}}}
\def \coc {{^{\frown}}}
\def \lr {{\langle \rangle}}
\def \lor {{\langle 0 \rangle}}
\def \llr {{\langle 1 \rangle}}
\def \lll {{\langle l \rangle}}
\def \res {\lceil}
\def \cl {{\rm cl}}
\def \paratr {\langle a_{\eta}\rangle_{\eta\in\tree}}
\def \la {\langle}
\def \ra {\rangle}
\def \L {\mathcal{L}}
\def \sopl {\operatorname{SOP}_1}
\def \sopz {\operatorname{SOP}_2}
\title{SOP$_1$, SOP$_2$, and antichain tree property}
\author[JinHoo Ahn and Joonhee Kim]{JinHoo Ahn and Joonhee Kim}
\date {\today}
\thanks{The first author has been supported by Samsung Science Technology Foundation under project number SSTF-BA1301-03 and a KIAS Individual Grant (CG079801) at Korea Institute for Advanced Study. The second author has been supported by Samsung Science Technology Foundation under project number SSTF-BA1301-03, NRF of Korea grants 2018R1D1A1A02085584 and 2021R1A2C1009639, and a KIAS Individual Grant (6G091801) at Korea Institute for Advanced Study.}
\keywords{SOP$_1$, SOP$_2$, tree property, tree indiscernibility}
\begin{document}

\maketitle

\begin{abstract}
In this paper, we study some tree properties and their related indiscernibilities. First, we prove that SOP$_2$ can be witnessed by a formula with a tree of tuples holding `arbitrary homogeneous inconsistency' (e.g., weak $k$-TP$_1$ conditions or other possible inconsistency configurations). 

And we introduce a notion of tree-indiscernibility, which preserves witnesses of SOP$_1$, and by using this, we investigate the problem of (in)equality of SOP$_1$ and SOP$_2$. 

Assuming the existence of a formula having SOP$_1$ such that no finite conjunction of it has SOP$_2$, we observe that the formula must witness some tree-property-like phenomenon, which we will call the antichain tree property (ATP, see Definition \ref{def of AT}). We show that ATP implies SOP$_1$ and TP$_2$, but the converse of each implication does not hold. So the class of NATP theories (theories without ATP) contains the class of NSOP$_1$ theories and the class of NTP$_2$ theories.

At the end of the paper, we construct a structure whose theory has a formula having ATP, but any conjunction of the formula does not have SOP$_2$. So this example shows that  SOP$_1$ and SOP$_2$ are not the same at the level of formulas, i.e., there is a formula having SOP$_1$, while any finite conjunction of it does not witness SOP$_2$ (but a variation of the formula still has SOP$_2$).
\end{abstract}

\section{Introduction}
Many model-theoretic properties can be presented by using a tree configuration. For example, a theory $T$ is simple if and only if $T$ does not have any formula having $k$-tree property for some $k>1$. That is, there are no formula $\varphi(x,y)$ and tuple $(c_\eta:\eta\in\null^{\omega>}\omega)$ such that $\{\varphi(x,c_{\eta^\frown n}):n<\omega\}$ is $k$-inconsistent for each $\eta\in\null^{\omega>}\omega$ and $\{\varphi(x,c_{\nu\lceil i}):i<\omega\}$ is consistent for each $\nu\in\null^{\omega}\omega$ [\ref{Kim}, Proposition 2.3.7].

The notion of tree property was extended to SOP$_1$ and SOP$_2$ by D{\v z}amonja and Shelah [\ref{DS}], and these properties are further studied by Shelah and Usvyatsov [\ref{SU}]. It is known that the implication
\begin{center}
    SOP$_2$ $\Rightarrow$ SOP$_1$ $\Rightarrow$ TP(non-simple)
\end{center}
holds for $T$ and even more, the second implication is proper \cite{DS}. But unfortunately, it is still an open question whether the first one is reversible. In this paper, we focus on this problem and point out that a theory $T$ must have some properties if it has a formula having SOP$_1$, but any conjunction of it does not have SOP$_2$.\par
We introduce one of the attempts to show whether SOP$_1$ and SOP$_2$ are equivalent or not. The following claim is from \cite{DS}, which gave the authors a strong motivation.
     \begin{claim}\label{claim}\cite[Claim 2.15]{DS}
     Suppose that $\varphi(x,y)$ satisfies SOP$_1$, but there is no $n\in\omega$ such that the formula $\varphi_n(x,y_0,...,y_{n-1})=\bigwedge_{k<n}\varphi(x,y_k)$ satisfies SOP$_2$. Then there are witnesses $\la a_\eta\ra_{\eta\in\tree}$ for $\varphi(x,y)$ satisfying SOP$_1$ which in addition satisfy\par
     \begin{enumerate}
         \item [(i)] if $X\subseteq \null^{\omega>}2$, and there are no $\eta, \nu\in X$ such that $\eta^\frown\langle0\rangle\unlhd\nu$, then $\{\varphi(x,a_\eta):\eta\in X\}$ is consistent,
         \item [(ii)] $\la a_\eta\ra_{\eta\in\tree}$ is 1-fbti.
     \end{enumerate}
     \end{claim}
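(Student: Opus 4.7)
The strategy is to start from an arbitrary SOP$_1$-witness for $\varphi$, first pass to an indiscernible version to obtain (ii), and then use the hypothesis that no $\varphi_n$ has SOP$_2$ to force (i).

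First, I would take any tree $\la b_\eta\ra_{\eta\in\tree}$ witnessing SOP$_1$ for $\varphi$. A standard tree Erd\H{o}s--Rado / modelling argument produces from it a tree $\la a_\eta\ra_{\eta\in\tree}$ that is $3$-fbti and realises the same Ehrenfeucht--Mostowski tree type. Since the SOP$_1$-configuration is expressible by conditions on the quantifier-free tree types of small finite sub-trees, it transfers to $\la a_\eta\ra_\eta$. This secures (ii), and throughout the rest of the argument I would work only with $3$-fbti trees so that (ii) is preserved.

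Next I would prove (i) by contradiction, deriving SOP$_2$ for some $\varphi_n$. Suppose some finite $X\subseteq\tree$ has no $\eta,\nu\in X$ satisfying $\eta\coc\lor\tri\nu$ (call such $X$ \emph{admissible}) yet $\{\varphi(x,a_\eta):\eta\in X\}$ is inconsistent; among all such counterexamples pick one of minimum size $n=|X|$. By $3$-fbti, inconsistency on $X$ is controlled by the quantifier-free tree type of the enumeration of $X$, so every embedded tree-isomorphic copy $X'\subseteq\tree$ of $X$ also has inconsistent $\varphi$-conjunction --- a universal ``bad shape.'' From such $X$ I would manufacture an SOP$_2$-witness for $\varphi_n$: for each $\rho\in\tree$ I would select an $n$-tuple of nodes $(\eta^\rho_1,\ldots,\eta^\rho_n)\in\tree^n$ so that for every branch $\nu\in\null^\omega 2$ the union $\bigcup_{i<\omega}\{\eta^{\nu\res i}_j:j\le n\}$ is admissible and avoids containing any copy of $X$ (so by minimality of $n$ and compactness the corresponding $\varphi_n$-conjunctions form a consistent set), while for any incomparable $\rho,\rho'\in\tree$ the combined set $\{\eta^\rho_j\}_j\cup\{\eta^{\rho'}_j\}_j$ \emph{does} contain a copy of $X$ (so the paired $\varphi_n$-conjunctions are inconsistent by the bad-shape property). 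Putting $\bar c_\rho:=(a_{\eta^\rho_1},\ldots,a_{\eta^\rho_n})$ then witnesses SOP$_2$ for $\varphi_n$, contradicting the hypothesis and proving (i).

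The main obstacle is the combinatorial layout just sketched: one must position the sub-tuples $(\eta^\rho_j)$ so that incomparable $\rho,\rho'\in\tree$ always generate an $X$-copy in their union while path-sections never do. The natural device is to exploit the $\lor/\llr$ asymmetry inherent in SOP$_1$: place the $X$-pattern attached to $\rho$ deep inside a distinguished cone (e.g.\ the $\llr$-cone) of $\rho$, so that moving to an incomparable $\rho'$ forces the two patterns to straddle a $\lor$-split and thereby recreate a configuration with the critical relation $\eta\coc\lor\tri\nu$, whereas descending along a single branch keeps successive patterns nested in one another's $\llr$-cones and so never reproduces an $X$-shape. Coordinating this arrangement with the precise tree-shape of $X$, and using $3$-fbti to transport the geometric configuration into genuine $\varphi$-(in)consistency, is the technical crux of the argument.
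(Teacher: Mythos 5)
You are attempting to prove a statement that the paper does not prove---it is quoted from \cite{DS} precisely in order to be refuted: immediately after stating it the authors write that the claim is wrong, even when condition (ii) is deleted, and Sections 4--6 develop a corrected substitute (Proposition \ref{modifi}) in which (i) is replaced by the SSOP$_1$ condition and (ii) by $\beta$-indiscernibility. The basic obstruction is that condition (i) is incompatible with $\la a_\eta\ra_{\eta\in\tree}$ being an SOP$_1$-witness at all: for any $\eta,\nu\in{}^{\omega>}2$ the two-element set $X=\{\eta^\frown\langle 1\rangle,\ \eta^\frown\langle 0\rangle^\frown\nu\}$ contains no pair $\sigma,\tau\in X$ with $\sigma^\frown\langle 0\rangle\unlhd\tau$, so (i) would force $\{\varphi(x,a_{\eta^\frown\langle 1\rangle}),\varphi(x,a_{\eta^\frown\langle 0\rangle^\frown\nu})\}$ to be consistent, while clause (ii) of the definition of SOP$_1$ forces exactly this pair to be inconsistent. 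Since the hypothesis of the claim is satisfiable---Section 6 builds a structure $\mathcal{C}$ in which $R(x,y)$ has ATP, hence SOP$_1$ (Proposition \ref{AT->SOP1}), while no conjunction $\bigwedge_{i<n}R(x,y_i)$ witnesses SOP$_2$---the claim is genuinely false, not vacuously true, and no proof can exist.

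Consequently your argument must break, and it does so at the two places you flag as routine. First, in your minimal-counterexample step the least ``admissible but inconsistent'' $X$ already has size $2$: it is the SOP$_1$-pair above, which is admissible in your sense for every SOP$_1$-witness. Your scheme would therefore have to convert an arbitrary SOP$_1$-witness into an SOP$_2$-witness for $\varphi_2$, i.e.\ prove that SOP$_1$ always yields SOP$_2$ for some conjunction---exactly what the hypothesis forbids and what the Section 6 example refutes; so the ``technical crux'' of placing copies of $X$ so that incomparable nodes always create a copy while branches never do is not a gap to be filled but an impossibility (compare Remark \ref{AT-x-SOP2}, where even re-indexing the parameters of an antichain tree can never produce the SOP$_2$ pattern). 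Second, the appeal to 3-fbti is too strong in two ways: the existence of a 3-fbti ($\beta$-indiscernible) SOP$_1$-witness is itself a delicate theorem (Section 3 of the paper, which repairs an incomplete argument of \cite{DS} via $\gamma$-indiscernibility), and $\beta$-indiscernibility only identifies tuples whose meet-closures are $\beta$-equivalent---a relation strictly finer than ``tree-isomorphic copy of $X$''---so inconsistency of one copy of a shape does not transfer to all copies. What can be salvaged is Proposition \ref{modifi}: under the same hypothesis there is a $\beta$-indiscernible witness of SSOP$_1$ (equivalently an antichain tree, Theorem \ref{antichaintree}), where consistency is required only of sets avoiding the SOP$_1$-pattern $\{\eta^\frown\langle1\rangle,\eta^\frown\langle0\rangle^\frown\nu\}$, not of every set avoiding the pattern $\sigma^\frown\langle0\rangle\unlhd\tau$.
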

The claim is not true, even if the last condition is deleted. For example, if we choose $X=\{\la0\ra, \la 1\ra\}$, then $X$ has no $\eta, \nu$ such that $\eta\coc\la0\ra\tri\nu$ but $\{\varphi(x,a_\eta):\eta\in X\}$ can not be consistent since $\la a_\eta\ra_{\eta\in\tree}$ witnesses SOP$_1$ with $\varphi(x,y)$. So there is no witness of SOP$_1$ satisfying (i).\footnote{After \cite{DS} was published, the authors of \cite{DS} released a modified version of \cite{DS} online. In \cite{DS2}, the new version of \cite{DS}, they introduce $3$-fbti, a notion of indiscernibility weaker than 1-fbti and rewrite \cite[Claim 2.15]{DS} using this as follows:
\begingroup
\addtolength\leftmargini{-0.18in}
\begin{quote}
\cite[Claim 2.19]{DS2}
     Suppose that $\varphi(x,y)$ satisfies SOP$_1$, but there is no $n\in\omega$ such that the formula $\varphi_n(x,y_0,...,y_{n-1})=\bigwedge_{k<n}\varphi(x,y_k)$ satisfies SOP$_2$. Then there are witnesses $\la a_\eta\ra_{\eta\in\tree}$ for $\varphi(x,y)$ satisfying SOP$_1$ which in addition satisfy\par
     \begin{enumerate}
         \item [(i)] if $X\subseteq \null^{\omega>}2$, and there are no $\eta, \nu\in X$ such that $\eta^\frown\langle0\rangle\unlhd\nu$, then $\{\varphi(x,a_\eta):\eta\in X\}$ is consistent,
         \item [(ii)] $\la a_\eta\ra_{\eta\in\tree}$ is 3-fbti.
     \end{enumerate}
\end{quote}
\endgroup
\noindent But the same problem still exists with this changed claim.}
However, we elaborate on the proof to investigate two regions in Shelah's classification program; theories having SOP$_2$ and theories having SOP$_1$ but not having SOP$_2$. \par
Here is the outline of this paper.

In Section 2, we recall {\it strongly indiscernible trees}, then observe that SOP$_2$, TP$_1$, and other related tree properties can be described uniformly by using the notions $A^{str}$-SOP$_2$ and $A^{str}$-TP$_1$(Definition \ref{A str}). With antichains in trees, these notions give us a more generalized version of SOP$_2$ and TP$_1$. More specifically, we show the following:
\begin{thmy}
 Let $1<k<\omega$ be given.
 \begin{enumerate} 
     \item For any antichain tuple $\bar{\nu}\subseteq\null^{\omega>}\omega$ of size $k$, $T$ has TP$_1$ if and only if $T$ has $\bar{\nu}^{str}$-TP$_1$.
     \item For any antichain tuple $\bar{\nu}\subseteq\null^{\omega>}2$ of size $k$, $T$ has SOP$_2$ if and only if $T$ has $\bar{\nu}^{str}$-SOP$_2$.
 \end{enumerate}
\end{thmy}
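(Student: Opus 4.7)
The plan is to prove both biconditionals by the same two-step schema. Write $T_\omega := \null^{\omega>}\omega$ for (1) and $T_2 := \tree$ for (2), these being the ambient index trees throughout.

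\textbf{Easy direction.} Suppose $T$ has TP$_1$ (resp.\ SOP$_2$), witnessed by a formula $\varphi(x,y)$ and a tree of parameters. Applying the modeling theorem for strongly indiscernible trees (the standard tool underlying the $A^{str}$-notation of Definition~\ref{A str}), I extract a strongly tree-indiscernible witness $(a_\eta)$ of the same formula. In such a tree the complete type of any tuple is determined by its strong tree type, so pairwise inconsistency of incomparable $\varphi$-instances propagates uniformly; in particular, every antichain of size $k$ is $2$-inconsistent, hence $k$-inconsistent, and this applies to every $\bar{\nu}$-copy. Branch consistency is preserved. Thus $(a_\eta)$ witnesses $\bar{\nu}^{str}$-TP$_1$ (resp.\ $\bar{\nu}^{str}$-SOP$_2$).

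\textbf{Hard direction.} For the converse, suppose $\varphi(x,y)$ and a strongly indiscernible $(a_\eta)$ witness $\bar{\nu}^{str}$-TP$_1$. Set $\psi(x, y_1, \dots, y_{k-1}) := \bigwedge_{i<k-1}\varphi(x, y_i)$, and construct an assignment $\sigma \mapsto \bar{\tau}(\sigma) = (\tau_1(\sigma), \dots, \tau_{k-1}(\sigma))$ from $T_\omega$ into $(k-1)$-tuples of old-tree nodes; put $b_\sigma := (a_{\tau_1(\sigma)}, \dots, a_{\tau_{k-1}(\sigma)})$. The assignment is designed so that (a) along each branch of $T_\omega$, the $\tau_i(\sigma)$'s lie on a single branch of the old tree, so that $\{\psi(x, b_\sigma)\}$ is consistent along the branch; and (b) for any two incomparable $\sigma, \sigma' \in T_\omega$, the union $\{\tau_i(\sigma)\}_i \cup \{\tau_j(\sigma')\}_j$ contains a $\bar{\nu}$-copy, so that $\psi(x, b_\sigma) \wedge \psi(x, b_{\sigma'})$ contains a $k$-inconsistent $\bar{\nu}$-antichain of $\varphi$-formulas and is therefore inconsistent. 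Together (a) and (b) make $(b_\sigma)$ and $\psi$ a TP$_1$-witness.

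The central obstacle is producing $\bar{\tau}$ satisfying (a) and (b) simultaneously. The idea is a recursion on the depth of $\sigma$: for each incomparable pair one splits the $k$ nodes of $\bar{\nu}$ between $\sigma$ and $\sigma'$ so that their union forms a full $\bar{\nu}$-copy, while along a branch the choices are nested so as to stay on a single old-branch. Strong indiscernibility of $(a_\eta)$ provides the freedom to realize many disjoint copies of $\bar{\nu}$ inside each subtree, giving room for the splitting. For part (2), the same construction is executed with $T_2$ replacing $T_\omega$: since $\bar{\nu} \in T_2$ and both trees are binary, one embeds the needed copies of $\bar{\nu}$ via a branch-preserving self-embedding of $T_2$. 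These two constructions together establish both biconditionals.
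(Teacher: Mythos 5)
The easy direction is fine (and in fact needs less than you use: any tuple strongly similar to $\bar{\nu}$ contains an incomparable pair, so $2$-inconsistency of incomparable pairs already gives what is wanted; no passage to an indiscernible witness is required). The hard direction, however, has a genuine gap: conditions (a) and (b) of your construction cannot be satisfied simultaneously once $k\geq 3$. Condition (a) forces each tuple $\bar{\tau}(\sigma)$ to lie on a single branch of the old tree, i.e.\ to be a chain --- and this is not a removable convenience, because Definition \ref{A str} guarantees consistency \emph{only} for chains of the old tree; for any other configuration a $\bar{\nu}^{str}$-TP$_1$ witness may be inconsistent (indeed an honest TP$_1$ witness, in which every incomparable pair is inconsistent, is in particular a $\bar{\nu}^{str}$-TP$_1$ witness, and there $\psi(x,b_\sigma)$ would itself be inconsistent if $\bar{\tau}(\sigma)$ were not a chain). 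But then for incomparable $\sigma,\sigma'$ the union $\{\tau_i(\sigma)\}_i\cup\{\tau_j(\sigma')\}_j$ is covered by two chains, so every antichain inside it has size at most $2$; it can never contain a tuple strongly similar to $\bar{\nu}$, which is an antichain of size $k$. So condition (b) fails for every $k\geq 3$, and no choice of the assignment $\bar{\tau}$, however clever, can repair this within your scheme. (For $k=2$ your scheme degenerates to the trivial case $\psi=\varphi$.)

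This is precisely the difficulty the paper circumvents by a different route: instead of a direct branch-preserving embedding, it proves Propositions \ref{char nsop2} and \ref{weak TP1 prop}, which say that if $T$ is NSOP$_2$ (resp.\ lacks weak TP$_1$), then in \emph{any} strongly indiscernible tree whose spine is consistent, all the fan-shaped sets $K_{\eta,m,\omega_1}$ must be consistent. Since for $m$ large these fans contain copies of $\bar{\nu}$, a strongly indiscernible $\bar{\nu}^{str}$-TP$_1$ (or $\bar{\nu}^{str}$-SOP$_2$) witness immediately contradicts this, giving TP$_1$, hence SOP$_2$. The real work is in proving those propositions: one assumes the conclusion fails, runs a transfinite recursion along $^{\omega_1>}2$ (resp.\ $^{\omega_1>}\omega$) choosing finite fan-sets $w_\eta$ via a minimality-of-$m_\eta$ argument (this is where consistency of the relevant conjunctions is secured, replacing your condition (a)), and then extracts a uniform conjunction $\psi$ witnessing SOP$_2$ (resp.\ weak TP$_1$) by the monochromatic subtree Lemmas \ref{monochrom lemma} and \ref{monochrom lemma 2}. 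Some argument of this indirect type (or at least a mechanism for obtaining consistency of non-chain configurations) is what your proposal is missing.
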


In Section 3, we recall the notions of $\alpha$-indiscernibility and $\beta$-indiscernibility\footnote{$\alpha$-indiscernibility and $\beta$-indiscernibility appear in \cite{DS2} under the name $1$-fbti and $3$-fbti, respectively. See \cite[Definition 2.10, 2.14]{DS2}.}, and show that $\beta$-indiscernibility preserves SOP$_1$. Namely,
\begin{thmx}
If $\varphi(x,y)$ witnesses SOP$_1$, then there exists a $\beta$-indiscernible tree $\la e_{\eta}\ra_{\eta\in\tree}$ which witnesses SOP$_1$ with $\varphi$.
\end{thmx}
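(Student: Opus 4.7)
The plan is to start from any SOP$_1$ witness $\langle a_\eta \rangle_{\eta\in\tree}$ for $\varphi(x,y)$ and extract a tree $\langle e_\eta \rangle_{\eta\in\tree}$ that is $\beta$-indiscernible and \emph{locally based} on $\langle a_\eta \rangle$ in the following sense: for every finite $(\eta_1,\ldots,\eta_n)\subseteq\tree$ there is $(\eta'_1,\ldots,\eta'_n)\subseteq\tree$ of the same $\beta$-tree-type with $\operatorname{tp}(e_{\eta_1},\ldots,e_{\eta_n})=\operatorname{tp}(a_{\eta'_1},\ldots,a_{\eta'_n})$. Such $\langle e_\eta \rangle$ are produced by the standard Ramsey/compactness template for tree-indexed sequences: colour each finite subtree of $\tree$ by its $\beta$-tree-type together with the $L$-type of the tuple of parameters it indexes, iterate Ramsey's theorem along the tree structure to thin out to a homogeneous cofinal subtree, and use compactness to stretch the result to a full $\tree$-indexed tree of the required form.

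Once $\langle e_\eta \rangle$ is in hand I would verify that the two SOP$_1$-witnessing conditions for $\varphi$ transfer. \emph{Consistency along a branch:} for any $\nu\in\null^\omega 2$ and any $i_1<\cdots<i_n$, local basing supplies a $\trianglelefteq$-increasing tuple $\eta'_1\trianglelefteq\cdots\trianglelefteq \eta'_n$ in $\tree$ with the same type as $(e_{\nu\lceil i_1},\ldots,e_{\nu\lceil i_n})$, so the consistency of $\{\varphi(x,a_{\eta'_j}):j\leq n\}$ transfers, and compactness gives consistency of the whole branch. \emph{Inconsistency across a split:} given $\eta^\frown\langle 0\rangle\trianglelefteq\nu$ and $\eta^\frown\langle 1\rangle\trianglelefteq\mu$, local basing yields $(\nu',\mu')$ of the same $\beta$-tree-type as $(\nu,\mu)$ — in particular with a common meet $\eta'$ and $\eta'{}^\frown\langle 0\rangle\trianglelefteq\nu'$, $\eta'{}^\frown\langle 1\rangle\trianglelefteq\mu'$ — and the inconsistency of $\{\varphi(x,a_{\nu'}),\varphi(x,a_{\mu'})\}$ transfers to $(e_\nu,e_\mu)$.

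The main obstacle is confirming that the $\beta$-tree-type is \emph{rich enough} to record (a) the $\trianglelefteq$-order of nodes on a single branch and (b) the meet of two incomparable nodes together with the left/right labels immediately above it; these are the two geometric invariants on which the SOP$_1$-witness depends. The whole point of $\beta$-indiscernibility (the $3$-fbti notion of [\ref{DS}]) is precisely to retain such 2- and 3-node meet-configurations, so once its definition is unpacked the verification is essentially bookkeeping. A secondary care is to include the full quantifier-free tree-type in the Ramsey colouring, ensuring that split pairs are never inadvertently identified with comparable pairs during the extraction — otherwise the SOP$_1$ splitting inconsistency could be destroyed at the Ramsey step.
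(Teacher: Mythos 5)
There is a genuine gap, and it sits exactly where you wave at ``the standard Ramsey/compactness template'': the existence of a $\beta$-indiscernible tree locally based on $\la a_\eta\ra_{\eta\in\tree}$ \emph{with respect to $\beta$-types} is not a known modeling property, and it is essentially the whole content of the theorem. The modeling properties available are for strongly indiscernible trees in the language $\{\lhd,\wedge,<_{lex}\}$ (Takeuchi--Tsuboi) and for $\alpha$-indiscernibility (Fact \ref{a}). Your colouring must remember clause (iv) of $\beta$-equivalence, i.e.\ the relation $\nu=\eta\coc\llr$, and this is a successor-type relation that is \emph{not} preserved when passing to a subtree: after thinning to a ``homogeneous cofinal subtree'', no surviving node is the exact child $\eta\coc\llr$ of another surviving node, so the configurations in which the original witness records the SOP$_1$-inconsistency (one formula sitting at precisely the right child of the meet) are destroyed rather than homogenized. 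If you weaken (iv) to the $\alpha$/strong data that Ramsey-type extraction does preserve (only ``$\eta\coc\llr\tri\nu$''), then basing forces inconsistency for \emph{all} incomparable pairs of the same strong type, i.e.\ you end up producing a witness of SOP$_2$, not of SOP$_1$ --- this is why the paper notes that strong indiscernibility is unusable here, and it is the same failure mode as in the erroneous Claim 2.19 of \cite{DS} that the theorem is correcting. Your transfer arguments (branch consistency, split inconsistency) are fine \emph{given} such a basing, but they do not address the existence problem.

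The paper's proof circumvents this by never asking an extraction principle to respect (iv). It first internalizes the child data into the parameters by setting $b_\eta=a_{\eta\coc\lor}\coc a_{\eta\coc\llr}$, applies the known modeling property of $\alpha$-indiscernibility to $\la b_\eta\ra$, and unfolds the result into a tree $\la d_\eta\ra$; to prove that unfolding preserves indiscernibility one needs to know which coordinate ($c^0$ or $c^1$) each node uses, i.e.\ the last letters $t(\sigma_i)$ and $t(\tau_i)$ must match, which is exactly why the stronger $\gamma$-equivalence is introduced (Lemma \ref{gamma}) instead of aiming at $\beta$ directly. Finally, $\beta$-indiscernibility is obtained by composing with the embedding $h$ that appends $\la 01\ra$ for $0$ and $\la 1\ra$ for $1$, whose image avoids nodes with $t=0$ so that the extra $\gamma$-clauses become vacuous (Theorem \ref{beta}). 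If you want to salvage your outline, you would have to either prove a modeling property for the $\beta$-indexed language (unknown, and obstructed by the subtree issue above) or reproduce some version of this pairing-and-unfolding detour.
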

\noindent We will apply it to further problems. The outline of the proof of Theorem \ref{beta} is taken from [\ref{DS2}], a revised version of \cite{DS}. However, the proof of \cite{DS2} is missing an important part, so it is difficult to say that the desired conclusion has been fully proved. We make up for the missing parts and include complete proof.

In Section 4, we introduce a notion of tree property, which is called {\it antichain tree property (ATP)}, and investigate the relationship between ATP and other tree properties. In particular, we show that ATP implies SOP$_1$ and TP$_2$, so we have the following diagram:
\begin{center}
\begin{tikzcd}
 & {\rm NIP}  \arrow{r} & {\rm NTP}_2\arrow{r} & {\rm NATP} \\ 
& {\rm stable} \arrow{r} \arrow{u} & {\rm simple}\arrow{r} \arrow{u} & {\rm NSOP}_1 \arrow{u} .
\end{tikzcd}
\end{center}

\noindent Another main result of Section 4 is below.

\begin{corx}
If there exists a formula having SOP$_1$ but any conjunction of it does not have SOP$_2$, then the theory has ATP. The witness of ATP can be selected to be strongly indiscernible.
\end{corx}
So if there exists an NSOP$_2$ theory having ATP, then $\sopl\supsetneq\sopz$. On the contrary, if the existence of an antichain tree always implies the existence of witness of SOP$_2$, then $\sopl=\sopz$.

 In Section 5, we introduce {\it 1 strong order property with full consistency} (we will write it SOP$^{\rm fc}_1$). By definition, if a theory has SOP$^{\rm fc}_1$, then it has SOP$_1$. We show that $\beta$-indiscernibility also preserves SOP$^{\rm fc}_1$ and prove that a formula has ATP if and only if it has SOP$^{\rm fc}_1$. Thus the class of theories having SOP$^{\rm fc}_1$ is a proper subclass of the class of SOP$_1$ theories. 
 
 As a modification of Claim \ref{claim}, we observe that the following proposition holds.

\begin{propx}
Suppose that $\varphi(x,y)$ satisfies SOP$_1$, but there is no $n\in\omega$ such that the formula $\varphi_n(x,y_0,...,y_{n-1})=\bigwedge_{k<n}\varphi(x,y_k)$ satisfies SOP$_2$. Then there are witnesses $\la a_\eta\ra_{\eta\in\tree}$ for $\varphi(x,y)$ satisfying SOP$_1$ which in addition satisfy
\begin{enumerate}
\item [(i)] $\la a_\eta\ra_{\eta\in\tree}$ witnesses SOP$^{\rm fc}_1$ with $\varphi$,
\item [(ii)] $\la a_\eta\ra_{\eta\in\tree}$ is $\beta$-indiscernible.
\end{enumerate}
\end{propx}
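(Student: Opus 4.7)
The plan is to produce the desired tree in two layers. First, extract from any SOP$_1$-witness a $\beta$-indiscernible tree that still witnesses SOP$_1$, which takes care of (ii). Second, argue by contradiction that the hypothesis ruling out SOP$_2$ in every finite conjunction $\varphi_n$ forces this same tree to witness SSOP$_1$ automatically, which takes care of (i).

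For (ii), I would start with any witness $\la c_\eta\ra_{\eta\in\tree}$ of SOP$_1$ for $\varphi$ and apply Theorem \ref{beta} directly to obtain a $\beta$-indiscernible tree $\la a_\eta\ra_{\eta\in\tree}$ still witnessing SOP$_1$ with $\varphi$. Because $\beta$-indiscernibility preserves the SOP$_1$ pattern, this delivers (ii) along with the branch-consistency and the inconsistency at the SOP$_1$-critical pairs.

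For (i), assume for contradiction that $\la a_\eta\ra$ fails to witness SSOP$_1$. By the Section 5 definition, there is some finite SSOP$_1$-admissible configuration $X\subseteq\tree$ --- the kind of configuration required to be consistent by SSOP$_1$ --- such that $\{\varphi(x,a_\eta):\eta\in X\}$ is inconsistent. Put $n:=|X|$. By $\beta$-indiscernibility this inconsistency transports to every $\beta$-equivalent copy of $X$ inside $\tree$, giving a uniform supply of inconsistent $n$-element configurations of the same shape. From this supply I would assemble a new tree of $n$-tuples $\la b_\mu\ra_{\mu\in\tree}$ by embedding $\tree$ into the original tree so that: (a) each $b_\mu$ packages one such copy of $X$; (b) whenever $\mu\perp\mu'$, the combined packages contain a $\beta$-copy of the inconsistent pattern, forcing $\{\varphi_n(x,b_\mu),\varphi_n(x,b_{\mu'})\}$ to be inconsistent; and (c) along any branch $\nu$ of the new tree the packages sit along a branch of the original tree, so that $\{\varphi_n(x,b_{\nu|i}):i<\omega\}$ stays consistent via the original SOP$_1$-branch-consistency. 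This produces a SOP$_2$-witness for $\varphi_n$, contradicting the hypothesis.

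The principal obstacle is exactly this combinatorial embedding of the new tree into the old one: incomparability in the new tree must always induce an inconsistent $\beta$-copy of $X$ across the two nodes, while comparability must always induce a branch pattern. This is the point at which the original argument in [\ref{DS}] breaks down, and the $\beta$-indiscernibility introduced in Section 3 (strictly stronger than the 3-fbti used in [\ref{DS}]) is designed precisely to provide both the uniform replication of inconsistent copies and the compatible branch-alignment needed for SOP$_2$ on $\varphi_n$.
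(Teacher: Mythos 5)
There is a genuine gap, and it sits exactly at the step you yourself flag as the ``principal obstacle.'' Your first step (apply Theorem \ref{beta} to get a $\beta$-indiscernible SOP$_1$-witness) is fine, but your second step asserts a reduction that neither you nor the paper proves: that if a $\beta$-indiscernible SOP$_1$-witness fails SSOP$_1$, then the single inconsistent admissible configuration $X$ can be replicated into an SOP$_2$-witness for $\varphi_{|X|}$. Your item (c) is where the sketch breaks: each package $b_\mu$ is a copy of $X$, and $X$ cannot be linearly ordered by $\tri$ (a chain is automatically consistent by the SOP$_1$ branch condition), so the packages along a branch of the new tree do \emph{not} sit inside a branch of the old tree, and SOP$_1$ gives no consistency information whatsoever about unions of non-chain, non-forbidden configurations --- that indeterminacy is precisely Remark \ref{comparing sop1sop2ssop1}(i)c. $\beta$-indiscernibility transports the type of any tuple to its $\beta$-equivalent copies, but it does not manufacture the needed branch-consistency, nor does it by itself guarantee that every incomparable pair of packages contains a $\beta$-copy of $X$ while every comparable pair avoids one. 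This is essentially the same leap that makes [\ref{DS}, Claim 2.19] (Claim \ref{claim}) fail, so the contradiction you aim for is not obtained.

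The paper's route is structurally different and is what makes the statement provable: it does not show that the tree produced by Theorem \ref{beta} itself witnesses SSOP$_1$; instead it proves Theorem \ref{antichaintree}, where the hypothetical failure is organized through a transfinite construction of pairs $\la w_\eta,u_\eta\ra$ over $^{\omega_1>}2$ whose conditions (a)--(ix) are engineered so that branch-unions stay consistent (condition (ix)) while only the two immediate-successor blocks $w_{\eta\coc\lor}\cup w_{\eta\coc\llr}$ are inconsistent, with the inconsistent blocks being the specially shaped sets $M_i^k(\cdot)$ rather than an arbitrary $X$; homogenization then needs the coloring Lemma \ref{monochrom lemma} (to fix one finite $q$ and one shape $M_i^k$) and the $\beta$-equivalence transfer of Remark \ref{remark47}, and the resulting SOP$_2$ formula is a conjunction of length $|q|+|M_i^k|$, not $|X|$. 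Only after the antichain tree exists does one get SSOP$_1$ by the reindexing of Proposition \ref{AT->fSOP1}, and then $\beta$-indiscernibility by Proposition \ref{beta preserves ssop1}. If you want to salvage your outline, you must either reproduce that bookkeeping (choosing the inconsistent configurations minimally so that branch-consistency is maintained by construction) or find a new argument for why a single failure of SSOP$_1$ in a $\beta$-indiscernible tree already yields SOP$_2$ for a conjunction; as written, that implication is unsupported.
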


In Section 6, we construct an example of theory having ATP. Our example is a structure $\mathcal{C}$ in relational language $\mathcal{L}=\{R\}$ where $R$ is a binary relation symbol. We show that there exists a formula $\varphi(x,y)$ having ATP (so it has SOP$_1$), and $\bigwedge_{i<n}\varphi(x,y_i)$ does not witness SOP$_2$ modulo ${\rm Th}(\mathcal{C})$, for any $n\in\omega$.

\medskip

We use standard notations for trees.
For ordinals $\kappa$ and $\lambda$, we write $^{<\lambda}\kappa$ for the tree of height $\lambda$ and $\kappa$ many branches. In other words, $^{<\lambda}\kappa$ is ${\bigcup_{\lambda'<\lambda}}(^{\lambda'}\kappa)$ where ${^{\lambda'}}\kappa$ is the set of all functions from $\lambda'$ to $\kappa$. We denote $\unlhd$ to be a natural partial order in the tree, that is, $\eta \unlhd \nu$ if $\eta=\nu$ or $\nu \lceil \alpha = \eta$ for some ordinal $\alpha \in$ dom($\nu)$. Note that $\eta\tri\nu$ if and only if $\eta\subseteq\nu$.

We may use $\langle\rangle$ and $\emptyset$ as an empty string, $0^\alpha$ as a string of $\alpha$ many zeros, $1^\alpha$ as a string of $\alpha$ many ones, and $\alpha$ as a string $\langle\alpha\rangle$ of length one, because each element in tree can be considered as a string. $\emptyset$ may be interpreted as an empty set or an empty string depending on the context.

In addition, we give three more notations for trees.
First, we say $\xi = \eta \wedge \nu$ if $\xi$ is the meet of $\eta$ and $\nu$, i.e., $\xi=\eta \lceil \beta$, when $\beta = \bigcup \{ \alpha \leq $ dom$(\eta) \cap $dom$(\nu) : \eta \lceil \alpha=\nu \lceil \alpha \}$. For $\bar{\eta}\subseteq\null^{<\lambda}\kappa$, $\bar{\nu}$ is the meet closure of $\bar{\eta}$ if $\bar{\nu}=\{ \eta_1 \wedge \eta_2 : \eta_1, \eta_2 \in \bar{\eta} \}$. 
Second, we say $\eta<_{lex}\nu$ if $\eta \lhd \nu$, or $\eta$ and $\nu$ are $\unlhd$-incomparable($\eta \perp \nu$) and for ordinal $\alpha=$ dom$(\eta\wedge\nu),\,  \eta(\alpha)<\nu(\alpha)$. Finally, we denote len$(\eta)$ and $l(\eta)$ to be the length of $\eta$.

\section{Strongly indiscernible trees and SOP$_2$}\label{SOP2}

First we recall the definitions and facts for trees and tree indiscernibilities in \cite{DS}, \cite{KK}, \cite{KKS}, \cite{Sco15}, and \cite{TT}.\par

\begin{defn}
The {\it strong language} $\mathcal{L}_0$ is defined to be the collection $\{ \lhd, \wedge, <_{lex} \}$, where $\lhd$, $<_{lex}$ are binary relation symbols, $\wedge$ is a binary function symbol.
  \end{defn}

  \begin{defn}
Let $\kappa$ and $\lambda$ be ordinals. Then we can give an $\L_0$-structure on $\null^{<\lambda}\kappa$ by interpreting each symbol as in the last paragraph of Section 1.  Let $\L$ be a language, $T$ a complete $\L$-theory, and $\mathcal{M}$ a monster models of $T$. For a tree indexed set $\la b_\eta : \eta \in \null^{<\lambda}\kappa\ra$ in $\mathcal{M}$, we say it is {\it strongly indiscernible} if for any finite tuples $\bar{\eta}$ and $\bar{\nu}$ with the same quantifier-free $\mathcal{L}_0$-type in $^{<\lambda}\kappa$, $\la b_\eta\ra_{\eta\in \bar{\eta}} \equiv \la b_\nu\ra_{\nu\in \bar{\nu}}$. Generally, for a given structure $\mathcal{I}$ with language $L_{\mathcal{I}}$, we say a set $\{b_i : i \in \mathcal{I}\}$ is {\it $\mathcal{I}$-indexed indiscernible} if $\la b_i\ra_{i\in \bar{i}} \equiv \la b_j\ra_{j\in \bar{j}}$ for any finite $\bar{i}$, $\bar{j}\subseteq\mathcal{I}$ with the same quantifier-free $\L_\mathcal{I}$-type.
\end{defn}
 We say $\bar{\eta}$ is strongly similar to $\bar{\nu}$ ($\bar{\eta}\sim_{str}\bar{\nu}$), if they have the same quantifier-free type in $\L_0$-structure.

\begin{defn}
 Let $\L$ and $\L'$ be languages, $T$ an $\L$-theory, $\mathcal{M}$ a monster model of $T$, and $\mathcal{I}$ an $\L'$-structure. We say $B=\{ b_\eta\in\mathcal{M} : \eta \in \mathcal{I} \}$ is {\it locally based on} a set $A=\{ a_\eta\in\mathcal{M} : \eta \in \mathcal{I} \}$  if for all $\varphi(x_{i_1}, \dots , x_{i_n})$ in $\mathcal{L}$ and for all $\eta_1, \dots , \eta_n \in \mathcal{I}$, there are some $\nu_1, \cdots , \nu_n \in \mathcal{I}$ such that \par
  \begin{itemize}
     \item[(a)] $\nu_1 \dots \nu_n$ and $\eta_1 \dots \eta_n$ have the same quantifier-free $\L'$-type in $\mathcal{I}$, and
     \item[(b)] $b_{\eta_1} \dots b_{\eta_n}\equiv_\varphi a_{\nu_1} \dots a_{\nu_n}$
  \end{itemize}
    In particular, when $\mathcal{I}$ is $\L_0$-structure $\null^{<\lambda}\kappa$, we say $B$ is {\it strongly locally based on} $A$ whenever $B$ is locally based on $A$.
\end{defn}

\begin{defn}
For an index structure $\mathcal{I}$, we say $\mathcal{I}$-indexed indiscernibles have the {\it modeling property} if for any set of parameters $A=\{ a_\eta : \eta \in \mathcal{I} \}$, there is an $\mathcal{I}$-indexed indiscernible $B=\{ b_\eta : \eta \in \mathcal{I} \}$ such that $B$ is locally based on $A$.
    \end{defn}

\begin{fact}\label{modeling} \cite{TT}\cite{Sco15}
Let $^{<\omega}\omega$ be the universe of the index structure. Then the strong indiscernibles have the modeling property.
\end{fact}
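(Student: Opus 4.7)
The plan is to combine a tree Ramsey theorem of Milliken type with a compactness argument. As a preliminary observation, the quantifier-free $L_0$-type of a finite tuple $\bar{\eta}\in {}^{<\omega}\omega$ is entirely determined by the finite $L_0$-substructure generated by $\bar{\eta}$ (equivalently, by the meet-closure of $\bar{\eta}$ together with its lexicographic ordering and the level-function data encoded by $\wedge$); hence for each $n$ there are only finitely many strong-similarity classes on $n$-tuples. This reduces the desired conclusion to the following statement: for each formula $\varphi(x_{i_1},\dots,x_{i_n})$ and each qf-$L_0$-type $p$, the value of $\operatorname{tp}_\varphi(\bar{b}_{\bar{\eta}})$ should depend only on $p$, not on the particular $\bar{\eta}$ realising $p$.

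Next I would set up the relevant partial type. Introduce fresh constants $(c_\eta)_{\eta\in {}^{<\omega}\omega}$ and let $\Sigma$ consist of: (i) an \emph{indiscernibility schema}, asserting for every pair $\bar{\eta}\sim_{\mathrm{str}}\bar{\eta}'$ and every formula $\varphi$ that $\bar{c}_{\bar{\eta}}\equiv_\varphi \bar{c}_{\bar{\eta}'}$; and (ii) a \emph{based-on schema}, asserting for every finite $\Phi$ and every $\bar{\eta}$ that some $\bar{\nu}\sim_{\mathrm{str}}\bar{\eta}$ satisfies $\bar{c}_{\bar{\eta}}\equiv_\Phi \bar{a}_{\bar{\nu}}$. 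A realisation of $\Sigma$ produces the desired $B$ based on $A$, so by compactness it suffices to realise an arbitrary finite fragment, which involves only a finite subtree $F\subseteq {}^{<\omega}\omega$ and a finite set $\Phi$ of formulas.

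The key step is to handle such a finite fragment via Milliken's theorem on strong subtrees: any finite colouring of the strong copies of a fixed finite tree $F$ inside ${}^{<\omega}\omega$ is constant on the strong copies of $F$ lying in some infinite strong subtree $T' \subseteq {}^{<\omega}\omega$. I would colour each strong copy $F'$ of $F$ by $\operatorname{tp}_\Phi(\bar{a}_{F'})$, and iterate this Ramsey extraction finitely many times, once per $\sim_{\mathrm{str}}$-class appearing in $F$, to obtain an infinite strong subtree on which every strong-similarity pattern of tuples from $F$ is simultaneously $\Phi$-homogeneous. Fixing a strong embedding $T'\cong {}^{<\omega}\omega$ and pulling back the corresponding parameters along it produces $(b_\eta)_{\eta\in F}$ realising the finite fragment of $\Sigma$.

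The main obstacle, and the place deserving the most care, is matching the Ramsey data precisely to the strong language $L_0=\{\lhd,\wedge,<_{\mathrm{lex}}\}$: one must verify that Milliken's notion of strong subtree (level-preserving on a fixed infinite set of levels, meet-preserving, and $<_{\mathrm{lex}}$-preserving) realises exactly the qf-$L_0$-types, so that a monochromatic strong subtree actually delivers $L_0$-indiscernibility rather than mere partial homogeneity. Once this dictionary between Milliken-strong subtrees and $\sim_{\mathrm{str}}$-classes is in place, the iterated Ramsey extraction plus compactness closes the argument and produces the strongly indiscernible tree $B=\{b_\eta\}$ based on $A$.
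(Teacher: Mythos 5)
You should first be aware that the paper does not prove this statement: it is imported verbatim as a Fact with the citation [TT, Theorem 16], so the only comparison available is with the partition-theorem proofs in that literature, and your Ramsey-plus-compactness outline is indeed the standard route. However, as written your argument has two genuine gaps. The first is the application of Milliken's theorem directly to ${}^{<\omega}\omega$. Milliken's theorem is a statement about finitely branching trees, and a strong subtree of ${}^{<\omega}\omega$ is itself infinitely branching, so ``strong copies of a fixed finite tree $F$ inside ${}^{<\omega}\omega$'' is not an object the theorem colours; worse, the infinitary statement you invoke is false in the form you need it. Colour a comparable pair $\eta\lhd\nu$ by whether $\nu(l(\eta))>l(\eta)$: if $T'$ is any strong subtree representing, above each of its nodes $t$, all $\omega$ immediate successor directions, then among the next-level nodes $s_n\supseteq t^\frown\langle n\rangle$ of $T'$ both colours occur (colour $0$ for $n\leq l(t)$, colour $1$ for $n>l(t)$), so no infinite strong subtree is homogeneous even for copies of a $2$-chain. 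The standard repair is to run the partition argument inside the finitely branching trees ${}^{<\omega}k$, noting that every finite $\sim_{str}$-configuration of ${}^{<\omega}\omega$ is realised in ${}^{<\omega}k$ for $k$ large, and to let your compactness step range over $k$ as well as over $\Phi$ and $F$.

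The second gap is the ``dictionary'' you yourself flag as the main obstacle: it does not just need verification, it fails as stated, and repairing it is the real content. Milliken-strong embeddings preserve relative levels (and successor indices), whereas the quantifier-free type in $\{\lhd,\wedge,<_{lex}\}$ sees no level information: for instance $(\langle 0\rangle,\langle 1\rangle)$ and $(\langle 0,0\rangle,\langle 1\rangle)$ are $\sim_{str}$-equivalent but are not related by any strong embedding. Consequently the iterated Milliken extraction only produces a tree homogeneous for the finer, level-aware similarity, and two $\sim_{str}$-equivalent tuples with different length-orderings may still have different $\Phi$-types, so your finite fragments of $\Sigma$ are not yet realised. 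Nor can you fix this by passing to a sub-copy of the whole tree on which the $L_0$-type determines the level-ordering: no such copy of ${}^{<\omega}\omega$ exists (if lex-larger incomparable nodes were always longer, every node above the image of $\langle 0\rangle$ would have to be shorter than the image of $\langle 1\rangle$, which is absurd). What does close the argument is an additional, fragment-by-fragment collapse: given a finite meet-closed $F$, re-embed it into the level-aware indiscernible tree in ``staircase'' position, with lengths strictly increasing along the linear order placing $\eta$ before $\nu$ iff $\eta\unlhd\nu$, or $\eta\perp\nu$ and $\eta<_{lex}\nu$. Since that order is computed from the $L_0$-data alone, $\sim_{str}$-equivalent tuples from $F$ then become equivalent for the finer similarity, their $\Phi$-types agree, and the based-on requirement transfers because finer similarity implies $\sim_{str}$. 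With these two repairs your outline becomes essentially the argument the paper imports from [TT]; without them the proof does not go through.
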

 The fact says that given a set $A=\{ a_\eta : \eta \in \!\null^{<\omega}\omega \}$, we always have $B=\{ b_\eta : \eta \in \null^{<\omega}\omega \}$ which is strongly indiscernible and strongly locally based on $A$.\par

\bigskip
Now we recall the definition of 2-strong order property and its related notions from \cite{KK}.
\begin{defn}
 Let $\varphi(x,y)$ be a formula in $\mathcal{L}$. Let $1<k<\omega$ be given
 \begin{enumerate}
     \item We say $\varphi(x,y)$ has {\it 2-strong order property} (SOP$_2$) if there is a tree $\la a_{\eta}\ra_{\eta \in \null^{<\omega}2}$ such that
       \begin{enumerate}
        \item[(1)] for all $\eta \in \null^{\omega}2$, $\{ \varphi (x,a_{\eta \lceil \alpha}) : \alpha < \omega \}$ is consistent, and
        \item[(2)] for all $\eta \perp \nu \in \null^{<\omega}2$, $\{ \varphi (x,a_{\eta}), \varphi (x,a_{\nu}) \}$ is inconsistent.
       \end{enumerate}
     \item We say $\varphi(x,y)$ has {\it tree property of the first kind} (TP$_1$) if there is a tree $\la a_{\eta}\ra_{\eta \in \null^{<\omega}\omega}$ such that
       \begin{enumerate}
        \item[(1)] for all $\eta \in \null^{\omega}\omega$, $\{ \varphi (x,a_{\eta \lceil \alpha}) : \alpha < \omega \}$ is consistent, and
        \item[(2)] for all $\eta \perp \nu \in \null^{<\omega}\omega$, $\{ \varphi (x,a_{\eta}), \varphi (x,a_{\nu}) \}$ is inconsistent.
       \end{enumerate}
      \item We say $\varphi(x,y)$ has $k$-TP$_1$ if there is a tree $\la a_{\eta}\ra_{\eta \in \null^{<\omega}\omega}$ such that
       \begin{enumerate}
        \item[(1)] for all $\eta \in \null^{\omega}\omega$, $\{ \varphi (x,a_{\eta \lceil \alpha}) : \alpha < \omega \}$ is consistent, and
        \item[(2)] for any pairwise incomparable $\eta_0, \ldots, \eta_{k-1} \in \null^{<\omega}\omega$, $\{ \varphi (x,a_{\eta_i}):i<k \}$ is inconsistent.
       \end{enumerate}
      \item We say $\varphi(x,y)$ has weak $k$-TP$_1$ if there is a tree $\la a_{\eta}\ra_{\eta \in \null^{<\omega}\omega}$ such that
       \begin{enumerate}
        \item[(1)] for all $\eta \in \null^{\omega}\omega$, $\{ \varphi (x,a_{\eta \lceil \alpha}) :\alpha < \omega \}$ is consistent, and
        \item[(2)] for any $\eta, \eta_0, \ldots, \eta_{k-1} \in \null^{<\omega}\omega$ and $i_0<\cdots<i_{k-1}<\omega$, if $\eta^\frown\la i_l\ra\unlhd\eta_i$ for each $l<k$, then $\{ \varphi (x,a_{\eta_i}):i<k \}$ is inconsistent.
       \end{enumerate}
 \end{enumerate}
\end{defn}
We say $T$ has SOP$_2$ if it has a SOP$_2$ formula. Otherwise, we say $T$ is NSOP$_2$. Similarly, we say $T$ has TP$_1$, $k$-TP$_1$, or weak $k$-TP$_1$ if it has a formula with that property. If $T$ has weak $k$-TP$_1$ for some $k$, then we say $T$ has {\it weak TP$_1$.} Note that if $T$ has TP$_1$, then it has $k-$TP$_1$ for some $k$ by definitions. It is also clear that if $T$ has $k-$TP$_1$ for some $k$, then it has weak $k-$TP$_1$. In fact, these three notions are equivalent by the following observation of Chernikov and Ramsey \cite{CR}.

\begin{fact}\cite[Theorem 4.8]{CR}\label{fact:weak TP1 TP1}
Given $k\ge 2$, $T$ has weak $k-$TP$_1$ if and only if $T$ has TP$_1$.
\end{fact}

 Note that in the definition of TP$_1$, $k$-TP$_1$, or weak $k$-TP$_1$, we may assume the tree $\la a_{\eta}\ra_{\eta \in \null^{<\omega}\omega}$ is strongly indiscernible by Fact \ref{modeling}. On the other hand, since SOP$_2$ deals with the binary tree $^{<\omega}2$ as an index set, we can not directly apply \ref{modeling} into it. The following fact, however, says that it is possible for SOP$_2$ to assume $\la a_{\eta}\ra_{\eta \in \null^{<\omega}2}$ is strongly indiscernible, too.
\begin{lemma}\label{str indisc tree on SOP2} \cite[Lemma 4.3 (3)]{CR}
For any formula $\varphi(x,y)$ having SOP$_2$, there is a strongly indiscernible tree $\la a_{\eta}\ra_{\eta \in \null^{<\omega}2}$ witnessing the same property.
\end{lemma}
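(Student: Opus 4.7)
The plan is to bridge between $\null^{<\omega}2$ and $\null^{<\omega}\omega$ so as to invoke Fact \ref{modeling}, which is only available on the latter tree. First I would define a map $\phi\colon \null^{<\omega}\omega \to \null^{<\omega}2$ recursively by $\phi(\la\ra) = \la\ra$ and $\phi(\eta\coc\la n\ra) = \phi(\eta)\coc 1^n\coc\la 0\ra$, then verify by a short induction that $\phi$ preserves $\lhd$, $\perp$, and $<_{lex}$. Note $\phi$ will fail to preserve $\wedge$ on the nose, but this turns out to be harmless.

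Given a SOP$_2$ witness $\la a_\eta:\eta\in\null^{<\omega}2\ra$ for $\varphi$, I would pull it back by setting $\tilde a_\nu := a_{\phi(\nu)}$ for $\nu\in\null^{<\omega}\omega$ and apply Fact \ref{modeling} to obtain a strongly indiscernible tree $\la b_\nu:\nu\in\null^{<\omega}\omega\ra$ strongly based on $\la\tilde a_\nu\ra$. Restricting to the binary sub-tree, $\la b_\eta:\eta\in\null^{<\omega}2\ra$ remains strongly indiscernible, because $\null^{<\omega}2$ is an $L_0$-substructure of $\null^{<\omega}\omega$, so the quantifier-free $L_0$-types of tuples from $\null^{<\omega}2$ agree whether computed inside $\null^{<\omega}2$ or inside $\null^{<\omega}\omega$.

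It would then remain to verify that this restriction witnesses SOP$_2$. For inconsistency along an antichain pair $\eta\perp\nu$ in $\null^{<\omega}2$, I would apply basedness to the $\mathcal{L}$-formula $\psi(x_1,x_2) := \exists y(\varphi(y,x_1)\wedge\varphi(y,x_2))$, obtaining $\nu_1\perp\nu_2\in\null^{<\omega}\omega$ with the same qftp as $(\eta,\nu)$ such that $b_\eta b_\nu \equiv_\psi \tilde a_{\nu_1}\tilde a_{\nu_2}$. Since $\phi$ preserves $\perp$, the images $\phi(\nu_1),\phi(\nu_2)\in\null^{<\omega}2$ are incomparable, so the original SOP$_2$ gives $\neg\psi(\tilde a_{\nu_1},\tilde a_{\nu_2})$, hence $\neg\psi(b_\eta,b_\nu)$. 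For consistency along a branch $\eta\in\null^{\omega}2$, compactness reduces to finite consistency, and for each $n$ basedness applied to $\psi_n(x_0,\ldots,x_{n-1}) := \exists y\bigwedge_{i<n}\varphi(y,x_i)$ returns a chain $\nu_0\lhd\cdots\lhd\nu_{n-1}\in\null^{<\omega}\omega$ with matching $\psi_n$-type; since $\phi$ preserves $\lhd$, the images $\phi(\nu_0)\lhd\cdots\lhd\phi(\nu_{n-1})$ lie along some branch of $\null^{<\omega}2$, so consistency follows from the original witness.

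The main conceptual hurdle is the design of $\phi$: no $L_0$-embedding of $\null^{<\omega}\omega$ into $\null^{<\omega}2$ can preserve $\wedge$, since a binary tree admits no node with infinitely many pairwise incomparable children. Fortunately the SOP$_2$ verification only calls upon preservation of $\lhd$ and $\perp$ (with $<_{lex}$ entering only through the strong language required by Fact \ref{modeling}), so the slightly lossy $\phi$ above is sufficient.
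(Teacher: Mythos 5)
Your argument is correct, and it is essentially the standard proof of the cited result: the paper gives no proof of Lemma \ref{str indisc tree on SOP2} beyond the reference to \cite{CR}, and the usual argument is exactly your combination of the modeling property (Fact \ref{modeling}) with an encoding of ${}^{<\omega}\omega$ into ${}^{<\omega}2$ (here $n\mapsto 1^n{}^\frown\langle 0\rangle$) preserving $\lhd$ and $\perp$, followed by restriction to the binary subtree, which remains strongly indiscernible because ${}^{<\omega}2$ is a $\wedge$-closed $L_0$-substructure of ${}^{<\omega}\omega$. Your transfer of consistency and inconsistency via the existential formulas $\psi$ and $\psi_n$ in the basedness clause, and your remark that preservation of meets is neither possible nor needed, are both accurate.
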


If we apply the concept of strong indiscernibility and antichains, these tree properties can be classified in a more uniform way.
 
\begin{defn}
A subset $A\subseteq\null^{\kappa>}\lambda$ is called an antichain if for all $\eta, \nu\in A$, $\eta\perp\nu$.
\end{defn}
Similarly, we say a tuple $\bar{\eta}$ is an antichain if the set $\{\eta:\eta\in\bar{\eta}\}$ is an antichain.

\begin{defn} \label{A str}
 \begin{enumerate}
     \item Let $A$ be a set of tuples in $\null^{\omega>}2$. We say $\varphi$ has $A^{str}$-SOP$_2$ if there is $\la a_\eta:\eta\in\null^{\omega>}2\ra$ such that\par
      \begin{enumerate}
         \item[(a)] for all $\eta \in \null^\omega2$, $\{\varphi(x,a_{\eta\lceil m}):m<\omega\}$ is consistent, and
         \item[(b)] for all $\bar{\nu}\subseteq\null^{\omega>}2$, if $\bar{\nu}\sim_{str}\bar{\xi}$ for some $\bar{\xi}\subseteq A$, then $\{\varphi(x,a_{\nu}):\nu\in\bar{\nu}\}$ is inconsistent.
      \end{enumerate}
     \item Let $A$ be a set of tuples in $\null^{\omega>}\omega$. We say $\varphi$ has $A^{str}$-TP$_1$ if there is $(a_\eta:\eta\in\null^{\omega>}\omega)$ such that\par
      \begin{enumerate}
         \item[(a)] for all $\eta \in \null^\omega\omega$, $\{\varphi(x,a_{\eta\lceil m}):m<\omega\}$ is consistent, and
         \item[(b)] for all $\bar{\nu} \subseteq\null^{\omega>}\omega$, if $\bar{\nu}\sim_{str}\bar{\xi}$ for some $\bar{\xi}\subseteq A$, then $\{\varphi(x,a_{\nu}):\nu\in\bar{\nu}\}$ is inconsistent.
      \end{enumerate}
     \item We say $T$ has $A^{str}$-SOP$_2$ (resp. $A^{str}$-TP$_1$) if it has a $A^{str}$-SOP$_2$ (resp. $A^{str}$-TP$_1$) formula. If $A=\{\bar{\nu}\}$, then we say $\varphi$ (or $T$) has $\bar{\nu}^{str}$-SOP$_2$ (resp. $\bar{\nu}^{str}$-TP$_1$). 
 \end{enumerate}
\end{defn}

\begin{rmk}
 \begin{enumerate}
     \item $\varphi$ has SOP$_2$ if and only if $\varphi$ has $\langle\langle0\rangle,\langle1\rangle\rangle^{str}$-SOP$_2$.
     \item $\varphi$ has TP$_1$ if and only if $\varphi$ has $\langle\langle0\rangle,\langle1\rangle\rangle^{str}$-TP$_1$.
     \item $\varphi$ has $k$-TP$_1$ if and only if for the collection $A$ of all antichain tuples of size $k$, $\varphi$ has $A^{str}$-TP$_1$.
     \item $\varphi$ has weak $k$-TP$_1$ if and only if $\varphi$ has $\langle\langle0\rangle, \ldots, \langle k-1\rangle\rangle^{str}$-TP$_1$.
 \end{enumerate}
\end{rmk}

We aim to prove the following;
\begin{thm} \label{nu str}
 Let $1<k<\omega$ be given.
 \begin{enumerate} 
     \item For any antichain tuple $\bar{\nu}\subseteq\null^{\omega>}\omega$ of size $k$, $T$ has TP$_1$ if and only if $T$ has $\bar{\nu}^{str}$-TP$_1$.
     \item For any antichain tuple $\bar{\nu}\subseteq\null^{\omega>}2$ of size $k$, $T$ has SOP$_2$ if and only if $T$ has $\bar{\nu}^{str}$-SOP$_2$.
 \end{enumerate}
\end{thm}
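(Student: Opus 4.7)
The easy directions of (1) and (2) are immediate: TP$_1$ (resp.\ SOP$_2$) already asserts inconsistency of every pair of incomparable nodes, so any antichain of size $k \geq 2$ — containing such a pair — is itself inconsistent. Hence TP$_1$ implies $\bar{\nu}^{str}$-TP$_1$ and SOP$_2$ implies $\bar{\nu}^{str}$-SOP$_2$ for every antichain $\bar{\nu}$.

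For the converse, fix $\bar{\nu} = \langle \nu_0, \ldots, \nu_{k-1}\rangle$ enumerated in lex-order and suppose $\varphi$ witnesses $\bar{\nu}^{str}$-TP$_1$ (resp.\ $\bar{\nu}^{str}$-SOP$_2$). By Fact~\ref{modeling} (resp.\ Lemma~\ref{str indisc tree on SOP2}) the witness tree $\langle a_\eta\rangle$ may be taken strongly indiscernible, so that inconsistency of $\{\varphi(x, a_\xi) : \xi \in \bar{\xi}\}$ depends only on the $L_0$-quantifier-free type of $\bar{\xi}$ and therefore holds for every $\bar{\xi}\sim_{str}\bar{\nu}$.

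The plan is a two-step reduction. The first step re-indexes the tree through a tree embedding so that a flexible sibling-star pattern in a new tree maps to $\bar{\nu}$-shaped antichains in the original. For (1) I define $\iota : {}^{\omega>}k \to {}^{\omega>}\omega$ recursively by $\iota(\langle\rangle) = \langle\rangle$ and $\iota(\eta^\frown\langle i\rangle) = \iota(\eta) \star \nu_i$, and set $b_\eta := a_{\iota(\eta)}$. Since $\iota$ sends $\unlhd$-chains to $\unlhd$-chains, paths of $\langle b_\eta\rangle$ are consistent; and for any antichain $(\mu_0, \ldots, \mu_{k-1})$ with $\eta^\frown\langle l\rangle \unlhd \mu_l$ for a common $\eta$ and all $l<k$, the meet identity $\iota(\mu_l) \wedge \iota(\mu_{l'}) = \iota(\eta) \star (\nu_l \wedge \nu_{l'})$ yields $\langle\iota(\mu_l)\rangle_l \sim_{str} \bar{\nu}$, whence $\{\varphi(x, b_{\mu_l}) : l < k\}$ is inconsistent. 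So $\varphi$ has weak $k$-TP$_1$ via $\langle b_\eta\rangle$. The second step is to upgrade weak $k$-TP$_1$ to TP$_1$ via a conjunction formula $\psi(x, y_0, \ldots, y_{k-2}) = \bigwedge_{i<k-1}\varphi(x, y_i)$ together with a tree of $(k-1)$-tuples of $\langle b_\eta\rangle$-parameters, chosen so that every pair of incomparable nodes in the new tree jointly supplies a weak $k$-TP$_1$ antichain and hence $\psi$-inconsistency; path-consistency of the new tree is preserved by strong indiscernibility.

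Part (2) follows the same two-step scheme inside the binary tree ${}^{\omega>}2$, with $\iota$ built from the shape of $\bar{\nu}\subseteq{}^{\omega>}2$ and the conjunction step performed by binary tupling. I expect the main obstacle to lie in the second step, especially in (2): because each node of ${}^{\omega>}2$ has only two children, arranging the conjunction tree so that every pair of incomparable nodes contributes a $\bar{\nu}$-shaped $k$-antichain of original parameters — for an arbitrary meet-shape of $\bar{\nu}$ and $k \ge 3$ — requires a shape-dependent choice of ``thickened'' branches, and one leverages the strong indiscernibility of $\langle a_\eta\rangle$ together with Ramsey-style refinements to absorb the shape mismatches.
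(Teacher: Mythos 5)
Your easy directions are fine, and your first reduction is sound: composing the witness with the embedding $\iota(\eta^\frown\langle i\rangle)=\iota(\eta)^\frown\nu_i$ does produce a $k$-branching tree in which branches are consistent and any $k$ nodes lying above $k$ distinct children of a common node are inconsistent (indeed condition (b) of Definition \ref{A str} already gives inconsistency for every tuple $\sim_{str}\bar{\nu}$, so strong indiscernibility is not even needed there). The problem is your second step, which is exactly where the content of the theorem lies, and as sketched it does not work. Your conjunction tree of $(k-1)$-tuples must satisfy two requirements simultaneously: (a) along every branch of the new tree the union of the attached tuples is consistent for $\varphi$, and (b) any two incomparable new nodes supply $k$ original parameters sitting above $k$ distinct siblings. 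But a $\bar{\nu}^{str}$-TP$_1$ (or $\bar{\nu}^{str}$-SOP$_2$) witness carries \emph{no} consistency information except along chains of the original tree; it is not an antichain tree or SSOP$_1$ witness, so sets indexed by incomparable nodes are simply undetermined. If the $(k-1)$-tuples are chains (so that branch-unions are chains and (a) holds), then two incomparable new nodes yield at most two pairwise incomparable original nodes, so for $k\geq 3$ no $k$-sibling configuration can be extracted and (b) fails; if the tuples are antichain-like (so that (b) can hold), then (a) is not implied by the hypothesis, and ``preserved by strong indiscernibility'' cannot supply it, since indiscernibility only transfers consistency between strongly similar configurations and the hypothesis never asserts consistency of any such configuration. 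The appeal to ``Ramsey-style refinements'' does not address this tension, which is precisely why the collapse of such weak/$k$-ary inconsistency patterns to genuine pairwise inconsistency is a nontrivial matter.

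The paper's proof is structured to confront exactly this point: the hard directions are obtained from Propositions \ref{char nsop2} and \ref{weak TP1 prop}, whose proofs run a transfinite construction over ${}^{\omega_1>}2$ and ${}^{\omega_1>}\omega$, using the minimal inconsistency level $m_\eta$ together with the monochromatic-subtree Lemmas \ref{monochrom lemma} and \ref{monochrom lemma 2} to manufacture both the branch consistency and the sibling inconsistency of a new conjunction formula $\psi$, and then quotes the known equivalences (TP$_1$ iff weak TP$_1$, and SOP$_2$ iff TP$_1$). If you want to keep your (genuinely different, and more direct) re-indexing step, the honest way to finish is not to reprove the collapse by a naive conjunction trick but to upgrade your $k$-branching tree to an honest weak $k$-TP$_1$ witness (every finite meet-closed subset of ${}^{\omega>}\omega$ embeds into ${}^{\omega>}k$ preserving $\trianglelefteq$, $\wedge$, $<_{lex}$, and meets of nodes above distinct siblings are preserved, so compactness gives the $\omega$-branching version) and then cite the known fact that weak $k$-TP$_1$ implies TP$_1$; as written, your proposal neither proves nor invokes that collapse, so the argument is incomplete for both (1) and (2).
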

 
To do this, we first observe lemmas of monochromatic subtrees in the colored tree.
\begin{lemma}\label{monochrom lemma} \cite[Lemma 2.16]{DS}
 Suppose $\kappa$ is a regular cardinal and we color $^{\kappa>}2$ by $\theta<\kappa$ colors. Let $c$ be the given coloring.\par
 (1) There is $\nu^{\ast}$ in $^{\kappa>}2$ and $j<\theta$ such that for any $\nu\unrhd\nu^{\ast}$ we can find $\rho\unrhd\nu$ the color of which is $j$.\par
 (2) There is an embedding $h: \null^{\omega>}2\rightarrow \null^{\kappa>}2$ such that
 \begin{itemize}
     \item $h(\eta)^\frown\langle0\rangle\unlhd h(\eta^\frown\langle0\rangle)$, and $h(\eta)^\frown\langle1\rangle\unlhd h(\eta^\frown\langle1\rangle)$
     \item Ran$(h)$ is monochromatic, that is, for any $\eta$ and $\nu$ in $\null^{\omega>}2$, the color of $h(\eta)$ is as same as that of $h(\nu)$.
 \end{itemize}
\end{lemma}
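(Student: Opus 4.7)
My plan is to handle the two parts separately, with (1) being a standard pigeonhole/cofinality argument and (2) a straightforward recursion using (1).

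For part (1), I would proceed by contradiction. Suppose that for every $\nu^{\ast}\in{}^{\kappa>}2$ and every $j<\theta$, there is some $\nu\unrhd\nu^{\ast}$ such that no $\rho\unrhd\nu$ has color $j$. Using this, I would recursively construct an increasing chain $\la\mu_j:j<\theta\ra$ in $^{\kappa>}2$ as follows. Set $\mu_0=\lr$. Given $\mu_j$, apply the failure assumption with $\nu^{\ast}=\mu_j$ and color $j$ to obtain $\mu_{j+1}\unrhd\mu_j$ above which color $j$ never appears. At limit stages $j\leq\theta$, take $\mu_j=\bigcup_{i<j}\mu_i$. The point to check here is that $\mu_j$ really lies in $^{\kappa>}2$ (i.e., has length $<\kappa$), which is exactly where regularity of $\kappa$ and $\theta<\kappa$ is used: $\mathrm{len}(\mu_j)=\sup_{i<j}\mathrm{len}(\mu_i)<\kappa$. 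Then $\mu_\theta$ is a node with the property that no $\rho\unrhd\mu_\theta$ has color $j$ for any $j<\theta$, contradicting that $c$ is a $\theta$-coloring (apply to $\rho=\mu_\theta$ itself).

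For part (2), fix $\nu^{\ast}$ and $j$ as in (1). I would define $h:\tree\to{}^{\kappa>}2$ by recursion on $\operatorname{len}(\eta)$, maintaining the invariant that $h(\eta)\unrhd\nu^{\ast}$ and $c(h(\eta))=j$. Begin by applying (1) to $\nu=\nu^{\ast}$ to pick $h(\lr)\unrhd\nu^{\ast}$ with $c(h(\lr))=j$. Given $h(\eta)\unrhd\nu^{\ast}$, both $h(\eta){\coc}\lor$ and $h(\eta){\coc}\llr$ are $\unrhd\nu^{\ast}$, so (1) yields extensions $\rho_0\unrhd h(\eta){\coc}\lor$ and $\rho_1\unrhd h(\eta){\coc}\llr$ of color $j$; set $h(\eta{\coc}\lor)=\rho_0$ and $h(\eta{\coc}\llr)=\rho_1$. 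By construction the required inequalities $h(\eta){\coc}\lor\unlhd h(\eta{\coc}\lor)$ and $h(\eta){\coc}\llr\unlhd h(\eta{\coc}\llr)$ hold, and the range is monochromatic of color $j$. The map $h$ is an embedding of trees (it preserves $\unlhd$, $\wedge$, and $<_{lex}$ because whenever $\eta\perp\nu$ the images already diverge at the level where $h(\eta\wedge\nu){\coc}\lor$ and $h(\eta\wedge\nu){\coc}\llr$ diverge).

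The only real content is in (1), and within (1) the one genuinely non-automatic step is verifying that the recursion survives the limit stages — i.e., that a union of ${<}\theta$-many nodes of $^{\kappa>}2$ is still in $^{\kappa>}2$. This is exactly where the hypothesis that $\kappa$ is regular and $\theta<\kappa$ is essential; the rest of the argument is formal.
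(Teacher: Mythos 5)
Your proof is correct and follows essentially the same route the paper uses: the paper quotes this lemma from \cite{DS} without proof, but its proof of the analogous $\omega$-branching version (Lemma \ref{monochrom lemma 2}) is exactly your argument — build an increasing chain $\la\mu_j\ra_{j<\theta}$ killing color $j$ above $\mu_{j+1}$, use $\theta<\operatorname{cf}(\kappa)$ to keep the union inside $^{\kappa>}2$ and get a node with no available color, then in (2) recursively pick color-$j$ extensions above $h(\eta)^\frown\la 0\ra$ and $h(\eta)^\frown\la 1\ra$. No gaps.
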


This can be easily modified into the lemma on $\omega$-branched trees.
\begin{lemma}\label{monochrom lemma 2}
 Suppose $\kappa$ is a regular cardinal and we color $^{\kappa>}\omega$ by $\theta<\kappa$ colors. Let $c$ be the given coloring.\par
 (1) There is $\nu^{\ast}$ in $^{\kappa>}\omega$ and $j<\theta$ such that for any $\nu\unrhd\nu^{\ast}$ we can find $\rho\unrhd\nu$ the color of which is $j$.\par
 (2) There is an embedding $h: \null^{\omega>}\omega\rightarrow \null^{\kappa>}2$ such that
 \begin{itemize}
     \item $h(\eta)^\frown\langle i\rangle\unlhd h(\eta^\frown\langle i\rangle)$ for each $i<\omega$
     \item Ran$(h)$ is monochromatic, that is, for any $\eta$ and $\nu$ in $\null^{\omega>}\omega$, the color of $h(\eta)$ is as same as that of $h(\nu)$.
 \end{itemize}
\end{lemma}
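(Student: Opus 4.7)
The plan is to mirror the proof of Lemma~\ref{monochrom lemma} step for step, swapping in the $\omega$-branched tree $\null^{\kappa>}\omega$ for $\null^{\kappa>}2$ and handling the (now countably many) immediate successors at each recursion stage. I read the target of $h$ in (2) as $\null^{\kappa>}\omega$ rather than $\null^{\kappa>}2$, since the property $h(\eta)^\frown\langle i\rangle\unlhd h(\eta^\frown\langle i\rangle)$ required for all $i<\omega$ forces the codomain to be at least $\omega$-branching.

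For (1) I would argue by contradiction. If the conclusion fails, then for every $\nu^{\ast}\in\null^{\kappa>}\omega$ and every color $j<\theta$ there is some $\nu\unrhd\nu^{\ast}$ such that no $\rho\unrhd\nu$ has color $j$. Fix an enumeration $(j_\alpha)_{\alpha<\theta}$ of the colors and recursively build a $\unlhd$-chain $(\nu_\alpha)_{\alpha\leq\theta}$ by starting with $\nu_0=\langle\rangle$, applying the failing assumption at stage $\alpha$ with parameters $\nu_\alpha$ and $j_\alpha$ to produce $\nu_{\alpha+1}\unrhd\nu_\alpha$ such that no $\rho\unrhd\nu_{\alpha+1}$ is colored $j_\alpha$, and taking $\nu_\lambda=\bigcup_{\alpha<\lambda}\nu_\alpha$ at limits. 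Regularity of $\kappa$ together with $\theta<\kappa$ guarantees that every $\nu_\alpha$ (in particular $\nu_\theta$) has length $<\kappa$ and hence belongs to $\null^{\kappa>}\omega$. But then $\nu_\theta$ extends every $\nu_{\alpha+1}$, so its color avoids every $j_\alpha$, contradicting $c(\nu_\theta)\in\theta$. The branching number of the tree does not enter this argument, so it is essentially identical to the proof of Lemma~\ref{monochrom lemma}(1).

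For (2) I would use (1) to fix a witnessing pair $(\nu^{\ast},j)$ and then construct $h$ by recursion on the height of $\eta\in\null^{\omega>}\omega$, maintaining the invariant $h(\eta)\unrhd\nu^{\ast}$ and $c(h(\eta))=j$. For the base, take $h(\langle\rangle)$ to be any $\rho_0\unrhd\nu^{\ast}$ with color $j$, which exists by (1) applied at $\nu=\nu^{\ast}$. For the step, given $h(\eta)$ satisfying the invariant, for each $i<\omega$ the node $h(\eta)^\frown\langle i\rangle$ still extends $\nu^{\ast}$, so (1) yields some $\rho_i\unrhd h(\eta)^\frown\langle i\rangle$ with color $j$; set $h(\eta^\frown\langle i\rangle):=\rho_i$. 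Both properties demanded of $h$ are then immediate.

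The only real obstacle is the length bookkeeping: in (1) we must check that the limit unions $\nu_\lambda$ stay inside $\null^{\kappa>}\omega$, and in (2) we must check that each $h(\eta)$ has length $<\kappa$. The first is exactly where regularity of $\kappa$ combined with $\theta<\kappa$ is used, and the second is automatic because $\null^{\omega>}\omega$ contributes only countably many successive extensions (implicitly assuming $\kappa\geq\omega_1$, as in Lemma~\ref{monochrom lemma}). Beyond these bookkeeping points the argument is a direct transcription of the binary case.
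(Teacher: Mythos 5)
Your proposal is correct and follows essentially the same route as the paper: the same contradiction argument for (1), building a $\unlhd$-increasing chain that kills one color at each of the $\theta<\kappa=\operatorname{cf}(\kappa)$ stages and taking its union, and the same recursion for (2), repeatedly applying (1) above $h(\eta)^\frown\langle i\rangle$ to stay in color $j$. Your reading of the codomain as $\null^{\kappa>}\omega$ (the $\null^{\kappa>}2$ in the statement is a typo) agrees with what the paper's own proof actually constructs.
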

\begin{proof}
(1) Suppose not. For $i<\theta$, we inductively choose $\eta_i\in\null^{\kappa>}\omega$ such that 
\begin{itemize}
    \item if $i<j$, then $\eta_i\unlhd\eta_j$, and
    \item for $\rho\in\null^{\kappa>}\omega$, if $\eta_{i+1}\unlhd\rho$, then $c(\rho)\neq i$.
\end{itemize}
Let $\nu=\bigcup_{i<\theta}\eta_i$. Since $\theta<$ cf$(\kappa)$, len$(\nu)$ cannot be $\kappa$, so $\nu\in\null^{\kappa>}\omega$. But this contradicts that $c(\nu)$ has no color. \par
(2) Fix $\nu^{\ast}$ and $j<\theta$ in (1). Find $\xi\unrhd\nu^{\ast}$ such that $c(\xi)=j$ then choose $h(\la\ra)$ to be $\xi$. In the same manner, assuming $h(\eta)$ is already chosen to be some $\nu\in\null^{\kappa>}\omega$, we find $\xi_i\unrhd \nu^\frown i$ for each $i<\omega$ such that $c(\xi_i)=j$, then choose $h(\eta^\frown\la i\ra)$ to be $\xi_i$. This inductive process gives us the desired map.
\end{proof}



Using the two preceding lemmas, we find equivalent conditions of SOP$_2$ and weak TP$_1$ respectively, as the index structures that they rely on are different. 

 \begin{prop}\label{char nsop2}
 For each $\eta\in\null^{\omega_1>}2$, $m<\omega$, $\alpha\leq\omega_1$, let $K_{\eta, m, \alpha}$ be the set $\{\eta^\frown\nu^\frown0^\beta:\nu\in\null^m2, \beta<\alpha\}$, and $O_\eta$ be the set $\{\eta^\frown0^\beta:\beta<\omega_1\}$.\par
 The following are equivalent.
 \begin{enumerate}
     \item $T$ is NSOP$_2$.
     \item For all $\varphi(x,y)$ and strongly indiscernible tree $\la a_\eta:\eta\in\null^{\omega_1>}2\ra$, if $\{\varphi(x,a_\nu):\nu\in O_{\langle\rangle}\}$ is consistent, then for each $\eta\in\null^{\omega_1>}2$ and $m<\omega$, $\{\varphi(x,a_\nu):\nu\in K_{\eta, m, \omega_1}\}$ is consistent.
 \end{enumerate}
 \end{prop}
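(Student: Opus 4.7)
The plan is to prove both directions by contraposition.

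For $(2)\Rightarrow(1)$, suppose $\varphi(x,y)$ witnesses $\sopz$. By Lemma \ref{str indisc tree on SOP2} fix a strongly indiscernible witnessing tree on $\tree$, and stretch it via compactness plus the modeling property to a strongly indiscernible tree $\la a_\eta:\eta\in\null^{\omega_1>}2\ra$ still satisfying the $\sopz$-conditions (consistency along chains, inconsistency across incomparable pairs). Then $O_{\lr}$ is a chain so $\{\varphi(x,a_\nu):\nu\in O_{\lr}\}$ is consistent, while $\lor,\llr\in K_{\lr,1,\omega_1}$ (at $\beta=0$) are incomparable, so $\{\varphi(x,a_{\lor}),\varphi(x,a_{\llr})\}\subseteq\{\varphi(x,a_\nu):\nu\in K_{\lr,1,\omega_1}\}$ is inconsistent, violating (2).

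For $(1)\Rightarrow(2)$, suppose (2) fails: some $\varphi$, strongly indiscernible tree $\la a_\eta\ra$, $\eta_0$ and $m$ witness $\{\varphi(x,a_\nu):\nu\in O_{\lr}\}$ consistent but $\{\varphi(x,a_\nu):\nu\in K_{\eta_0,m,\omega_1}\}$ inconsistent. By strong indiscernibility applied to the subtree above $\eta_0$ we may assume $\eta_0=\lr$, and by compactness there is a minimal-size finite $F\subseteq K_{\lr,m,\omega_1}$ with $\{\varphi(x,a_\nu):\nu\in F\}$ inconsistent, of size $k_0\geq 2$. If $k_0=2$, then $F$ is necessarily an incomparable pair (since any chain inside $K_{\lr,m,\omega_1}$ is strongly similar to a subchain of $O_{\lr}$ and hence consistent); because in $L_0=\{\lhd,\wedge,<_{lex}\}$ every two-element antichain in $\null^{\omega_1>}2$ has the same quantifier-free type up to lex-orientation, strong indiscernibility forces inconsistency for every incomparable pair of parameters, so the identity embedding $\tree\hookrightarrow\null^{\omega_1>}2$ directly witnesses $\sopz$ for $\varphi$, contradicting $\nsopz$. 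If $k_0\geq 3$, set $\psi(x,\bar y)=\bigwedge_{j<k_0-1}\varphi(x,y_j)$ and construct a tree of $(k_0-1)$-tuples $\bar c_\sigma$ indexed by $\sigma\in\tree$; the entries of $\bar c_\sigma$ are chosen extensions of $h(\sigma)$ for a tree embedding $h:\tree\hookrightarrow\null^{\omega_1>}2$ with sufficient zero-padding between successive branch-points, arranged so that (a) each $\bar c_\sigma$ has the strong-similarity type of a consistent $(k_0-1)$-subtuple of $F$ and each path-thickening $\bigcup_n\bar c_{\tau\lceil n}$ is strongly similar to a consistent configuration extending $O_{\lr}$, while (b) the union $\bar c_\sigma\cup\bar c_{\sigma'}$ for every incomparable $\sigma\perp\sigma'$ contains a subtuple strongly similar to $F$. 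Thus $\psi$ witnesses $\sopz$ for $T$, again contradicting $\nsopz$.

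The principal obstacle is the combinatorial design in the $k_0\geq 3$ case, where $h$ and the tuple assignment must simultaneously realize two strong-similarity goals: path-thickenings matching consistent sub-configurations extending $O_{\lr}$, and incomparable-pair unions realizing $F$'s $L_0$-quantifier-free type at the meet $h(\sigma)\wedge h(\sigma')$. This is handled by inserting zero-padding between branchings of $h$ proportional to the depth of $F$, and by applying the monochromatic subtree lemma (Lemma \ref{monochrom lemma}) to extract a uniform inconsistency pattern so that all relevant configurations realize the intended qf-types.
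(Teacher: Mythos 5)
The $(2)\Rightarrow(1)$ direction and your $k_0=2$ case are fine (and the $k_0=2$ observation is a nice shortcut: minimality forces an incomparable pair, and all incomparable pairs share a quantifier-free $L_0$-type up to lex-orientation, so the original tree already witnesses $\sopz$ with $\varphi$). The genuine gap is in the $k_0\geq 3$ case, which is where essentially all of the content of $(1)\Rightarrow(2)$ lives. Your condition (a) asks that each path-thickening $\bigcup_n\bar c_{\tau\lceil n}$ be ``strongly similar to a consistent configuration extending $O_{\lr}$,'' but no such configuration is available: the only sets you know to be consistent are chains (transferred from $O_{\lr}$ by strong indiscernibility) and sets of size $<k_0$ inside $K_{\lr,m,\omega_1}$ (by minimality of $F$). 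A path-thickening built from blocks that contain incomparable elements is not strongly similar to any subchain, and its finite subsets grow without bound, so their consistency cannot be transferred from the size-$<k_0$ fact either. Strong indiscernibility only moves consistency from configurations you already control, so (a) is not something you can simply ``arrange by zero-padding''; it begs exactly the question the proposition is about. This is precisely where the paper's proof does its real work: it runs a transfinite induction over $\trel$, choosing at each node the \emph{least} $m_\eta$ with $p_\eta\cup\{\varphi(x,a_\nu):\nu\in K_{\nu_\eta,m_\eta,\omega_1}\}$ inconsistent, so that $p_\eta\cup\{\varphi(x,a_\nu):\nu\in K_{\nu_\eta\coc\la i\ra,m_\eta-1,\omega_1}\}$ remains consistent; the new blocks $w_{\eta\coc\la i\ra}$ are carved out of these still-consistent sets (keeping a fresh chain $O_{\nu_{\eta\coc\la i\ra}}$ in reserve), which is what maintains consistency along branches, while $w_{\eta\coc\lor}\cup w_{\eta\coc\llr}$ together with a finite $q_\eta\subseteq p_\eta$ recovers the inconsistency. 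Since the $m_\eta$, $l_\eta$, $q_\eta$ vary from node to node, a uniformization step via the coloring Lemma~\ref{monochrom lemma} is then needed to extract a single finite conjunction $\psi$; nothing in your sketch replaces either the inductive invariant or this uniformization.

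A secondary problem is the shape of your intended witness. You fix the arity at $k_0-1$ and want every union $\bar c_\sigma\cup\bar c_{\sigma'}$ over incomparable $\sigma\perp\sigma'$ to contain a copy of $F$. Any such copy must be split across the two blocks, and then all cross-meets equal the single node $h(\sigma)\wedge h(\sigma')$ with a uniform lex-orientation; so $F$ would have to decompose as $F_1\sqcup F_2$ with all meets between $F_1$ and $F_2$ equal to one common node strictly below the internal meets. A minimal inconsistent $F\subseteq K_{\lr,m,\omega_1}$ need not admit such a split (for instance if some element of $F$ is an initial segment of the others, its meet relations cannot be reproduced across incomparable positions), and your plan does not address this. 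In the paper's construction the conjunction $\psi$ is not built from subtuples of $F$ at all --- its conjuncts are $q\cup w$-blocks of size $|q|+2^{i}+k$ coming from the monochromatic subtree --- so the fixed-arity, $F$-splitting design would need a separate argument even where it is realizable.
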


  \begin{proof}
  ($1\Leftarrow2$) Use \ref{str indisc tree on SOP2} and compactness. \par
  ($1\Rightarrow2$) Suppose not. Fix $\varphi$ and $\la a_\eta:\eta\in\null^{\omega_1>}2\ra$. We inductively choose a finite subset $w_\eta\subseteq\null^{\omega_1>}2$ and $\nu_\eta\in\null^{\omega_1>}2$ for each $\eta\in\null^{\omega_1>}2$ so that the following conditions hold after the construction;\par
  \begin{enumerate}
      \item[(a)] the union of $\{\varphi(x,a_\nu):\nu\in\bigcup\{w_{\eta\lceil\alpha}:\alpha\leq\text{len}(\eta)\}\}$ and $\{\varphi(x,a_\nu):\nu\in O_{\nu_\eta}\}$ is consistent, and
      \item[(b)] the union of $\{\varphi(x,a_\nu):\nu\in\bigcup\{w_{\eta\lceil\alpha}:\alpha\leq\text{len}(\eta)\}\}$ and $\{\varphi(x,a_\nu):\nu\in w_{\eta^{\frown}0}\cup w_{\eta^{\frown}1}\}$ is inconsistent for any given $\eta\in\null^{\omega_1>}2$.
  \end{enumerate}
  Set $w_{\langle\rangle}=\varnothing$ and $\nu_{\langle\rangle}=\langle\rangle$. At limit case, $w_\eta=\varnothing$ and $\nu_\eta=\bigcup_{\xi\unlhd\eta}\nu_\xi$.\par
  Assume $w_{\eta\lceil\alpha}$, $\nu_{\eta\lceil\alpha}$ is chosen for all $\alpha\leq\text{len}(\eta)$. We choose $w_{\eta^{\frown}i}$ and $\nu_{\eta^{\frown}i}$ for each $i=0,1$ as follows;\par
  Let $p_\eta=\{\varphi(x,a_\nu):\nu\in\bigcup\{w_{\eta\lceil\alpha}:\alpha\leq\text{len}(\eta)\}\}$. 
  We take the least $m_\eta<\omega$ where $p_\eta\cup\{\varphi(x,a_\nu):\nu\in K_{\nu_\eta,m_\eta,\omega_1}\}$ is inconsistent. This $m_\eta$ always exists, otherwise by strong indiscernibility $\{\varphi(x,a_\nu):\nu\in K_{\xi,m,\omega_1}\}$ is consistent for any $\xi\in\null^{\omega_1>}2$ and $m$, which contradicts to the supposition. Note that $m_\eta>0$ because of (a).
  \par
  By minimality of $m_\eta$ and strong indiscernibility, $p_\eta\cup\{\varphi(x,a_\nu):\nu\in K_{\nu_\eta^\frown i,m_\eta-1,\omega_1}\}$ is consistent for $i=0,1$. But by compactness and strong indiscernibility, we have $l_\eta<\omega$ such that $p_\eta\cup\{\varphi(x,a_\nu):\nu\in K_{\nu_\eta,m_\eta,l_\eta}\}$ is inconsistent.\par
  Take $w_{\eta^\frown i}=K_{\nu_\eta^\frown i,m_\eta-1,l_\eta}$ and $\nu_{\eta^\frown i}=\nu_{\eta}^\frown i^\frown0^{m_{\eta}-1\frown}0^{l_{\eta}+1}$ for $i=0,1$.

  Having done the construction, we choose a finite subset $q_\eta\subseteq p_\eta$ for each $\eta$ such that $q_\eta\cup\{\varphi(x,a_\nu):\nu\in w_{\eta^\frown0}\cup w_{\eta^\frown1}\}$ is inconsistent.\par
  Let $\tau_\eta=\{\nu:\varphi(x,a_\nu)\in q_\eta\}$. We may assume $\tau_\eta$ is a finite collection of $K_{\nu,m,\alpha}$s each of which is a finite. 
  Consider $\tau_\eta$ and $w_\eta$ as a tuple, denoting by $\bar{\tau}_\eta$ and $\bar{w}_\eta$. The number of $\sim_{str}$-equivalent classes in $\{\bar{\tau}_\eta:\eta\in\null^{\kappa>}2\}$ and $\{\bar{w}_{\eta^\frown i}:\eta\in\null^{\kappa>}2, i=0,1\}$ are both countable. By Lemma \ref{monochrom lemma}, we have an embedding $h: \null^{\omega>}2\rightarrow \null^{\kappa>}2$ whose range is monochromatic: for any $\eta$ and $\nu$ in $\null^{\omega>}2$, $\bar{\tau}_{h(\eta)}\sim_{str}\bar{\tau}_{h(\nu)}$, and $\bar{w}_{h(\eta)}\sim_{str}\bar{w}_{h(\nu)}$.
  \par
  Define a formula $\psi(x,y)$ and a tree $\la b_\eta:\eta\in\null^{\omega>}2\ra$ such that $\psi(x,b_\eta)=\bigwedge q_{h(\langle\rangle)}\wedge\bigwedge\{\varphi(x,a_\nu):\nu\in w_{h(0^\frown\eta)}\}$. By the minimality of $m_\eta$,  the choice of $w_\eta$, $\nu_\eta$, and the indiscernibility, $\psi(x,y)$ witness SOP$_2$ with $\la b_\eta:\eta\in\null^{\omega>}2\ra$. 
\end{proof}

\begin{prop} \label{weak TP1 prop}
For each $\eta\in\null^{\omega_1>}\omega$, $k\leq\omega$, $m<\omega$, $\alpha\leq\omega_1$, let $K_{\eta, k, m, \alpha}$ to be the set $\{\eta^\frown\nu^\frown0^\beta:\nu\in\null^m k, \beta<\alpha\}$, and $O_\eta$ to be $\{\eta^\frown0^\beta:\beta<\omega_1\}$.\par
  The following are equivalent.
 \begin{enumerate}
     \item $T$ does not have weak-TP$_1$.
     \item For all $\varphi(x,y)$ and strongly indiscernible tree $\la a_\eta:\eta\in\null^{\omega_1>}\omega\ra$, if $\{\varphi(x,a_\nu):\nu\in O_{\langle\rangle}\}$ is consistent, then for each $\eta\in\null^{\omega_1>}\omega$ and $m<\omega$, $\{\varphi(x,a_\nu):\nu\in K_{\eta, \omega,m, \omega_1}\}$ is consistent.
 \end{enumerate}
\end{prop}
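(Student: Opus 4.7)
The argument mirrors Proposition~\ref{char nsop2} verbatim, with $\null^{\omega_1>}2$ replaced by $\null^{\omega_1>}\omega$ and Lemma~\ref{monochrom lemma} replaced by Lemma~\ref{monochrom lemma 2}; the only genuinely new input is tracking the extra parameter $k$ in $K_{\cdot,k,\cdot,\cdot}$.

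For $(2\Rightarrow 1)$ I argue contrapositively. Suppose $T$ has weak $k$-TP$_1$, witnessed by some $\varphi$ and a tree on $\null^{\omega>}\omega$. Stretching to height $\omega_1$ by compactness and applying the $\omega$-branching analogue of Fact~\ref{modeling}, I obtain a strongly indiscernible $\la a_\eta:\eta\in\null^{\omega_1>}\omega\ra$ that still witnesses weak $k$-TP$_1$ with $\varphi$. Then $\{\varphi(x,a_\nu):\nu\in O_{\langle\rangle}\}$ is consistent (it is a branch), whereas for any $i_0<\cdots<i_{k-1}<\omega$ and any $\beta_l<\omega_1$ the $k$-tuple $(\la i_l\ra^\frown 0^{\beta_l})_{l<k}\subseteq K_{\langle\rangle,\omega,1,\omega_1}$ fits the hypothesis of weak $k$-TP$_1$ at $\langle\rangle$; so $\{\varphi(x,a_\nu):\nu\in K_{\langle\rangle,\omega,1,\omega_1}\}$ is inconsistent, contradicting (2).

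For $(1\Rightarrow 2)$, again contrapositively, fix $\varphi$, a strongly indiscernible $\la a_\eta:\eta\in\null^{\omega_1>}\omega\ra$ with $\{\varphi(x,a_\nu):\nu\in O_{\langle\rangle}\}$ consistent, and $\eta^*,m^*$ witnessing failure of (2). Inductively choose finite $w_\eta\subseteq\null^{\omega_1>}\omega$ and $\nu_\eta\in\null^{\omega_1>}\omega$ for every $\eta\in\null^{\omega_1>}\omega$, imitating the construction in Proposition~\ref{char nsop2} but branching $\omega$ ways at each successor stage: with $p_\eta=\{\varphi(x,a_\nu):\nu\in\bigcup_{\alpha\le\text{len}(\eta)}w_{\eta\lceil\alpha}\}$, take the least $m_\eta$ such that $p_\eta\cup\{\varphi(x,a_\nu):\nu\in K_{\nu_\eta,\omega,m_\eta,\omega_1}\}$ is inconsistent (existence uses the failure of (2) together with strong indiscernibility; minimality delivers consistency condition (a) along each child), use compactness and strong indiscernibility to find $k_\eta,l_\eta<\omega$ with $p_\eta\cup\{\varphi(x,a_\nu):\nu\in K_{\nu_\eta,k_\eta,m_\eta,l_\eta}\}$ already inconsistent, and set $w_{\eta^\frown i}=K_{\nu_\eta^\frown\la i\ra,k_\eta,m_\eta-1,l_\eta}$ with $\nu_{\eta^\frown i}$ an appropriate zero-padded extension, for each $i<\omega$.

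To finish, pick finite $q_\eta\subseteq p_\eta$ realising the inconsistency at each $\eta$. The $\sim_{str}$-classes of the tuples $\bar\tau_\eta$ (parameters appearing in $q_\eta$) and $\bar w_{\eta^\frown i}$ ($i<\omega$), together with the triples $(k_\eta,m_\eta,l_\eta)$, take only countably many values, so Lemma~\ref{monochrom lemma 2} produces an embedding $h:\null^{\omega>}\omega\to\null^{\omega_1>}\omega$ along which all this data is $\sim_{str}$-constant; in particular $k_\eta$ stabilises at a single $k<\omega$. Defining $\psi$ and $b_\eta$ from $q_{h(\langle\rangle)}$ and $w_{h(0^\frown\,\cdot\,)}$ exactly as in the SOP$_2$ case then produces a witness of weak $k$-TP$_1$. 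The main obstacle, and the one place where the analogy with Proposition~\ref{char nsop2} requires honest work, is this last uniformisation step: verifying that strong indiscernibility together with the monochromatic embedding really does transport the particular $k$-branch inconsistency extracted during the construction to \emph{every} $k$-tuple of children at every node of the resulting tree, which is exactly what the weak $k$-TP$_1$ definition demands.
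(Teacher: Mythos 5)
Your proposal is correct and follows essentially the same route as the paper: the same inductive construction of $w_\eta,\nu_\eta$ with the least $m_\eta$, the same extraction of finite $k_\eta,l_\eta$ by compactness plus strong indiscernibility, the same countable colouring and monochromatic embedding via Lemma~\ref{monochrom lemma 2}, and the same definition of $\psi$ and $\la b_\eta\ra$, while the $(2\Rightarrow1)$ direction is the paper's ``modeling property and compactness'' made explicit. The step you flag as needing honest work is exactly what the paper disposes of in one line during the construction: since $K_{\nu_\eta,k_\eta,m_\eta,l_\eta}=\bigcup_{j<k_\eta}K_{\nu_\eta{}^\frown j,k_\eta,m_\eta-1,l_\eta}$, the inconsistency for the first $k_\eta$ children transports to every increasing $k_\eta$-tuple of children by strong indiscernibility, and monochromaticity of the $\bar{w}$-classes keeps $k_\eta$ constant along the embedding.
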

  \begin{proof}
  ($1\Leftarrow2$) Use modeling property and compactness.\par
  ($1\Rightarrow2$) Suppose not. Fix $\varphi(x,y)$ and $\la a_\eta:\eta\in\null^{\omega_1>}\omega\ra$. 
  We inductively choose a finite subset $w_\eta\subseteq\null^{\omega_1>}\omega$ and $\nu_\eta\in^{\omega_1>}\omega$ for each $\eta\in\null^{\omega_1>}\omega$ so that the following conditions hold after the construction;\par
  \begin{enumerate}
      \item[(a)] the union of $\{\varphi(x,a_\nu):\nu\in\bigcup\{w_{\eta\lceil\alpha}:\alpha\leq\text{len}(\eta)\}\}$ and $\{\varphi(x,a_\nu):\nu\in O_{\nu_\eta}\}$ is consistent,
      \item[(b)] the union of $\{\varphi(x,a_\nu):\nu\in\bigcup\{w_{\eta\lceil\alpha}:\alpha\leq\text{len}(\eta)\}\}$ and $\{\varphi(x,a_\nu):\nu\in \bigcup_{i<k}w_{\eta^{\frown}i}\}$ is inconsistent.
  \end{enumerate}
\par
  Set $w_{\langle\rangle}=\varnothing$ and $\nu_{\langle\rangle}=\langle\rangle$. At limit case, $w_\eta=\varnothing$ and $\nu_\eta=\bigcup_{\xi\unlhd\eta}\nu_\xi$.\par
  Assume $w_{\eta\lceil\alpha}$, $\nu_{\eta\lceil\alpha}$ is chosen for all $\alpha\leq\text{len}(\eta)$. We choose $w_{\eta^\frown i}$, $\nu_{\eta^\frown i}$ for each $i<\omega$ as follows;\par
  Let $p_\eta=\{\varphi(x,a_\nu):\nu\in\bigcup\{w_{\eta\lceil\alpha}:\alpha\leq\text{len}(\eta)\}\}$. 
  Find the least $m_\eta<\omega$ where $p_\eta\cup\{\varphi(x,a_\nu):\nu\in K_{\nu_\eta,\omega,m_\eta,\omega_1}\}$ is inconsistent. We can always find such $m_\eta$, otherwise by strong indiscernibility $\{\varphi(x,a_\nu):\nu\in K_{\nu_\eta,\omega,m,\omega_1}\}$ is consistent for any $\xi\in\null^{\omega_1>}\omega$ and $m$, which contradicts to the supposition. Note that $m_\eta>0$ because of (a).\par
 By minimiality of $m_\eta$ and strong indiscernibility, $p_\eta\cup\{\varphi(x,a_\nu):\nu\in K_{\nu_\eta^\frown i,\omega,m_\eta-1,\omega_1}\}$ is consistent for any $i<\omega$.\par
  To argue inconsistency, we need an observation on strongly indiscernible trees.
  \begin{obs}
  Let $\la a_\eta:\eta\in\null^{\omega_1>}\omega\ra$ be strongly indiscernible. If $\{\varphi(x,a_\nu):\nu\in K_{\eta, \omega, m, \omega_1}\}$ is inconsistent for some $m<\omega$ and $\eta\in\null^{\omega_1>}\omega$, then there is some $k,l<\omega$ such that $\{\varphi(x,a_\nu):\nu\in K_{\eta, k, m, l}\}$ is inconsistent.
  \end{obs}
  \par
  By the above observation, we have $k_\eta, l_\eta<\omega$ such that $p_\eta\cup\{\varphi(x,a_\nu):\nu\in K_{\eta, k_\eta, m_\eta, l_\eta}\}$ is inconsistent.
  Take $w_{\eta^\frown i}=K_{\eta^\frown i, k_\eta, m_\eta-1, l_\eta}$ 
  and $\nu_{\eta^\frown i}=\nu_{\eta}^\frown i^\frown0^{m_{\eta}-1\frown}0^{l_{\eta}+1}$ for $i<\omega$.
  Note that for any $i_0<\cdots<i_{k_\eta-1}<\omega$, $p_\eta\cup\bigcup_{j<k_\eta}\{\varphi(x,a_\nu):\nu\in w_{\eta^\frown i_j}\}$ is inconsistent by strong indiscernibility.
\par
  Having done the construction, we choose a finite subset $q_\eta\subseteq p_\eta$ for each $\eta$ such that $q_\eta\cup\bigcup_{j<k_\eta}\{\varphi(x,a_\nu):\nu\in w_{\eta^\frown j}\}$ is inconsistent.\par
  Let $\tau_\eta=\{a_\nu:\varphi(x,a_\nu)\in q_\eta\}$. By observation again, we may assume $\tau_\eta$ is a finite collection of $K_{\nu,k,m,l}$s. Considering $\tau_\eta$ as a tuple, the number of $\sim_{str}$-equivalent classes of $\{\bar{\tau}_\eta:\eta\in\null^{\kappa>}\omega\}$ and $\{\bar{w}_{\eta^\frown i}:\eta\in\null^{\kappa>}\omega, i<\omega\}$ are both countable. By Lemma \ref{monochrom lemma 2}, we have an embedding $h: \null^{\omega>}\omega\rightarrow \null^{\omega_1>}\omega$ whose range is monochromatic, that is, for any $\eta$ and $\nu$ in $^{\omega>}\omega$, $\bar{\tau}_{h(\eta)}\sim_{str}\bar{\tau}_{h(\nu)}$, and $\bar{w}_{h(\eta)}\sim_{str}\bar{w}_{h(\nu)}$. \par
  Define a formula $\psi(x,y)$ and a tree $\la b_\eta:\eta\in\null^{\omega>}\omega\ra$ such that $\psi(x,b_\eta)=\bigwedge q_{h(\langle\rangle)}\wedge\bigwedge\{\varphi(x,a_\nu):\nu\in w_{h(0^\frown\eta)}\}$. Then $\psi(x,y)$ and $\la b_\eta:\eta\in\null^{\omega>}\omega\ra$ witness weak TP$_1$.
   \end{proof} 

Finally, we finish proving theorem \ref{nu str}.

\begin{proof}[Proof of Theorem~\ref{nu str}]
(1) Suppose $\varphi$ has TP$_1$. Since any antichain tuple contains $\unlhd$-incomparable pairs, $\varphi$ has $\bar{\nu}^{str}$-TP$_1$ for any antichain  $\bar{\nu}$.\par
To show the converse, note that every finite antichain in $^{\omega>}\omega$ can be strongly embedded into $^n\omega$ for some $n<\omega$. So if $T$ has $\bar
\nu^{str}$-TP$_1$ for some antichain $\bar{\nu}$, then $T$ has weak TP$_1$ by proposition \ref{weak TP1 prop}. By Fact \ref{fact:weak TP1 TP1}, $T$ has TP$_1$.
\par
(2) Note that $T$ has SOP$_2$ if and only if $T$ has TP$_1$. Then by (1), $T$ has $\bar{\nu}^{str}$-TP$_1$. Let $\la a_\eta:\eta\in\null^{\omega>}\omega\ra$ be the witness of $\bar{\nu}^{str}$-TP$_1$-ness. Then $\varphi$ with the subtree $\la a_\eta:\eta\in\null^{\omega>}2\ra$ satisfies $\bar{\nu}^{str}$-SOP$_2$.\par
The converse is clear by proposition \ref{char nsop2}.
\end{proof}

\section{Tree indiscernibility for witnesses of $\sopl $}\label{sec3}

Let us recall the notion of SOP$_1$ in \cite{DS}.

\begin{defn}
 Let $\varphi(x,y)$ be a formula in $T$. We say $\varphi(x,y)$ has {\it 1-strong order property} (SOP$_1$) if there is a tree $\la a_{\eta}\ra_{\eta \in \null^{\omega>}2}$ such that
 \begin{enumerate}
     \item[(i)] For all $\eta \in \null^{\omega}2$, $\{ \varphi (x,a_{\eta \lceil \alpha}) : \alpha < \omega \}$ is consistent, and
     \item[(ii)] For all $\eta, \nu \in \null^{\omega>}2$, $\{ \varphi (x,a_{\eta\coc\llr}), \varphi (x,a_{\eta\coc\lor\coc\nu}) \}$ is inconsistent.
 \end{enumerate}
We say $T$ has SOP$_1$ if it has a SOP$_1$ formula. We say $T$ is NSOP$_1$ if it does not have SOP$_1$.
\end{defn}

For a witness of SOP$_2$, modeling property of strong indiscernibility allows us to assume that the parameter part of the witness is strongly indiscernible. But in the case of SOP$_1$, it can not make the same convenience. If a witness of SOP$_1$ has strong indiscernibility, then it must witness SOP$_2$. Thus strong indiscernibility is not such a suitable tool for dealing with SOP$_1$. So in this section, we develop a tree-indiscernibility which can be applied to witnesses of SOP$_1$.
 
\begin{defn}\label{def of tree-indisc}
For $\bar{\eta}=\langle \eta_0 , ... , \eta_n\rangle$, $\bar{\nu}=\langle \nu_0, ... , \nu_n \rangle$ ($\eta_i , \nu_i \in\tree $ for each $i\leq n$), we say $\bar{\eta}$ and $\bar{\nu}$ are $\alpha$-equivalent ($\bar{\eta}\approx_{\alpha}\bar{\nu}$) if they satisfy
\begin{enumerate}
\item[(i)] $\bar{\eta}$ and $\bar{\nu}$ are $\wedge$-closed,
\item[(ii)] $\eta_i \trianglelefteq \eta_j$ if and only if $\nu_i \tri \nu_j$ for all $i, j \leq n$,
\item[(iii)] $\eta_i{}\coc d \tri \eta_j$ if and only if $\nu_i{}\coc d \tri \nu_j$ for all $i, j \leq n$ and $d <2$.
\end{enumerate}
We say $\bar{\eta}$ and $\bar{\nu}$ are $\beta$-equivalent ($\bar{\eta}\approx_{\beta}\bar{\nu}$) if they satisfy (i), (ii), (iii), and
\begin{enumerate}
\item[(iv)] $\eta_i \coc \llr = \eta_j$ if and only if $\nu_i \coc \llr = \nu_j$ for all $i,j\leq n$.
\end{enumerate}
We say $\bar{\eta}$ and $\bar{\nu}$ are $\gamma$-equivalent ($\bar{\eta}\approx_{\gamma}\bar{\nu}$) if they satisfy (i), (ii), (iii), (iv), and
\begin{enumerate}
\item[(v)] $\eta_{i}\coc\lor=\eta_j$ if and only if $\nu_{i}\coc\lor=\nu_{j}$ for all $i,j\leq n$.
\item[(vi)] $\eta_i=\sigma\coc\lor$ for some $\sigma\in\tree$ if and only if $\nu_i=\tau\coc\lor$ for some $\tau\in\tree$, for all $i\leq n$.
\item[(vii)] $\eta_i=\sigma\coc\llr$ for some $\sigma\in\tree$ if and only if $\nu_i=\tau\coc\llr$ for some $\tau\in\tree$, for all $i\leq n$.
\end{enumerate}

We say $\la a_{\eta}\ra_{\eta\in \tree}$ is $\alpha$-indiscernible ($\beta$, $\gamma$-indiscernible, resp.) if $\bar{\eta}\approx_{\alpha} \bar{\nu}$ ($\bar{\eta}\approx_{\beta} \bar{\nu}$, $\bar{\eta}\approx_{\gamma} \bar{\nu}$, resp.) implies $a_{\bar{\eta}}\equiv a_{\bar{\nu}}$.
\end{defn}

The goal of this section is Theorem \ref{beta}, which says that if a theory has a witness of SOP$_1$, then the theory has a $\beta$-indiscernible witness of SOP$_1$. As we mentioned in the introduction, the outline of proof came from \cite{DS2}. But the proof of \cite{DS2} omits some important step, so that the desired conclusion has not been reached completely. We include a complete proof here, explain what proof of \cite{DS2} omits, and how we complement it. 

To show Theorem \ref{beta} we first prove that if a theory has SOP$_1$, then it has a $\gamma$-indiscernible witness of SOP$_1$ (Lemma \ref{gamma}). And then we obtain a $\beta$-indiscernible witness of SOP$_1$ from this $\gamma$-indiscernible one by taking suitable restriction of the parameter part (Theorem \ref{beta}).

Recall the modeling property of $\alpha$-indiscernibility which appears in \cite{KK}.

\begin{fact} \cite[Proposition~2.3]{KK} \label{a}
For any $\la a_{\eta}\ra_{\eta\in \tree}$, there exists $\la b_{\eta}\ra_{\eta\in \tree}$ such that
\begin{enumerate}
\item[(i)] $\la b_{\eta}\ra_{\eta\in \tree}$ is $\alpha$-indiscernible,
\item[(ii)] for any finite set $\Delta$ of $\mathcal{L}$-formulas and $\wedge$-closed $\bar{\eta}=\la \eta_0, ..., \eta_n \ra$, there exists $\bar{\nu}\approx_{\alpha}\bar{\eta}$ such that $\bar{b}_{\bar{\eta}} \equiv_{\Delta} \bar{a}_{\bar{\nu}}$.
\end{enumerate}
\end{fact}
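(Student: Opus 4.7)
The statement is, in essence, the modeling property for $\alpha$-indiscernibility on $\tree$, parallel to Fact~\ref{modeling}. I would view $\tree$ as an index structure in the language $L_{\alpha}=\{\unlhd,\wedge,C_{0},C_{1}\}$, where $C_{d}(x,y)$ is interpreted as ``$x\coc\langle d\rangle\unlhd y$''. Inspecting Definition~\ref{def of tree-indisc}, one checks that for $\wedge$-closed tuples $\bar{\eta},\bar{\nu}$, the relation $\bar{\eta}\approx_{\alpha}\bar{\nu}$ is exactly the statement that $\bar{\eta}$ and $\bar{\nu}$ share the same quantifier-free $L_{\alpha}$-type (the wedge function adds nothing once we are already wedge-closed). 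Under this translation, condition (i) of the fact says that $\la b_{\eta}\ra$ is an $L_{\alpha}$-indexed indiscernible, and (ii) says that $\la b_{\eta}\ra$ is $L_{\alpha}$-based on $\la a_{\eta}\ra$ in the sense of Definition~2.3. So the fact amounts to asserting that $L_{\alpha}$-indexed indiscernibles on $\tree$ have the modeling property.

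\textbf{Step 1 (tree-Ramsey input).} I would first prove that for any finite coloring of the $\wedge$-closed $n$-tuples of $\tree$, there is an embedding $h\colon\tree\hookrightarrow\tree$ preserving $\unlhd$, $\wedge$, and both successor-direction predicates $C_{0},C_{1}$, such that the color of $h(\bar{\eta})$ depends only on the quantifier-free $L_{\alpha}$-type of $\bar{\eta}$. This can be obtained by iterating the monochromatic-subtree argument of Lemma~\ref{monochrom lemma} (which already produces embeddings satisfying $h(\eta)\coc\langle d\rangle\unlhd h(\eta\coc\langle d\rangle)$, so the $C_{d}$'s are respected automatically), inducting on the tuple size $n$. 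At the inductive step one prunes inside both successor cones of each wedge point so as to absorb the extra combinatorial information about the direction in which each later tuple entry descends.

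\textbf{Step 2 (compactness).} With the partition theorem in hand, I would build $\la b_{\eta}\ra$ by a standard diagram/compactness argument. Introduce fresh constants $\{c_{\eta}:\eta\in\tree\}$ and consider the partial type $\Gamma$ asserting that (a) $\bar{c}_{\bar{\eta}}\equiv\bar{c}_{\bar{\eta}'}$ whenever $\bar{\eta}\approx_{\alpha}\bar{\eta}'$, and (b) for every finite $\Delta\subseteq\mathcal{L}$ and every $\wedge$-closed $\bar{\eta}$, there exists $\bar{\nu}\approx_{\alpha}\bar{\eta}$ with $\bar{c}_{\bar{\eta}}\equiv_{\Delta}\bar{a}_{\bar{\nu}}$. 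Finite consistency of $\Gamma$ reduces to Step~1: coloring the relevant $n$-tuples by their $\Delta$-type in the original tree, the embedding $h$ yields $\la a_{h(\eta)}\ra$ whose $\Delta$-types on $\wedge$-closed tuples are constant on $\approx_{\alpha}$-classes and are, by construction, already realized among the $\bar{a}_{\bar{\nu}}$'s. Any realization of $\Gamma$ gives the desired $\la b_{\eta}\ra$.

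\textbf{Main obstacle.} The hard part is Step~1. Classical tree-Ramsey machinery (Halpern--L\"auchli, Milliken) naturally produces embeddings that preserve only $\unlhd$ and $\wedge$, whereas here we must carry both successor-direction predicates $C_{0}$ and $C_{1}$ through the induction on $n$. Maintaining direction-preservation while simultaneously pruning inside both child cones of every wedge point is exactly the bookkeeping carried out in \cite[Proposition~2.3]{KK}. Once that partition theorem is in hand, Step~2 is a routine compactness argument directly parallel to the proof of Fact~\ref{modeling}.
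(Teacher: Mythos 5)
The paper does not actually prove this statement; it is imported verbatim as a Fact from \cite{KK}, so the only thing to measure your proposal against is whether it would itself constitute a proof. Your translation of $\approx_{\alpha}$ on $\wedge$-closed tuples into quantifier-free types in $\{\unlhd,\wedge,C_0,C_1\}$ is correct, and the compactness scaffolding of Step 2 is the standard one. The problem is Step 1: the partition theorem you formulate there is false. Color each $\wedge$-closed triple $\la\xi,\eta,\nu\ra$ with $\xi=\eta\wedge\nu$, $\eta\perp\nu$, by whether $l(\eta)<l(\nu)$, $l(\eta)=l(\nu)$, or $l(\eta)>l(\nu)$. Any injective map $h$ preserving $\unlhd$ strictly increases length along $\lhd$, so $l(h(\la 0^k\ra))\to\infty$ and $l(h(\la 1^k\ra))\to\infty$ as $k\to\infty$, while $l(h(\la 0\ra))$ and $l(h(\la 1\ra))$ are fixed. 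Hence among the image triples $h(\la\emptyset,\la 0^k\ra,\la 1\ra\ra)$ and $h(\la\emptyset,\la 0\ra,\la 1^k\ra\ra)$ — all of whose preimages are pairwise $\approx_{\alpha}$-equivalent — both colors ``$>$'' and ``$<$'' occur for large $k$. So no embedding, however it is pruned, can make the color depend only on the quantifier-free $L_{\alpha}$-type, and your reduction collapses at its first step.

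This is not a bookkeeping issue but the heart of the matter, and your ``main obstacle'' paragraph misidentifies it: preserving the successor-direction predicates is easy (the embeddings of Lemma \ref{monochrom lemma}, and strong subtrees in Milliken's sense, already do this), whereas the real difficulty is that $\approx_{\alpha}$ forgets all level/length information, which no subfamily of the form $\la a_{h(\eta)}\ra$ can erase. Consequently the tree $\la b_{\eta}\ra$ cannot in general be found inside the original parameters; it must be produced by compactness, with one $\Delta$-type assigned coherently to each $\approx_{\alpha}$-class (realized by tuples of $\la a_\eta\ra$ in some fixed level pattern), and the genuine content of \cite{KK} (and of \cite{KKS}, \cite{TT}) is a Milliken/Halpern--L\"auchli or Erd\H{o}s--Rado style homogenization per similarity type \emph{including} the level pattern, followed by an argument eliminating the dependence on that pattern. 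Your sketch omits this elimination step entirely, asserts that the tuple-coloring theorem follows by ``iterating'' the node-coloring pigeonhole of Lemma \ref{monochrom lemma} (it does not; passing from nodes to $n$-tuples is exactly where Halpern--L\"auchli-level input is needed), and at the decisive moment defers to ``the bookkeeping carried out in \cite[Proposition~2.3]{KK}'', which makes the proposal circular as a proof of Fact \ref{a}.
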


In order to make proof shorter we introduce some notations.

\begin{nota}
\begin{enumerate}
\item[(i)] For each $\eta\in\tree$, $l(\eta)$ denotes the domain of $\eta$ ({\it{i.e.}}, the length of $\eta$).
\item[(ii)] For each $\eta\in\tree$ with $l(\eta)>0$, $\eta^-$ denotes $\eta_{\res l(\eta)-1}$ and $t(\eta)$ denotes $\eta(l(\eta)-1)$.
\item[(iii)] For $\bar{\eta}=\la \eta_0,...,\eta_n\ra$, $\cl(\bar{\eta})$ denotes $\la \eta_0\wedge \eta_0 , ... , \eta_0\wedge \eta_n\ra \coc ... \coc \la\eta_n\wedge\eta_0,...,\eta_n\wedge\eta_n\ra$.
\item[(iv)] $\eta$ and $\nu$ are said to be incomparable (denoted by $\eta\perp\nu$) if $\eta\not\tri\nu$ and $\nu\not\tri\eta$.
\end{enumerate}
\end{nota}

Note that $\eta=\eta^-\coc t(\eta)$ for all $\eta$ with $l(\eta)>0$. The following remarks will also be useful.

\begin{rmk}\label{b}
Suppose $\la\eta_0,...,\eta_n\ra\approx_{\gamma}\la\nu_0,...,\nu_n\ra$. Then it follows that
\begin{enumerate}
\item[(i)] $\eta_i\wedge\eta_j=\eta_k$ if and only if $\nu_i\wedge\nu_j=\nu_k$,
\item[(ii)] ${\eta_i^-}{\tri\eta_j^-}$ if and only if $\nu_i^-{\tri\nu_j^-}$,
\item[(iii)] if $\eta_i \perp \eta_j$ then $\eta_i^-\wedge\eta_j^-=\eta_i\wedge\eta_j$, 
\item[(iv)] $\eta_i\coc\la d\ra\tri \eta_j^-$ if and only if $\nu_i\coc\la d \ra\tri\nu_j^-$,
\item[(v)] $\eta_i^-\coc\la d\ra\tri \eta_j^-$ if and only if $\nu_i^-\coc\la d \ra\tri\nu_j^-$,
\end{enumerate}
for all $i, j, k\leq n$ and $d\leq 1$.
\end{rmk}

\begin{lemma}\label{gamma}
Suppose $\varphi(x,\bar{y})$ witnesses SOP$_1$. Then there exists a $\gamma$-indiscernible tree $\la d_{\eta}\ra_{\eta\in \tree}$ which witnesses SOP$_1$ with $\varphi$.
\begin{proof}
Suppose $\varphi(x,\bar{y})$ witnesses SOP$_1$ with $\la a_{\eta}\ra_{\eta\in \tree}$. 
For each $\eta\in\tree$, put $b_\eta = a_{\eta\coc \lor}\coc a_{\eta\coc\llr}$. 
By Fact \ref{a}, there exists an $\alpha$-indiscernible $\la c_{\eta} \ra_{\eta\in\tree}$ such that for any $\wedge$-closedv  $\bar{\eta}$ and finite subset $\Delta$ of $\mathcal{L}$-formulas, there exists $\bar{\nu}$ such that $\bar{\nu}\approx_{\alpha}\bar{\eta}$ and $\bar{b}_{\bar{\nu}}\equiv_{\Delta}\bar{c}_{\bar{\eta}}$.
Note that $c_{\eta}$ is of the form $ c_{\eta}^0\coc c_{\eta}^1$ where $|c_{\eta}^0|=|c_{\eta}^1|= |\bar{y}|$ for each $\eta\in\tree$.
 For each $\eta\in\tree$ with $l(\eta)\geq 1$, we define $d'_{\eta}$ by
\[
  d'_{\eta} = \left\{
     \begin{array}{ll}
       c_{\eta^{-}}^0 & \text{ if }\,\,\,t(\eta)=0\\
       c_{\eta^{-}}^1 & \text{ if }\,\,\,t(\eta)=1\\
     \end{array}
   \right.
\]
and put $d_{\eta} = d'_{\lor \coc \eta}$ for each $\eta\in\tree$.
We show that $\varphi$ witnesses SOP$_1$ with $\la d_{\eta}\ra_{\eta\in\tree}$ and $\la d_{\eta}\ra_{\eta\in\tree}$ is $\gamma$-indiscernible.
\begin{claim1}
For all $\eta,\nu\in\tree$, $\{\varphi(x,d_{\eta\coc\llr}),\varphi(x,d_{\eta\coc\lor\coc\nu})\}$ is inconsistent.
\begin{proof}
Define $\psi_0$ and $\psi_1$ by
\[
\begin{array}{ll}
\psi_0(x;\bar{y_0},\bar{y_1})=\varphi(x,\bar{y_0})\wedge \bar{y_1}=\bar{y_1}, \\
\psi_1(x;\bar{y_0},\bar{y_1})=\varphi(x,\bar{y_1})\wedge \bar{y_0}=\bar{y_0}.
\end{array}   
\]

Suppose $\nu=\lr$.
Clearly $\la\lor\coc\eta\ra$ is $\wedge$-closed. So there exists $\mu\in\tree$ such that $\mu\approx_{\alpha}\lor\coc\eta$ and $b_\mu\equiv_\Delta c_{\lor\coc\eta}$, where ${\Delta}{=}{\{\psi_0,\psi_1\}}$.
Since $\{\varphi(x,a_{\mu\coc\llr}),$ $\varphi(x,a_{\mu\coc\lor})\}$ is inconsistent, it follows that $\{\psi_1(x,b_\mu),$ $\psi_0(x,b_\mu)\}$ is inconsistent, $\{\psi_1(x,c_{\lor\coc\eta}),$ 
$\psi_0(x,c_{\lor\coc\eta})\}$ is inconsistent, $\{\varphi(x,c_{\lor\coc \eta}^{1}),$ $\varphi(x,c_{\lor\coc \eta}^{0})\}$ is inconsistent, $\{\varphi(x,d'_{\lor\coc\eta\coc\llr}),$ $\varphi(x,d'_{\lor\coc\eta\coc\lor})\}$ is inconsistent, and hence $\{\varphi(x,d_{\eta\coc\llr}),$ $\varphi(x,d_{\eta\coc \lor\coc\nu})\}$ is inconsistent. 

Now we suppose $\nu\neq\lr$. 
Since $\la\lor\coc\eta,\lor\coc\eta\coc\lor\coc\nu^- \ra$ is $\wedge$-closed, there exists $\mu, \lambda\in\tree$ such that 
\[ 
\begin{array}{r@{\,\,}l}
\la \mu,\lambda \ra & \approx_{\alpha}\la \lor\coc\eta,\lor\coc\eta\coc\lor\coc\nu^- \ra, \\
b_{\mu}b_{\lambda} & \equiv_{\Delta} c_{\lor\coc\eta}c_{\lor\coc\eta\coc\lor\coc\nu^-}.
\end{array}
\]
Then $\mu\coc\lor\tri\lambda\tri\lambda\coc t(\nu)$ by $\approx_{\alpha}$. So $\{\varphi(x,a_{\mu\coc\llr}),\varphi(x,a_{\lambda\coc t(\nu)})\}$ is inconsistent. Thus $\{\psi_1(x,b_\mu),$ $\psi_{t(\nu)}(x,b_{\lambda})\}$ is inconsistent, $\{\psi_1(x,c_{ {\lor}{\coc}{\eta}}),$ $\psi_{t(\nu)}(x,c_{{\lor}{\coc}{\eta}{\coc}{\lor}{\coc}{\nu}^-})\}$ is inconsistent, and hence $\{\varphi(x,d_{\eta\coc\llr}),$ $\varphi(x,d_{\eta\coc\lor\coc\nu})\}$ is inconsistent.
\end{proof}
\end{claim1}
\begin{claim2}
For all $\eta\in\tree$, $\{\varphi(x,d_{\eta\res n}):n\in\omega\}$ is consistent.
\begin{proof}
Since $d_\lr=c^0 _\lr$ and $d_{\eta\res(n+1)}=c^{\eta(n)}_{\lor\coc({\eta\res n})}$ for all $n\in\omega$,
it is enough to show that $\{\psi_0(x,c_{\lr})\}\cup \{\psi_{\eta(n)}(x,c_{\lor\coc(\eta\res n)}):n\in\omega\}$ is consistent. We use compactness. Choose any finite subset of $\{\psi_0(x,c_{\lr})\}$ $\cup$ $ \{\psi_{\eta(n)}(x,c_{\lor\coc(\eta\res n)}):n\in\omega\}$. We may assume that this subset is of the form 
\[
\{\psi_0(x,c_{\lr}),\psi_{\eta(n_1)}(x,c_{\lor\coc(\eta\res n_1)}), ... , \psi_{\eta(n_m)}(x,c_{\lor\coc(\eta\res n_m)})\}
\] 
for some $m\in\omega$ and $n_1 < ... < n_m<\omega$. Clearly $\la\lr,\lor\coc(\eta\res n_1) , ... ,\lor\coc(\eta\res n_m)\rangle$ is $\wedge$-closed. So there exists $\nu_0 , \nu_1 , ..., \nu_m \in\tree$ such that 
\[
\begin{array}{r@{\,\,}l}
\la \nu_0, \nu_1, ..., \nu_m\ra & \approx_{\alpha} \la\lr,\lor\coc(\eta\res n_1),...,\lor\coc(\eta\res n_m) \ra\, \\
b_{\nu_0}b_{\nu_1}...b_{\nu_m} & \equiv_{\Delta} c_{\lr}c_{\lor\coc(\eta\res n_1)} ... c_{\lor\coc(\eta\res n_m)},
\end{array}
\]
where $\Delta=\{\psi_0,\psi_1\}$. Since
\begin{align*}
\lr\coc\lor &\tri \lor\coc(\eta\res{n_1})\\ 
\lor\coc(\eta\res{n_1})\coc\la \eta(n_1)\ra &\tri \lor\coc(\eta\res{n_2}) \\
& \cdots\\
\lor\coc(\eta\res{n_{m-1}})\coc\la\eta(n_{m-1})\ra &\tri \lor\coc(\eta\res{n_m}),
\end{align*}
we have 
\[
\nu_0 \coc\lor\tri\nu_1 \coc\eta(n_1)\tri ...\tri\nu_m \coc \eta(n_m)
\]
by $\approx_{\alpha}$. Thus
\[
\{\varphi(x,a_{\nu_0 \coc \lor}),\varphi(x,a_{\nu_1 \coc \eta(n_1)}),...,\varphi(x,a_{\nu_m \coc\eta(n_m)})\}
\]
is consistent and hence
\[
\{\psi_0 (x,b_{\nu_0}), \psi_{\eta(n_1)}(x,b_{\nu_1}), ..., \psi_{\eta(n_m)}(x,b_{\nu_m})\}
\]
is consistent. Since $b_{\nu_0}b_{\nu_1}...b_{\nu_m} \equiv_{\Delta} c_{\lr}c_{\lor\coc(\eta\res n_1)} ... c_{\lor\coc(\eta\res n_m)}$,
\[
\{\psi_0(x,c_{\lr}),\psi_{\eta(n_1)}(x,c_{\lor\coc(\eta\res n_1)}), ... , \psi_{\eta(n_m)}(x,c_{\lor\coc(\eta\res n_m)})\}
\]
is also consistent. By compactness, $\{\psi_0(x,c_{\lr})\}\cup \{\psi_{\eta(n)}(x,c_{\lor\coc(\eta\res n)}):n\in\omega\}$ is consistent.
\end{proof}
\end{claim2}
So $\varphi$ witnesses SOP$_1$ with $\la d_{\eta}\ra_{\eta\in\tree}$. 

Now we prove that $\la d_{\eta}\ra_{\eta\in\tree}$ is $\gamma$-indiscernible. Suppose that $\la\eta_0, ... ,\eta_n\ra \approx_{\gamma}\la\nu_0,...,\nu_n\ra$. For each $i\leq n$, let $\sigma_i=\lor\coc\eta_i$ and $\tau_i=\lor\coc\nu_i$. It is enough to show that
\[
d'_{\sigma_0}...d'_{\sigma_n}{\equiv} \,\, d'_{\tau_0}...d'_{\tau_n}
\] 
by the choice of $\la d_\eta \ra_{\eta\in\tree}$. First, we show $\la\sigma_0^-,...,\sigma_n^-\ra\approx_\alpha \la\tau_0^-,...,\tau_n^-\ra$. Note that $\la\sigma_0^-,...,\sigma_n^-\ra$ and $\la\tau_0^-,...,\tau_n^-\ra$ may not be $\wedge$-closed. So we prove $\cl(\la\sigma_0^-,...,\sigma_n^-\ra)\approx_{\alpha}\cl(\la\tau_0^-,...,\tau_n^-\ra)$. Note also that $\la\sigma_0,...,\sigma_n\ra$ and $\la\tau_0,...,.\tau_n\ra$ are $\gamma$-equivalent and still $\wedge$-closed. It allows us to use Remark \ref{b}. We will use Remark \ref{b} frequently without much mention.
\begin{claim3}
$\cl(\la\sigma_0^-,...,\sigma_n^-\ra)\approx_{\alpha}\cl(\la\tau_0^-,...,\tau_n^-\ra)$.
\begin{proof}
{\textbf{Subclaim 1.}} $\sigma_i^-\wedge\sigma_j^-\tri\sigma_k^-\wedge\sigma_l^-$ if and only if $\tau_i^-\wedge\tau_j^-\tri\tau_k^-\wedge\tau_l^-$ for all $i,j,k,l\leq n$.
\begin{proof}
Suppose $\sigma_i^-\wedge\sigma_j^-\tri\sigma_k^-\wedge\sigma_l^-$. It is enough to show that $\tau_i^-\wedge\tau_j^-\tri\tau_k^-\wedge\tau_l^-$.
\begin{case1} Then $\tau_i\perp\tau_j$. By Remark \ref{b} we have $\sigma_i\wedge\sigma_j=\sigma_i^-\wedge\sigma_j^-\tri\sigma_k^-\wedge\sigma_l^-\tri\sigma_k,\sigma_l$ and by $\approx_\gamma$, $\tau_i\wedge\tau_j\tri\tau_k,\tau_l$. If $\tau_i\wedge\tau_j=\tau_k$, then $\sigma_i\wedge\sigma_j=\sigma_k$ and hence $\sigma_k^-\wedge\sigma_l^-\tri\sigma_k^-\trn\sigma_i\wedge\sigma_j=\sigma_i^-\wedge\sigma_j^-$, it is a contradiction. Thus $\tau_i\wedge\tau_j\trn\tau_k,\tau_l$ and hence $\tau_i\wedge\tau_j\tri\tau_k^-,\tau_l^-$. So we have $\tau_i^-\wedge\tau_j^-=\tau_i\wedge\tau_j\tri\tau_k^-\wedge\tau_l^-$.
\end{case1}
\noindent {\bf{Case \,2}\, [$\sigma_i\,\not\perp\sigma_j$].}
We may assume $\sigma_i\tri\sigma_j$. Then $\sigma_i^-\tri\sigma_j^-$. Thus $\sigma_i^-=\sigma_i^-\wedge\sigma_j^-\tri\sigma_k^-\wedge\sigma_l^-\tri\sigma_k^-,\sigma_l^-$. By Remark {}\ref{b}(ii) and $\approx_{\gamma}$, we have $\tau_i^-\wedge\tau_j^-=\tau_i^-\tri\tau_k^-,\tau_l^-$. Thus $\tau_i^-\wedge\tau_j^-\tri\tau_k^-\wedge\tau_l^-$. This proves Subclaim 1.
\end{proof}
\begin{subclaim2}
$(\sigma_i^-\wedge\sigma_j^-)\coc\la d\ra \tri\sigma_k^-\wedge\sigma_l^-$ if and only if $(\tau_i^-\wedge\tau_j^-)\coc\la d\ra \tri\tau_k^-\wedge\tau_l^-$, for all $i,j,k,l\leq n$ and $d\leq 1$.
\begin{proof}
Suppose $(\sigma_i^-\wedge\sigma_j^-)\coc\la d \ra\tri\sigma_k^-\wedge\sigma_l^-$. Without loss of generality, we may assume $d=0$ and it suffices to show that $(\tau_i^-\wedge\tau_j^-)\coc\lor\tri\tau_k^-\wedge\tau_l^-$.
\begin{case1}
Then $\sigma_i^-\wedge\sigma_j^-=\sigma_i\wedge\sigma_j$ and $\tau_i^-\wedge\tau_j^-=\tau_i\wedge\tau_j$. Since $\langle\sigma_0,...,\sigma_n\rangle$ is $\wedge$-closed, there exists $m\leq n$ such that $\sigma_i\wedge\sigma_j=\sigma_m$ and by Remark \ref{b}(i), $\tau_i\wedge\tau_j=\tau_m$. Since $\sigma_m\coc\lor=(\sigma_i\wedge\sigma_j)\coc\lor=(\sigma_i^-\wedge\sigma_j^-)\coc\lor\tri\sigma_k^-\wedge\sigma_l^-\tri\sigma_k^-,\sigma_l^-$, it follows that $\tau_m\coc\lor\tri\tau_k^-,\tau_l^-$ by Remark \ref{b}(iv). Since $\tau_i^-\wedge\tau_j^-=\tau_i\wedge\tau_j=\tau_m$, we have $(\tau_i^-\wedge\tau_j^-)\coc\lor\tri\tau_k^-\wedge\tau_l^-$.
\end{case1}
\noindent {\bf{Case \,2}\, [$\sigma_i\,\not\perp\sigma_j$].} We may assume $\sigma_i\tri\sigma_j$. Then $\sigma_i^-\tri\sigma_j^-$ and hence $\sigma_i^-\coc\lor=(\sigma_i^-\wedge\sigma_j^-)\coc\lor\tri\sigma_k^-\wedge\sigma_l^-\tri\sigma_k^-,\sigma_l^-$. Thus $\tau_i^-\coc\lor\tri\tau_k^-,\tau_l^-$ by Remark \ref{b}(v). Since $\tau_i\tri\tau_j$ by $\approx_{\gamma}$, we have $\tau_i^-\tri\tau_j^-$ and hence $(\tau_i^-\wedge\tau_j^-)\coc\lor=\tau_i^-\coc\lor\tri\tau_k^-\wedge\tau_l^-$. This proves Subclaim 2.
\end{proof}
\end{subclaim2} 
Clearly $\cl(\la\sigma_0^-,...,\sigma_n^-\ra)$ and $\cl(\la\tau_0^-,...,\tau_n^-\ra)$ are $\wedge$-closed. Together with Subclaim 1 and 2, it proves Claim 3.
\end{proof}
\end{claim3}
By $\alpha$-indiscernibility of $\la c_\eta\ra_{\eta\in\tree}$, we have $\bar{c}_{\cl(\la\sigma_0^-,...,\sigma_n^-\ra)}\equiv\bar{c}_{\cl(\la\tau_0^-,...,\tau_n^-\ra)}$. In particular, we have $c_{\sigma_0^-}...c_{\sigma_n^-}\equiv c_{\tau_0^-}...c_{\tau_n^-}$. By definition of $\la d'_{\eta}\ra_{\eta\in\tree}$,
\[
d'_{\sigma_0^-\coc\lor}d'_{\sigma_0^-\coc\llr}...d'_{\sigma_n^-\coc\lor}d'_{\sigma_n^-\coc\llr}\,{\equiv}\,\,d'_{\tau_0^-\coc\lor}d'_{\tau_0^-\coc\llr}...d'_{\tau_n^-\coc\lor}d'_{\tau_n^-\coc\llr}.
\]
For each $i\leq n$ and $d\leq 1$, let $\xi_{2i+d}=\sigma_i^- \coc\la d \ra$ and $\zeta_{2i+d}=\tau_i^- \coc\la d \ra$. Then we have
\[
d'_{\xi_0}...d'_{\xi_{2n+1}} \equiv d'_{\zeta_0}...d'_{\zeta_{2n+1}}.
\]
Note that in general, if $m_{\xi_0}...m_{\xi_k}\equiv m_{\zeta_0}...m_{\zeta_k}$ and $i_0 < ... <i_e\leq k$, then  $m_{\xi_{i_0}}...m_{\xi_{i_e}}\equiv m_{\zeta_{i_0}}...m_{\zeta_{i_e}}$.
Since $\la\sigma_0, ... ,\sigma_n\ra \approx_{\gamma}\la\tau_0,...,\tau_n\ra$, we have $2i+t(\sigma_i)=2i+t(\tau_i)$ for all $i\leq n$. Thus 
\[
\begin{array}{rl}
d'_{\sigma_0}...d'_{\sigma_n} 
=d'_{\sigma_0^-\coc  t(\sigma_0)}...d'_{\sigma_n^-\coc  t(\sigma_n)}
=d'_{\xi_{t(\sigma_0)}}...d'_{\xi_{2n+t(\sigma_n)}} 
\equiv & d'_{\zeta_{t(\tau_0)}}...d'_{\zeta_{2n+t(\tau_n)}}\\ 
     = & d'_{\tau_0^-\coc  t(\tau_0)}...d'_{\tau_n^-\coc  t(\tau_n)} \\
     = & d'_{\tau_0}...d'_{\tau_n}
\end{array}
\]
as desired. This shows that $\la d_{\eta}\ra_{\eta\in\tree}$ is $\gamma$-indiscernible, and completes proof of Lemma \ref{gamma}.
\end{proof}
\end{lemma}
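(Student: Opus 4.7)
The plan is to construct $\la d_\eta\ra$ by pre-encoding the ``left vs.\ right child'' information into single nodes, applying the modeling property for $\alpha$-indiscernibility (Fact \ref{a}, which is all we get for free), and then decoding. The reason this should work is that $\gamma$-indiscernibility differs from $\alpha$-indiscernibility only in that it additionally remembers whether a node is a $\lor$- or $\llr$-successor, or equal to the $\lor$- or $\llr$-child of another node on the list. If each node of our intermediate tree already \emph{is} a packaged pair ``(left child, right child) of $\eta$,'' then $\alpha$-indiscernibility on the packaged tree, combined with a decoding that reads off the direction bit, should upgrade to $\gamma$-indiscernibility on the decoded tree.

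Concretely, starting with a SOP$_1$-witness $\la a_\eta\ra_{\eta\in\tree}$, I would set $b_\eta = a_{\eta\coc\lor}\coc a_{\eta\coc\llr}$, apply Fact \ref{a} to obtain an $\alpha$-indiscernible $\la c_\eta\ra_{\eta\in\tree}$ based on $\la b_\eta\ra$, split each $c_\eta$ as $c_\eta^0\coc c_\eta^1$, define $d'_\eta = c_{\eta^-}^{t(\eta)}$ for nonempty $\eta$, and then $d_\eta = d'_{\lor\coc\eta}$; the final shift by $\lor$ makes the root harmless, since every $d_\eta$ now comes from a node with a predecessor. To verify that $\varphi$ still witnesses SOP$_1$ with $\la d_\eta\ra$, I would introduce auxiliary formulas $\psi_d(x;\bar y_0,\bar y_1) = \varphi(x,\bar y_d)$ (padded by a trivial equation on the other coordinate) so that $\psi_d(x,b_\eta) = \varphi(x,a_{\eta\coc\la d\ra})$. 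Then both branch-consistency and SOP$_1$-style inconsistency for $\la d_\eta\ra$ follow by choosing a suitable finite $\wedge$-closed index set, using the modeling property to transfer from $\la c_\eta\ra$ back to $\la b_\eta\ra$, and invoking the original property of $\la a_\eta\ra$ (with two cases depending on whether $\nu=\lr$ or $\nu\neq\lr$, the latter requiring a two-element $\wedge$-closed set).

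The main obstacle is the $\gamma$-indiscernibility of $\la d_\eta\ra$. Given $\la\eta_0,\dots,\eta_n\ra \approx_\gamma \la\nu_0,\dots,\nu_n\ra$, setting $\sigma_i=\lor\coc\eta_i$ and $\tau_i=\lor\coc\nu_i$, I must show $d'_{\sigma_0}\cdots d'_{\sigma_n} \equiv d'_{\tau_0}\cdots d'_{\tau_n}$. The reduction I would carry out is: since $d'_{\sigma_i}$ is the $t(\sigma_i)$-half of $c_{\sigma_i^-}$ and $\gamma$-equivalence forces $t(\sigma_i)=t(\tau_i)$ (by conditions (vi) and (vii)), it suffices to show $c_{\sigma_0^-}\cdots c_{\sigma_n^-}\equiv c_{\tau_0^-}\cdots c_{\tau_n^-}$, and for this, by $\alpha$-indiscernibility of $\la c_\eta\ra$, it suffices to show
\[
\cl(\la\sigma_0^-,\dots,\sigma_n^-\ra)\;\approx_\alpha\;\cl(\la\tau_0^-,\dots,\tau_n^-\ra).
\]
This is the real combinatorial heart. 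The issue is that truncation $\sigma\mapsto\sigma^-$ can alter meet and comparability patterns, so I would split into the two cases $\sigma_i\perp\sigma_j$ (where $\sigma_i^-\wedge\sigma_j^-=\sigma_i\wedge\sigma_j$ belongs to the original list by $\wedge$-closure) and $\sigma_i\not\perp\sigma_j$ (where $\sigma_i^-\unlhd\sigma_j^-$ holds whenever $\sigma_i\unlhd\sigma_j$). In each case, conditions (i)--(iii) give the meet and partial-order structure, while conditions (iv)--(vii) are exactly what is needed to preserve the immediate-successor relations $\cdot\coc\la d\ra\unlhd\cdot$ between the truncated nodes.

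I expect the bookkeeping in this last step to be the hardest part: one really has to verify two subclaims, namely that $\sigma_i^-\wedge\sigma_j^-\unlhd\sigma_k^-\wedge\sigma_l^-$ iff the same holds for the $\tau$'s, and that $(\sigma_i^-\wedge\sigma_j^-)\coc\la d\ra\unlhd\sigma_k^-\wedge\sigma_l^-$ iff the same holds for the $\tau$'s, each requiring the perpendicular/comparable case split above. Once these are in hand, $\cl(\bar\sigma^-)\approx_\alpha\cl(\bar\tau^-)$ drops out, and a projection argument picks out $d'_{\sigma_i}$ vs.\ $d'_{\tau_i}$ according to the matched direction bits $t(\sigma_i)=t(\tau_i)$, completing the proof.
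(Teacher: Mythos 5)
Your proposal is correct and follows essentially the same route as the paper: packaging the two children into $b_\eta=a_{\eta\coc\lor}\coc a_{\eta\coc\llr}$, applying Fact \ref{a}, decoding via $d'_\eta=c_{\eta^-}^{t(\eta)}$ and $d_\eta=d'_{\lor\coc\eta}$, verifying SOP$_1$ with the padded formulas $\psi_0,\psi_1$, and reducing $\gamma$-indiscernibility to $\cl(\la\sigma_0^-,\dots,\sigma_n^-\ra)\approx_\alpha\cl(\la\tau_0^-,\dots,\tau_n^-\ra)$ via the same two subclaims with the perpendicular/comparable case split, then matching the direction bits $t(\sigma_i)=t(\tau_i)$. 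This is exactly the paper's argument, so no further comparison is needed.
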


Note that even if $i_0 < ... <i_e\leq k$, $j_0 < ... <j_e\leq k$ and $m_{\xi_0}...m_{\xi_k}\equiv m_{\zeta_0}...m_{\zeta_k}$, it is not sure that $m_{\xi_{i_0}}...m_{\xi_{i_e}}\equiv m_{\zeta_{j_0}}...m_{\zeta_{j_e}}$. So if we want to say $d'_{\sigma_0}...d'_{\sigma_n}\equiv d'_{\tau_0}...d'_{\tau_n}$ from 
\[
d'_{\sigma_0^-\coc\lor}d'_{\sigma_0^-\coc\llr}...d'_{\sigma_n^-\coc\lor}d'_{\sigma_n^-\coc\llr}\,{\equiv}\,\,d'_{\tau_0^-\coc\lor}d'_{\tau_0^-\coc\llr}...d'_{\tau_n^-\coc\lor}d'_{\tau_n^-\coc\llr}
\]
in the last paragraph of proof of Lemma \ref{gamma}, it must be guaranteed that $t(\sigma_i)=t(\tau_i)$ for each $i\leq n$. This is why we introduce $\approx_{\gamma}$ and find a $\gamma$-indiscernible witness of SOP$_1$ first, not directly find $\beta$-indiscernible one as in \cite{DS2}. The proof in \cite{DS2} tries to show directly (without using $\gamma$-indiscernibility) the existence of a $\beta$-indiscernible witness of SOP$_1$, so it ends incompletely due to the aforementioned problem.

We will need the following remark when we prove Theorem \ref{beta}.

\begin{rmk}\label{preserving map}
If $h:\tree \to \tree$ satisfies $h(\eta)\coc \la d\ra \tri h(\eta\coc \la d\ra)$ for all $\eta\in\tree$ and $d\leq 1$, then it satisfies
\begin{enumerate}
\item[(i)] $h(\eta)\tri h(\nu)$ if and only if $\eta \tri \nu$,
\item[(ii)] $h(\eta)\wedge h(\nu)=h(\eta \wedge \nu)$
\end{enumerate}
for all $\eta, \nu \in \tree$.
\end{rmk}

\begin{thm}\label{beta}
If $\varphi(x,y)$ witnesses SOP$_1$, then there exists a $\beta$-indiscernible tree $\la e_{\eta}\ra_{\eta\in\tree}$ which witnesses SOP$_1$ with $\varphi$.
\begin{proof}
By Lemma \ref{gamma}, there exists a $\gamma$-indiscernible tree $\la d_\eta\ra_{\eta\in\tree}$ which witnesses SOP$_1$ with $\varphi$. Define a map $h:\tree\to\tree$ by
\[
h(\eta)=\left\{
 \begin{array}{ll}
  \lr                             & \text{ if }\,\,\,\eta=\lr \\
  h(\eta^-)\coc\la 01 \ra & \text{ if }\,\,\,t(\eta)=0\\
  h(\eta^-)\coc\la 1 \ra  & \text{ if }\,\,\,t(\eta)=1,\\
 \end{array}
\right.
\]
and put $e_{\eta}=d_{h(\eta)}$ for each $\eta\in\tree$. 
\begin{claim1}
$\la e_{\eta}\ra_{\eta\in\tree}$ is $\beta$-indiscernible.
\begin{proof}
Let $\la \eta_0,...,\eta_n\ra \approx_{\beta} \la \nu_0,...,\nu_n\ra$. It is enough to show that $\la h(\eta_0),...,h(\eta_n)\ra\approx_{\gamma}\la h(\nu_0),...,h(\nu_n)\ra$. We recall Definition \ref{def of tree-indisc}. Note that $h$ satisfies the assumption of Remark \ref{preserving map}. Thus $h(\bar{\eta})$ and $h(\bar{\nu})$ satisfy (i), (ii) of Definition \ref{def of tree-indisc} by Remark \ref{preserving map}. To show (iii) we suppose $h(\eta_i)\coc \la d\ra \tri h(\eta_j)$ for some $d\leq 1$. Then $h(\eta_i)\trn h(\eta_j)$ hence $\eta_i\trn \eta_j$. If $\eta_i \coc \la (d-1)^2\ra \tri \eta_j$, then we have
\[
h(\eta_i)\coc \la 1-d \ra \tri h(\eta_i \coc \la 1-d\ra)\tri h(\eta_j) ,
\]
a contradiction. Thus $\eta_i\coc \la d\ra \tri \eta_j$. Since $\bar{\eta}\approx_{\beta}\bar{\nu}$, we have $\nu_i\coc \la d\ra \tri \nu_j$. Hence
\[
h(\nu_i)\coc \la d\ra \tri h(\nu_i \coc \la d\ra)\tri h(\nu_j),
\]
and therefore $h(\bar{\eta})$ and $h(\bar{\nu})$ satisfy (iii). Now we suppose $h(\eta_i)\coc\llr=h(\eta_j)$. Since $h$ is an injection, $\eta_i\coc\llr=\eta_j$. Since $\bar{\eta}\approx_{\beta}\bar{\nu}$, we have $\nu_i\coc\llr=\nu_j$. So $h(\nu_i)\coc\llr=h(\nu_j)$. This shows that $h(\bar{\eta})$ and $h(\bar{\nu})$ satisfy (iv).

By definition of $h$, there is no $\eta$ such that $t(h(\eta))=0$. So (v) is vacuously true. By the same reason, $h(\bar{\eta})$, $h(\bar{\nu})$ satisfy (vi) and (vii).
\end{proof}
\end{claim1}
\begin{claim2}
$\varphi$ witnesses SOP$_1$ with $\la e_{\eta}\ra_{\eta\in\tree}$.
\end{claim2}
\begin{proof}
Choose any $\eta\in {^{\omega}}2$. Then $\{ h(\eta\res n):n\in\omega\}$ is linearly ordered by $\tri$ by Remark \ref{preserving map}. Thus $\{\varphi(x,e_{\eta\res n}):n\in\omega\}=\{\varphi(x,d_{h(\eta\res n)}):n\in\omega\}$ is consistent. Choose any $\eta,\nu\in\tree$. Then 
\[
\{\varphi(x,e_{\eta\coc\lor\coc\nu}),\varphi(x,e_{\eta\coc\llr})\}
=\{\varphi(x,d_{h(\eta\coc\lor\coc\nu)}),\varphi(x,d_{h(\eta\coc\llr)})\}
\]
and $h(\eta)\coc\lor\tri h(\eta\coc\lor)\tri h(\eta\coc\lor\coc\nu)$ and $h(\eta\coc\llr)=h(\eta)\coc\llr$. Thus the set $\{\varphi(x,e_{\eta\coc\lor\coc\nu}),\varphi(x,e_{\eta\coc\llr})\}$ is inconsistent.
\end{proof}
This completes proof of Theorem \ref{beta}.
\end{proof}
\end{thm}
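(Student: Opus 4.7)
My plan is to leverage Lemma \ref{gamma}, which already produces a $\gamma$-indiscernible witness $\la d_\eta\ra_{\eta\in\tree}$ of SOP$_1$ with $\varphi$. Since $\gamma$-equivalence implies $\beta$-equivalence (conditions (v)--(vii) are imposed on top of (i)--(iv)), the tree $\la d_\eta\ra$ is \emph{not} automatically $\beta$-indiscernible: two tuples can be $\beta$-equivalent without being $\gamma$-equivalent. The strategy is therefore to transport $\la d_\eta\ra$ along a tree map $h\colon \tree\to\tree$ chosen so that whenever $\bar\eta\approx_\beta\bar\nu$ in the domain, $h(\bar\eta)\approx_\gamma h(\bar\nu)$ in the range, and then set $e_\eta := d_{h(\eta)}$.

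The cleanest way to make conditions (v), (vi), (vii) trivially transfer is to force no image $h(\eta)$ to end in $0$. I would define $h$ recursively by $h(\lr)=\lr$, $h(\eta\coc\lor)=h(\eta)\coc\la 0,1\ra$, and $h(\eta\coc\llr)=h(\eta)\coc\llr$: every nonempty image has last coordinate $1$. This $h$ satisfies $h(\eta)\coc\la d\ra\tri h(\eta\coc\la d\ra)$ for $d\leq 1$, so by Remark \ref{preserving map} it preserves $\tri$ and meets, and it is an injection. Consequently, if $\bar\eta\approx_\beta\bar\nu$ then $h(\bar\eta)$ and $h(\bar\nu)$ inherit $\wedge$-closure and conditions (i), (ii) directly. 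Condition (iv) transfers via injectivity of $h$. Conditions (v) and (vi) are vacuous because no $h(\eta_i)$ has last coordinate $0$, and (vii) follows because $h(\eta_i)\neq\lr$ iff $\eta_i\neq\lr$, a property preserved by $\beta$-equivalence.

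The main obstacle I anticipate is verifying condition (iii), i.e. that $h(\eta_i)\coc\la d\ra\tri h(\eta_j)$ iff $h(\nu_i)\coc\la d\ra\tri h(\nu_j)$. The argument I have in mind: if $h(\eta_i)\coc\la d\ra\tri h(\eta_j)$, then $h(\eta_i)\trn h(\eta_j)$, so $\eta_i\trn\eta_j$. If $\eta_i\coc\la 1{-}d\rangle\tri\eta_j$ held, one would get $h(\eta_i)\coc\la 1{-}d\ra\tri h(\eta_i\coc\la 1{-}d\ra)\tri h(\eta_j)$, contradicting $h(\eta_i)\coc\la d\ra\tri h(\eta_j)$. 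Hence $\eta_i\coc\la d\ra\tri\eta_j$, which transfers to $\nu$'s by $\beta$-equivalence, and re-applying $h(\nu_i)\coc\la d\ra\tri h(\nu_i\coc\la d\ra)$ yields the desired relation on $\nu$'s. The subtle point here is the dichotomy on $1{-}d$: it only goes through because the extensions $h(\eta_i)\coc\la 0\ra$ and $h(\eta_i)\coc\la 1\ra$ are handled uniformly by the recursion, not because one is the ``correct'' one.

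Finally I would verify that $\varphi$ still witnesses SOP$_1$ with $\la e_\eta\ra$. For any branch $\eta\in{}^{\omega}2$, the sequence $\la h(\eta\res n)\ra_{n<\omega}$ is linearly ordered by $\tri$, so $\{\varphi(x,e_{\eta\res n}):n<\omega\}=\{\varphi(x,d_{h(\eta\res n)}):n<\omega\}$ extends to a consistent branch type for $\la d_\eta\ra$. For the inconsistency clause, observe that $h(\eta\coc\llr)=h(\eta)\coc\llr$ and $h(\eta)\coc\lor\tri h(\eta\coc\lor)\tri h(\eta\coc\lor\coc\nu)$, so the pair $(h(\eta\coc\llr),h(\eta\coc\lor\coc\nu))$ matches the SOP$_1$ configuration in $\la d_\eta\ra$ and the inconsistency is inherited. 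This will complete the proof.
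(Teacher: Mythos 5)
Your proposal follows the paper's proof essentially verbatim: the same reduction to Lemma \ref{gamma}, literally the same map $h$ (your recursion $h(\eta\coc\lor)=h(\eta)\coc\la 0,1\ra$, $h(\eta\coc\llr)=h(\eta)\coc\llr$ is the paper's $h$), the same use of Remark \ref{preserving map}, the same dichotomy argument for condition (iii), the same vacuity argument for (v)--(vi), and the same transfer of the SOP$_1$ configuration via $h(\eta\coc\llr)=h(\eta)\coc\llr$ and $h(\eta)\coc\lor\tri h(\eta\coc\lor)$. The only soft spot is your justification of (vii) --- ``$\eta_i\neq\lr$ is preserved by $\beta$-equivalence'' is not literally true (e.g.\ $\la\lr\ra\approx_\beta\la\lor\ra$), but the paper's own proof glosses over exactly the same point, so this does not distinguish your argument from the reference one.
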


So if there exists a witness $\varphi$ of SOP$_1$, then we may choose $\beta$-indiscernible tree which witnesses SOP$_1$ with $\varphi$.

\section{Antichain tree property}\label{secAT}

In this section, we introduce a notion of tree property which is called {\it antichain tree property (ATP)} and explain how to construct an antichain tree in a SOP$_1$-NSOP$_2$ theory. In short, the concept of ATP can be regarded as the dual of the concept of SOP$_2$ in the following sense.

\begin{defn}\label{def of AT}
A tuple $\la \varphi(x,y), \la a_{\eta}\ra_{\eta\in\tree}\ra$ is called an {\it antichain tree} if for all $X\subseteq\tree$, $\{\varphi(x,a_{\eta}):\eta \in X\}$ is consistent if and only if $X$ is an antichain in $\tree$.

We say $\varphi$ has {\it antichain tree property (ATP)} if $\varphi$ forms an antichain tree with some $\la a_{\eta}\ra_{\eta\in\tree}$. We say $T$ has ATP if it has an ATP formula. We say $T$ is {\it NATP} if $T$ does not have ATP.
\end{defn}

Recall the definition of antichain in Section \ref{SOP2}. We say $X\subseteq\tree$ is an antichain if $X$ is pairwisely incomparable ({\it i.e.} $\eta\perp\nu$ for all $\eta,\nu\in X$). Therefore, we can write Definition \ref{def of AT} again as

\begin{defn}
$\la \varphi(x,y), \la a_{\eta}\ra_{\eta\in\tree}\ra$ is called an antichain tree if for all $X\subseteq\tree$, $\{\varphi(x,a_{\eta}):\eta \in X\}$ is consistent if and only if $X$ is pairwisely `incomparable'.

\end{defn} 

And we can write the definition SOP$_2$ again as

\begin{defn}
We say $\la \varphi(x,y), \la a_{\eta}\ra_{\eta\in\tree}\ra$ witnesses SOP$_2$ if for all $X\subseteq\tree$, $\{\varphi(x,a_{\eta}):\eta \in X\}$ is consistent if and only if $X$ is pairwisely `comparable'.
\end{defn}

In this sense, ATP can be thought of as having the dual characteristics to SOP$_2$.

If an antichain tree $\la \varphi(x,y),\paratr \ra$ is given, we can find a witness of SOP$_1$ and a witness of TP$_2$ by restricting the parameter part $\paratr$ as follows.

\begin{prop}\label{AT->SOP1}
If $\la \varphi(x,y), \la a_{\eta}\ra_{\eta\in\tree} \ra$ is an antichain tree, then $\varphi(x,y)$ witnesses SOP$_1$.
\begin{proof}
By compactnss, it is enough to show that for each $n\in\omega$, there exists $h_n:{^{n\geq}}2\to\tree$ such that 
\begin{itemize}
\item[(i)] $\{\varphi(x,b_{\eta\res i}):i\leq n\}$ is consistent for all $\eta\in{^n 2}$,
\item[(ii)] $\{\varphi(x,b_{\eta\coc\lor\coc\nu}),\varphi(x,b_{\eta\coc\llr})\}$ is inconsistent for all $\eta,\nu\in\tren$ with $\eta$ $\coc\lor$ $\coc\nu,$ $\eta$ $\coc\llr$ $\in {^{n\geq}}2$,
\end{itemize}
where $b_\eta=a_{h_n (\eta)}$ for each $\eta\in ^{n\geq}2$. We use induction on $n\in\omega$. Define $h_0:{^{0\geq}}2\to\tree$ by $h_0(\emptyset)=\llr$. For $n\in\omega$, assume such $h_n$ exists. Define $h_{n+1}:{^{n+1\geq}}2\to\tree$ by
\[
h_{n+1}(\eta)=\left\{
 \begin{array}{ll}
  \llr                       & \text{ if }\,\,\,\eta=\lr \\
  \la 011 \ra \coc h_n(\nu)  & \text{ if }\,\,\,\eta=\lor\coc\nu\text{ for some }\nu\in {^{n\geq}}2 \\
  \la 0   \ra \coc h_n(\nu)  & \text{ if }\,\,\,\eta=\llr\coc\nu\text{ for some }\nu\in {^{n\geq}}2.\\
 \end{array}
\right.
\]
For each $\eta\in \null^{n+1\ge}2$, let $b_\eta=a_{h_{n+1} (\eta)}$. Then for each $\eta\in \null^{n+1}2$, $\{h_{n+1}(\eta\res i):i\le n+1\}$ forms an antichain in $\null^{\omega>}2$ by the construction. Thus $\la b_\eta\ra_{\null^{n+1\ge}2}$ satisfies (i) together with $\varphi$. Similarly, we can prove that $\la b_\eta\ra_{\null^{n+1\ge}2}$ satisfies (ii) together with $\varphi$. So we can continue finding $\la h_n\ra_{n<\omega}$ satisfying (i) and (ii) and by compactness, $\varphi$ has SOP$_1$.
\end{proof}
\end{prop}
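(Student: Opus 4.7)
The plan is a compactness reduction. The antichain-tree hypothesis says that $\{\varphi(x,a_\eta):\eta\in X\}$ is consistent exactly when $X$ is an antichain in $\tree$, so the two clauses defining $\sopl$ translate into purely combinatorial conditions on an embedding. Thus it suffices to build, for each $n<\omega$, a map $h_n:\null^{n\geq}2\to\tree$ such that (a) the image of every maximal branch through $\null^{n\geq}2$ is a $\tri$-antichain in $\tree$, and (b) for all $\eta,\nu$ with $\eta\coc\lor\coc\nu,\,\eta\coc\llr\in\null^{n\geq}2$, the nodes $h_n(\eta\coc\llr)$ and $h_n(\eta\coc\lor\coc\nu)$ are $\tri$-comparable.

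I would define $h_n$ recursively by $h_n(\lr)=\llr$, $h_n(\lor\coc\nu)=\la 011\ra\coc h_{n-1}(\nu)$, and $h_n(\llr\coc\nu)=\la 0\ra\coc h_{n-1}(\nu)$. Unwinding the recursion, for $\eta=i_0\cdots i_{k-1}\in\null^{n\geq}2$ one obtains $h_n(\eta)=E(i_0)\coc\cdots\coc E(i_{k-1})\coc\llr$, where $E(0)=\la 011\ra$ and $E(1)=\la 0\ra$; write $E^*(\eta)$ for the prefix $E(i_0)\coc\cdots\coc E(i_{k-1})$.

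For (a), given $k<m$, the images $h_n(i_0\cdots i_{k-1})$ and $h_n(i_0\cdots i_{m-1})$ share the common prefix $E^*(i_0\cdots i_{k-1})$; the former then continues with the terminal digit $1$ (from the tail $\llr$), while the latter continues with $E(i_k)$, whose first digit is $0$. Hence they are $\tri$-incomparable. For (b), $h_n(\eta\coc\llr)=E^*(\eta)\coc\la 01\ra$ while $h_n(\eta\coc\lor\coc\nu)=E^*(\eta)\coc\la 011\ra\coc E^*(\nu)\coc\llr$, and $\la 01\ra\lhd\la 011\ra$ delivers the comparability. The only design decision is the encoding $E$: the images of $0$ and $1$ must agree on their first digit (so that the terminal $\llr$ appended at every node separates that node from its descendants along any branch), and $E(1)\coc\llr$ must be a proper initial segment of $E(0)$ (to force sibling comparability); the minimal choice $E(0)=\la 011\ra$, $E(1)=\la 0\ra$ meets both constraints, after which the verification is mechanical.
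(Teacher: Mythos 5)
Your proposal is correct and takes essentially the same approach as the paper: the identical compactness reduction to finite maps $h_n:{}^{n\geq}2\to{}^{\omega>}2$ and the identical recursion $h_n(\langle\rangle)=\langle 1\rangle$, $h_n(\langle 0\rangle^\frown\nu)=\langle 011\rangle^\frown h_{n-1}(\nu)$, $h_n(\langle 1\rangle^\frown\nu)=\langle 0\rangle^\frown h_{n-1}(\nu)$. The only difference is that you make explicit, via the closed form $h_n(\eta)=E^*(\eta)^\frown\langle 1\rangle$, the branch-antichain and sibling-comparability checks that the paper leaves as ``easy to show,'' and those checks are accurate.
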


Let us recall the definition of TP$_2$.

\begin{defn}
We say a formula $\varphi(x,y)$ has {\it tree property of the second kind (TP$_2$)} if there exists an array $\la a_{i,j}\ra_{i,j\in\omega}$ such that
$\{\varphi(x,a_{i,j}):j<\omega\}$ is 2-inconsistent for all $i\in\omega$, and $\{\varphi(x,a_{i,f(i)}):i\in\omega\}$ is consistent for all $f:\omega\to\omega$. We say a theory $T$ has TP$_2$ if there exists a formula having TP$_2$ modulo $T$. We say $T$ is {\it NTP$_2$} if it does not have TP$_2$.
\end{defn}

\begin{prop} \label{at->tp2}
If $\la \varphi(x,y), \la a_{\eta}\ra_{\eta\in\tree} \ra$ is an antichain tree, then $\varphi(x,y)$ witnesses TP$_2$.
\begin{proof}
By compactness, it is enough to show that for any $n\in\omega$, there exists $\la b_{i,j}\ra_{i,j<n}$ such that $\{\varphi(x,b_{i,j}):j<n\}$ is 2-inconsistent for each $i<n$, and $\{\varphi(x,b_{i,f(i)}):i<n\}$ is consistent for all $f:n \to n$. Fix $n\in\omega$. Choose any antichain $\{\eta_0,...,\eta_{n-1}\}$ in $\tree$. Define $h_n:n\times n\to \tree$ by
\[
h_n(i,j)=\eta_i\coc\la 0^j\ra.
\]
Put $b_{i,j}=a_{h_n(i,j)}$ for each $i,j<n$. Then $\{\varphi(x,b_{i,j}):j<n\}$ is 2-inconsistent for all $i<n$ and $\{\varphi(x,b_{i,f(i)}):i<n\}$ is consistent for all $f:n\to n$. 
\end{proof}
\end{prop}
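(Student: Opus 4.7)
The plan is to read off a TP$_2$ witness directly from the antichain tree, using two elementary features of $\tree$: it contains infinite antichains, and $\perp$ is preserved under extension. Rather than invoking compactness over finite antichains, I would work with a fixed infinite antichain in one step.

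First I would choose any infinite antichain $\{\eta_i : i<\omega\}\subseteq\tree$---for concreteness, $\eta_i=\la 1^i\ra\coc\lor$---and put $b_{i,j}=a_{\eta_i\coc\la 0^j\ra}$ for each $i,j<\omega$. For fixed $i$, the strings $\eta_i\coc\la 0^j\ra$ ($j<\omega$) form a $\tri$-chain, so any two of them are comparable; the antichain tree property then gives inconsistency of $\{\varphi(x,b_{i,j}),\varphi(x,b_{i,k})\}$ for all $j\neq k$, which is the required $2$-inconsistency of each row.

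For the path condition, given any $f\colon\omega\to\omega$, I would check that $\{\eta_i\coc\la 0^{f(i)}\ra : i<\omega\}$ is an antichain in $\tree$, since the defining property of an antichain tree then delivers consistency of $\{\varphi(x,b_{i,f(i)}):i<\omega\}$. The one ingredient needed is that $\perp$ is preserved under extension: if $\eta\perp\nu$, then $(\eta\coc\sigma)\wedge(\nu\coc\tau)=\eta\wedge\nu$, which is a proper initial segment of both $\eta$ and $\nu$ and hence of their extensions, so $\eta\coc\sigma\perp\nu\coc\tau$ for all $\sigma,\tau\in\tree$. Applying this to $\eta_i,\eta_j$ for $i\neq j$ produces the desired antichain.

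There is essentially no obstacle beyond this preservation lemma; the whole combinatorial content is that a single antichain in $\tree$ automatically generates an $\omega$-parameter family of antichains, one for each selector $f$, while vertical extensions down any one $\eta_i$ form chains. Both defining properties of TP$_2$ then fall out directly from the defining property of an antichain tree, so the only routine work is notational bookkeeping.
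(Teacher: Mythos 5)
Your proposal is correct and follows essentially the same construction as the paper: index the rows by an antichain $\{\eta_i\}$, form the columns by the extensions $\eta_i\coc\la 0^j\ra$, and use that these extensions are pairwise comparable within a row (giving $2$-inconsistency) but remain pairwise incomparable across rows (giving path consistency via the defining property of the antichain tree). The only difference is cosmetic: you work with an infinite antichain directly, whereas the paper uses finite $n\times n$ arrays built from size-$n$ antichains and then invokes compactness; both are fine since the antichain-tree condition applies to arbitrary subsets of $\tree$.
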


\begin{rmk}
Hence if a theory is ATP, then the theory is TP$_2$ and SOP$_1$. But the converse of each implication is not true in general. Since there are many examples of NSOP$_1$ theory which is TP$_2$ (for example, see \cite[Corollary 3.13, Proposition 3.14]{KR18}), having TP$_2$ does not imply having ATP in general. Similarly, having SOP$_1$ does not imply having ATP in general since there exists an SOP$_1$ theory which is NTP$_2$ (see \cite[Example 1.2]{CKS}). With the already well-known results (see \cite{Con}), we have the following diagram,
\begin{center}
\begin{tikzcd}
 & {\rm NIP}  \arrow{r} & {\rm NTP}_2\arrow{r} & {\rm NATP} \\ 
& {\rm stable} \arrow{r} \arrow{u} & {\rm simple}\arrow{r} \arrow{u} & {\rm NSOP}_1 \arrow{u} 
\end{tikzcd}
\end{center}
and each implication is proper. 
\end{rmk}

From now on we will construct an antichain tree under some classification theoretical hypothesis. 
We begin the construction with the following remark.

\begin{rmk} \label{remark47}
Let $\sigma_0,...,\sigma_n,\sigma_{n+1},\tau_0,...\tau_m,\tau'_0,...,\tau_m'\in\trel$ and assume that there exist $\eta,\nu,\theta_0,...,\theta_m\in\trel$ such that $\tau_i=\eta\coc\theta_i$ and $\tau'_i=\nu\coc\theta_i$ for each $i\leq m$. If
\begin{itemize}
\item[(i)] there exists $l\leq 1$ such that $\sigma_{n+1}\coc\la l\ra\tri\eta,\nu$,
\item[(ii)] for that $l$, no $i\leq n$ satisfies $\sigma_{n+1}\coc\la l\ra\tri\sigma_i$,
\item[(iii)] $\sigma_{n+1}\coc\llr\neq\eta$ and $\sigma_{n+1}\coc\llr\neq\nu$,
\end{itemize}
then ${\rm{cl}}(\la\sigma_0,...,\sigma_n,\sigma_{n+1},\tau_0,...,\tau_m\ra)\approx_{\beta}{\rm{cl}}(\la\sigma_0,...,\sigma_n,\sigma_{n+1},\tau'_0,...,\tau'_m\ra)$.
\end{rmk}

Note that if $T$ has NSOP$_2$ and SOP$_1$, then the hypothesis of Theorem \ref{antichaintree} below always holds.

\begin{thm}\label{antichaintree}
Suppose there exists $\varphi(x,y)$ which witnesses SOP$_1$ and there is no $n\in\omega$ such that $\bigwedge_{i=0}^{n}\varphi(x,y_i)$ witnesses SOP$_2$. Then there exists $\la b_{\eta}\ra_{\eta\in\tree}$ such that $\la\varphi(x,y),\la b_{\eta}\ra_{\eta\in\tree}\ra$ forms an antichain tree.
\begin{proof}
By Theorem \ref{beta} and compactness, there exists an $\beta$-indiscernible $\la a_\eta\ra_{\eta\in\trel}$ which witnesses SOP$_1$ with $\varphi$. Define a map $h:\tree\to\tree$ by
\[
h(\eta)=\left\{
 \begin{array}{ll}
  \llr                             & \text{ if }\,\,\,\eta=\lr \\
  h(\eta^-)^-\coc\la 001 \ra & \text{ if }\,\,\,t(\eta)=0\\
  h(\eta^-)^-\coc\la 011 \ra & \text{ if }\,\,\,t(\eta)=1.\\
 \end{array}
\right.
\]
For each  $i,k\in\omega$ and $\eta,\xi\in\trel$, put
\[
\begin{array}{rl}
L_i=\{h(\nu'):l(\nu')=i\}, & L_i(\eta)=\{\eta\coc\nu:\nu\in L_i\},\\
1_\xi=\{\xi\coc \la 1^d \ra :d\in\omega\}, & 1_\xi(\eta)=\{\eta\coc\nu:\nu\in 1_\xi\},\\
1_{\xi}^{k}=\{\xi\coc \la 1^{0}\ra, ... , \xi\coc \la 1^{k}\ra\}, & 1_\xi^k(\eta)=\{\eta\coc\nu:\nu\in 1_\xi^k\},\\
M_i=L_i\cup 1_{h(\la 0^i\ra)}, & M_i(\eta)=\{\eta\coc\nu:\nu\in M_i\},\\
M_i^k=L_i\cup 1_{h(\la 0^i\ra)}^{k}, & M_i^k(\eta)=\{\eta\coc\nu: \nu\in M_i^k\},\\
m_i^k=h(\la 0^i\ra)\coc \la 1^k\ra, & m_i^k(\eta)=\eta\coc m_i^k,
\end{array}
\]
where $\la 0^0\ra=\la 1^0\ra=\lr$. For each $X\subseteq\trel$, we define $\Phi_X$ by $\{\varphi(x,a_{\eta}):\eta\in X\}$. 

\begin{figure} \label{figure}
\SMALL
\begin{tikzpicture}
[level distance=9mm,
every node/.style={inner sep=0pt},
level 1/.style={sibling distance=34mm},
level 2/.style={sibling distance=50mm},
level 3/.style={sibling distance=18mm},
level 4/.style={sibling distance=25mm},
level 5/.style={sibling distance=13mm},
level 6/.style={sibling distance=17mm}]
\node  
{$\emptyset$} [grow'=up]
child { node {$\lor$} 
        child { node {$\la 0^2 \ra$} 
                child { node {$\la 0^3 \ra$}
                        child { node {$\la 0^4 \ra$}
                                child { node {$\la 0^5 \ra$} }
                                child { node {$h(\la 0^2 \ra)^{* \dagger}$}
                                        child { node {$\la 0^4 10\ra$}}
                                        child { node {$h(\la 0^2 \ra)\coc\llr^\dagger$} 
                                                child { node {$\la 0^4 1^2 0\ra$} } 
                                                child { node {$h(\la 0^2 \ra)\coc\la 1^2 \ra^\dagger$} 
                                                        child { node {$\la 0^4 1^3 0\ra$} }
                                                        child { node {$h(\la 0^2 \ra)\coc\la 1^3 \ra^\dagger$}
                                                                child { node {$\la 0^4 1^4 0\ra$} }
                                                                child { node {$h(\la 0^2 \ra)\coc\la 1^4 \ra$}} } } } } }
                        child { node {$\la 0^3 1 \ra$}
                                child { node {$\la 0^3 10 \ra$} }
                                child { node {$h(\la 01 \ra)^*$} } } }             
                child { node {$h(\lor)$}}}
        child { node{$\la 01 \ra$}
                child { node {$\la 010 \ra$}
                        child { node {$\la 010^2 \ra$}
                                child { node {$\la 010^3 \ra$} }
                                child { node {$h(\la 10 \ra)^*$} }}
                        child { node {$\la 0101 \ra$}
                                child { node {$\la 01010 \ra$} }
                                child { node {$h(\la 1^2 \ra)^*$} } } } 
                child { node {$h(\llr)$} } } } 
child { node {$h(\emptyset)$} } 
;
\end{tikzpicture}
\caption{\Small $m^3_2=h(\la 0^2 \ra)\coc\la 1^3 \ra$ and $M_2^3=L_2\cup 1^3_{h(\la 0^2\ra)}$, where $L_2$ is the set of elements marked *, $1^3_{h(\la 0^2\ra)}$ is the set of elements marked $\dagger$. }
\end{figure}

Let us note some remarks in advance to make the proof easier.
\begin{itemize}
\item[$\bullet$] $L_0=\{\llr\}$.
\item[$\bullet$] $M_0=1_{\llr}\subseteq 1_\lr$.
\item[$\bullet$] $M_0(\eta)=1_\llr(\eta)\subseteq 1_\lr(\eta)=1_\eta$.
\item[$\bullet$] $m_i^k\in M_i^k$ for all $i,k\in\omega$.
\item[$\bullet$] $m_i^k$ is the longest element of $M_i^k$ for all $i,j\in\omega$.
\item[$\bullet$] $t(\eta)=1$ for all $\eta\in M_i^k$ for all $i,k\in\omega$.
\item[$\bullet$] $\bigwedge M_i^k = \lor$ for all $i,k\in\omega$ with $i>0$.
\item[$\bullet$] $\bigwedge M_0^k = \llr$ for all $k\in\omega$.
\end{itemize}

\begin{claimx}
There exists $\eta\in\trel$ such that $\Phi_{M_i (\eta)}$ is consistent for all $i\in\omega$.
\begin{proof}
Suppose not. We construct $\la w_\eta, u_\eta\ra_{\eta\in\trel}$ which satisfies
\begin{enumerate}
\item[(i)] $w_\eta\subseteq \trel$, $u_\eta\in\trel$ for all $\eta\in\trel$,
\item[(ii)] $w_\lr =\emptyset$, $u_\lr =\la 110\ra$,
\item[(iii)] for all $\eta\in\trel$ and $l\leq 1$, there exist $i, k\in\omega$ such that 
\[ w_{\eta\coc\la l\ra}{=}M_i^k(({u_\eta})\coc \la l\ra),\,{\text{\,}}u_{\eta\coc \la l \ra}{=}m_i^k ((u_{\eta})\coc\la l \ra)\coc\la 110\ra,
\]
\item[(iv)] if $l(\eta)$ is a limit, then $u_\eta {=}(\bigcup_{\nu\trn\eta} u_\nu)\coc\la 110 \ra$, $w_\eta =\emptyset$, and $l(u_\eta)$ is limit,
\item[(v)] $\Phi_{p_\eta \cup w_{\eta\coc\lor} \cup w_{\eta\coc\llr}}$ is inconsistent for all $\eta\in\trel$, where $p_\eta =\bigcup_{\nu\tri\eta} w_\nu$,
\item[(vi)] for all $\eta,\nu\in\trel$, if $\eta\tri\nu$ then $u_\eta \trn (u_\nu)^{---}$, 
\item[(vii)] there is no $\xi\in p_\eta$ such that $(u_\eta)^{---}\trn\xi$,
\item[(viii)] for all $\eta,\nu\in\trel$ and $l\leq 1$, if $\eta\coc\lll\tri\nu$ then $u_{\eta}\coc\lll\tri\xi$ for all $\xi\in w_{\nu}$.
\item[(ix)] $\Phi_{p_\eta \cup 1_{(u_\eta)^-}}$ is consistent for all $\eta\in\trel$.
\end{enumerate}
First we put $w_\lr=\emptyset$ and $u_\lr=\la 110\ra$. Then clearly $\Phi_{p_\lr \cup 1_{(u_\lr)^-}}$ is consistent. Suppose that we have constructed $w_\nu$ and $u_\nu$ for all $\nu\tri\eta$. By (ix), $\Phi_{p_\eta \cup 1_{(u_\eta)^-}}$ is consistent so that its subset $\Phi_{p_\eta \cup M_0 ((u_{\eta})^-)}$ is also consistent. 
Since we assume that $\Phi_{M_i ((u_\eta)^-)}$ is not consistent for some $i\in\omega$, there exists $i\in\omega$ such that $\Phi_{p_\eta \cup M_i((u_\eta)^-)}$ is consistent and $\Phi_{p_\eta \cup M_{i+1}((u_\eta)^-)}$ is inconsistent. Note that $t(u_\nu)=0$ for all $\nu\tri\eta$ and $M_{i+1} \subseteq M_i(\la 00\ra)\cup M_i(\la 01\ra)$. So we have 
\[
\begin{array}{rl}
M_{i+1}((u_\eta)^-)\subseteq & M_i((u_\eta)^- \coc\la 00\ra)\cup M_i((u_\eta)^- \coc\la 01\ra) \\
                           = & M_i((u_\eta) \coc\lor)\cup M_i((u_\eta)\coc\llr).
\end{array}
\]
 Thus
\[
\Phi_{p_\eta \cup M_i ((u_{\eta})\coc\lor)\cup M_i((u_{\eta})\coc\llr)}
\]
is inconsistent. By compactness, there exists $k\in\omega$ such that 
\[\Phi_{p_\eta \cup M_i^k ((u_{\eta})\coc\lor)\cup M_i^k((u_{\eta})\coc\llr)}
\]
is inconsistent. Put 
\[
\begin{array}{ll}
w_{\eta\coc\lor}=M_i^k ((u_{\eta})\coc\lor), & w_{\eta\coc\llr}=M_i^k ((u_{\eta})\coc\llr),\\
u_{\eta\coc\lor}=(m_i^k((u_\eta)\coc\lor))\coc\la 110\ra, & u_{\eta\coc\llr}=(m_i^k((u_\eta)\coc\llr))\coc\la 110\ra.
\end{array}
\]
Then $w_{\eta\coc\lor},\, w_{\eta\coc\llr},\, u_{\eta\coc\lor},\, u_{\eta\coc\llr}$ satisfy (i)-(viii). To show they satisfy (ix), note that
\[
\begin{array}{r@{}l}
p_{\eta\coc\la l \ra}\cup 1_{({u_{\eta\coc\la l \ra})}^-} = & {\,\,\,\,} p_{\eta} \cup w_{\eta\coc\la l \ra} \cup 1_{(u_{\eta\coc\la l \ra})^-}\\
= & {\,\,\,\,} p_{\eta} \cup M_i^k((u_\eta)\coc\la l \ra)\cup 1_{(m_i^k((u_{\eta})\coc\la l\ra))\coc\la 11 \ra}\\
\subseteq & {\,\,\,\,} p_{\eta}\cup M_i((u_\eta)\coc\la l \ra).
\end{array}
\]
Thus it is enough to show that $\Phi_{p_{\eta}\cup M_i((u_\eta)\coc\la l \ra)}$ is consistent. Choose any finite subset of $p_\eta \cup M_i((u_\eta)\coc\la l \ra)$. This finite subset can be regarded as a tuple of the form
\[
\la\sigma_0,...,\sigma_n,(u_\eta)\coc\la l\ra\coc\tau_0,...,(u_\eta)\coc\la l\ra\coc\tau_m\ra,
\]
where $\sigma_0,...,\sigma_n\in p_\eta$ and $\tau_0,...,\tau_m\in M_i$. Say $\bar{\sigma}=\la \sigma_0,...,\sigma_n\ra$ and $\bar{\tau}$ $=$ $\la(u_\eta)\coc\la l\ra\coc\tau_0,$ $...,$ $(u_\eta)\coc\la l\ra\coc\tau_m\ra$.
Define a tuple $\bar{\tau}'$ by
\[
\bar{\tau}'=\la(u_\eta)^-\coc\tau_0,...,(u_\eta)^-\coc\tau_m\ra.
\]
Then $\bar{\tau}'\subseteq M_i((u_\eta)^-)$. Recall that $\Phi_{p_\eta \cup M_i((u_\eta)^-)}$ is consistent. Thus $\Phi_{\bar{\sigma}\coc\bar{\tau}'}$ is consistent. So it is enough to show that ${\rm{cl}}(\bar{\sigma}\coc\bar{\tau})\approx_{\beta}{\rm{cl}}(\bar{\sigma}\coc\bar{\tau}')$. By the construction and induction hypothesis, we have
\begin{enumerate}
\item[$\bullet$] $(u_\eta)^{---}\coc\la 11\ra\tri u_\eta^{-}, (u_\eta)\coc\la l \ra$,
\item[$\bullet$] there is no $i\leq n$ such that $(u_\eta)^{---}\trn\sigma_i$.
\end{enumerate}
Thus ${\rm{cl}}(\bar{\sigma}\coc (u_\eta)^{---}\coc\bar{\tau})\approx_{\beta}{\rm{cl}}(\bar{\sigma}\coc (u_\eta)^{---}\coc\bar{\tau}')$ by Lemma \ref{remark47}. In particular we have ${\rm{cl}}(\bar{\sigma}\coc\bar{\tau})\approx_{\beta}{\rm{cl}}(\bar{\sigma}\coc\bar{\tau}')$ as desired. So $w_{\eta\coc\la l\ra}, u_{\eta\coc\la l\ra}$ satisfies (i)-(ix).

For the limit case, suppose $l(\eta)$ is a limit ordinal, $w_\nu$ and $u_\nu$ are constructed for all $\nu\trn\eta$. Put $u_\eta =(\bigcup_{\nu\trn\eta} u_\nu)\coc \la 110\ra$ and $w_\eta = \emptyset$. Then by compactness and $\beta$-indiscernibility, it is clear that $w_\eta$ and $u_\eta$ satisfy (ix). (i)-(viii) are clear. This completes the choice of $\la w_\eta, u_\eta\ra_{\eta\in\trel}$. Note that
\begin{itemize}
\item[$\bullet$] for all $\eta,\nu\in\trel$ and $l\leq 1$, if $\eta\coc\la l\ra\tri\nu$ then $u_\eta\coc\la l\ra\tri u_\nu$.
\end{itemize}

For each $\eta\in\trel$, there exists a finite subset of $p_\eta$  which is inconsistent with $w_{\eta\coc\lor}\cup w_{\eta\coc\llr}$. Let us call this finite subset $q_\eta$. By the similar argument in Lemma \ref{monochrom lemma}, we can prove the following statement. 
\begin{subclaim1} 
There exists a finite subset $q$ of $\trel$ and $\eta^*\in\trel$ such that for any $\eta\trr\eta^*$, there exists $\eta'\trr\eta$ such that $q_{\eta'}=q$.
\begin{proof}
Suppose not. Then
\begin{enumerate}
\item[($*$)] for any finite $q\subseteq\trel$ and $\eta \in\trel$ there exists $\eta^{q}\trr\eta$ such that $q_{\eta'} \neq q$ for all $\eta'\trr\eta^{q}$.
\end{enumerate}
\begin{subsubclaim}
For all $\eta\in\trel$, there exists $\eta'\trr\eta$ such that $q_{\eta''}\not\subseteq p_\eta$ for all $\eta'' \trr \eta'$.
\begin{proof}
Suppose not. Then 
\begin{enumerate}
\item[($\dagger$)] there exists $\eta\in\trel$ such that for all $\eta'\trr\eta$ there is some $\eta''\trr\eta'$ such that $q_{\eta''}\subseteq p_\eta$.
\end{enumerate}
For that $\eta$, there exists $\eta^\emptyset\trr\eta$ such that  $q_{\eta'}\neq \emptyset$ for all $\eta'\trr\eta^{\emptyset}$ by ($*$). And by the choice of $\eta$ in ($\dagger$), there exists $\eta_1\trr\eta^{\emptyset}$ such that $q_{\eta_1}\subseteq p_\eta$. Similarly we can choose $\eta^{q_{\eta_{1}}}\trr\eta_1$ such that $q_{\eta'}\neq q_{\eta_{1}}$ for all $\eta'\trr\eta^{q_{\eta_{1}}}$ by ($*$). And there exists $\eta_2\trr\eta^{q_{\eta_{1}}}$ such that $q_{\eta_2}\subseteq p_\eta$. By iterating this process, one can choose $\langle \eta_i\rangle_{i\in\omega_1}$ which satisfies $q_{\eta_i}\subseteq p_\eta$ for all $i\in\omega_1$, and $q_i\neq q_j$ for all $i,j\in\omega_1$. But it is not possible since the number of finite subsets of countable set $p_\eta$ is countable. This proves Subsubclaim.
\end{proof} 
\end{subsubclaim}
Hence for all $\eta\in\trel$ there exists $\eta'\trr\eta$ such that $q_{\eta''}\not\subseteq p_\eta$ for all $\eta''\trr\eta'$. Put $\eta_0=\lr$. And recursively choose $\eta_{i+1}\trr\eta_i$ which satisfies $q_{\eta''}\not\subseteq p_{\eta_i}$ for all $\eta''\trr\eta_{i+1}$. Let $\eta_\omega$ be $\bigcup_{i\in\omega} \eta_i$. Then $p_{\eta_\omega}=\bigcup_{i\in\omega}p_{\eta_i}$. Since $q_{\eta_\omega}$ is finite, there exists $i\in\omega$ such that $q_{\eta_\omega}\subseteq p_{\eta_i}$. Since $\eta_{\omega}\trr\eta_{i+1}$, we have $q_{\eta_{\omega}}\not\subseteq p_{\eta_i}$, a contradiction. This proves Subclaim 1.
\end{proof}
\end{subclaim1} 
Now we are ready to prove the claim. To show the claim, we construct a witness of SOP$_2$. Fix a finite subset $q$ of $\trel$ and $\eta^*\in\trel$ satisfying Subclaim 1. By the choice of $q$, $\eta^*$, and cofinality of $\omega_1$ we can find a map $f:\trel\to\trel$ such that
\begin{enumerate}
\item[$\bullet$] $f(\emptyset)=\eta^*$,
\item[$\bullet$] $f(\eta)\tri f(\nu)$ for all $\eta,\nu\in\trel$ with $\eta\tri\nu$,
\item[$\bullet$] $f(\eta)\coc\la l\ra\tri f(\eta\coc\la l\ra)$ for all $\eta\in\trel$, $l\leq 1$,
\item[$\bullet$] $f(\eta)\trr\bigcup_{\nu\tri\eta}f(\nu)$ for all $\eta$ such that $l(\eta)$ is a limit ordinal,
\item[$\bullet$] $q_{f(\eta)}=q$ for all $\eta\in\trel$.
\end{enumerate}
Note that for each $\eta\in\trel$, there exists $i,k\in\omega$ such that $w_{f(\eta)\coc\lll}$ is of the form $M_i^k((u_{f(\eta)})\coc\la l \ra)$ for all $l\leq 1$. By using Lemma \ref{monochrom lemma}, we can find a map $g:\tree\to\trel$ and $i,k\in\omega$ such that
\begin{enumerate}
\item[$\bullet$] $g(\eta)\tri g(\nu)$ for all $\eta,\nu\in\tree$ with $\eta\tri\nu$,
\item[$\bullet$] $g(\eta)\coc\lll\tri g(\eta\coc\lll)$ for all $\eta\in\tree$ and $l\leq 1$,
\item[$\bullet$] $w_{f(g(\eta))\coc\lll}=M_i^k((u_{f(g(\eta))})\coc\lll)$ for for all $\eta\in\tree$ and $l\leq 1$.
\end{enumerate}
Let $j=|q|$ and $e=j+|M_i^k|$ (so $e=j+k+2^i$). Say $q=\{\mu_0,...,\mu_{j-1}\}$ and $M_i^k=\{\mu_j,...,\mu_{e-1}\}$. Then $M_i^k(\lambda)=\{\lambda\coc\mu_j,...,\lambda\coc\mu_{e-1}\}$ for all $\lambda\in\trel$. Put $\psi(x,y_0,...,y_{e-1})=\varphi(x,y_0)\wedge ... \wedge\varphi(x,y_{e-1})$ and denote it by $\psi(x,\bar{y})$. For each $\eta\in\tree$ with $l(\eta)>0$, let $\lambda_\eta=(u_{f(g(\eta^-))})\coc\la t(\eta)\ra$ and
\[
\bar{a}_{\eta}=\la a_{\mu_0},...,a_{\mu_{j-1}},a_{(\lambda_\eta) \coc \mu_j},...,a_{(\lambda_\eta)\coc \mu_{e-1}}\ra.
\]   
Note that
\begin{itemize}
\item[$\bullet$] $\{\mu_0,...,\mu_{j-1},(\lambda_\eta)\coc\mu_j,...,(\lambda_\eta)\coc\mu_{e-1}\}=q\cup w_{f(g(\eta^-))\coc\langle t(\eta)\rangle}$.
\end{itemize}
\begin{subclaim2}
$\{\psi(x,\bar{a}_{\eta}),\psi(x,\bar{a}_{\nu})\}$ is inconsistent for any $\eta,\nu\in\tree$ with $\eta\perp\nu$ and $\lor\tri\eta,\nu$. $\{\psi(x,\bar{a}_{\eta\res n}): 0<n\in\omega\}$ is consistent for all $\eta\in\tree$ with $\lor\tri\eta$.
\end{subclaim2}
\begin{proof}
Suppose $\eta\perp\nu$ and $\lor\tri\eta,\nu$. Say $\xi=\eta\wedge\nu$. Clearly $\lor\tri\xi$ and we may assume $\xi\coc\lor\tri\eta$ and $\xi\coc\llr\tri\nu$. Then $f(g(\xi))\coc\lor\tri f(g(\eta^-))\coc\la t(\eta)\ra$ and $f(g(\xi))\coc\llr\tri f(g(\eta^-))\coc\la t(\mu)\ra$ so that $u_{f(g(\xi))}\coc\lor\tri u_{f(g(\xi))\coc\lor}\tri\lambda_\eta$ and $u_{f(g(\xi))}\coc\llr\tri u_{f(g(\xi))\coc\llr}\tri\lambda_\nu$.
Since $\nu_s\in q=q_{f(g(\xi))}\subseteq p_{f(g(\xi))}$ for each $s<j$, we have $(u_{f(g(\xi))})\not\tri\mu_s$ in particular $(u_{f(g(\xi))})\coc\lor\not\tri\mu_s$ for all $s<j$. It is clear that $u_{f(g(\xi))}\coc\lor\tri\lambda_{\xi\coc\lor}$. So we can apply Remark \ref{remark47} to obtain
\[
{\rm{cl}}(\bar{a}_\eta\coc\bar{a}_\nu)\approx_\beta {\rm{cl}}(\bar{a}_{\xi\coc\lor}\coc\bar{a}_\nu).
\]
As we remarked, $\bigwedge M_i^k=\la l\ra$ for some $l\leq 1$. Thus $\mu_s=\la l\ra\coc\mu_s'$ for some $\mu_s'\in\trel$ for each $j\leq s<e$. Thus
\[
\begin{array}{rl}
(\lambda_{\xi\coc\llr})\coc\mu_s=(\lambda_{\xi\coc\llr})\coc\la l\ra\coc\mu_s'=&(u_{f(g(\xi))})\coc\llr\coc\la l\ra\coc\mu_s', \\
(\lambda_{\nu})\coc\mu_s=&(\lambda_{\nu})\coc\la l\ra\coc\mu_s'
\end{array}
\]
for all $j\leq s<e$.
Clearly
\[
\begin{array}{l}
(u_{f(g(\xi))})\coc\llr\not\tri\lambda_{\xi\coc\lor}, \\
(u_{f(g(\xi))})\coc\llr\not\tri\mu_s, \\
(u_{f(g(\xi))})\coc\llr\neq (u_{f(g(\xi))})\coc\llr\coc\la l\ra,\\
(u_{f(g(\xi))})\coc\llr\neq (\lambda_{\nu})\coc\la l\ra
\end{array}
\]
for all $s<j$. By Remark \ref{remark47} again, we have
\[
{\rm{cl}}(\bar{a}_{\xi\coc\lor}\coc\bar{a}_\nu)\approx_\beta {\rm{cl}}(\bar{a}_{\xi\coc\lor}\coc\bar{a}_{\xi\coc\llr}).
\]
As we remarked above
\[
\begin{array}{l}
\{\mu_0,...,\mu_{j-1},(\lambda_{\xi\coc\lor})\coc\mu_j,...,(\lambda_{\xi\coc\lor})\coc\mu_{e-1}\}=q\cup w_{f(g(\xi))\coc\lor},\\
\{\mu_0,...,\mu_{j-1},(\lambda_{\xi\coc\llr})\coc\mu_j,...,(\lambda_{\xi\coc\llr})\coc\mu_{e-1}\}=q\cup w_{f(g(\xi))\coc\llr}.
\end{array}
\]
Hence $\{\psi(x,\bar{a}_{\xi\coc\lor}),\psi(x,\bar{a}_{\xi\coc\llr})\}$ is inconsistent as $\Phi_{q\cup w_{f(g(\xi))\coc\lor}\cup w_{f(g(\xi))\coc\llr}}$ is inconsistent. By $\beta$-indiscernibility $\{\psi(x,\bar{a}_{\eta}),\psi(x,\bar{a}_{\nu})\}$ is also inconsistent.

Suppose $\eta\in {^\omega}2$. Consistency of $\{\psi(x,\bar{a}_{\eta\res n}): 0<n\in\omega\}$ follows the consistency of $\Phi_{p_{\eta}}$. This proves Subclaim 2.
\end{proof}
So $\psi$ witnesses SOP$_2$. But we assume that any conjunction of $\varphi$ does not witness SOP$_2$. Thus we have a contradiction. This proves Claim.
\end{proof}
\end{claimx}
Therefore, there exists $\eta\in\trel$ such that $\Phi_{M_i(\eta)}$ is consistent for all $i\in\omega$. By $\beta$-indiscernibility, we may assume $\eta=\lr$. For each $\eta\in\tree$, put $b_\eta= a_{h(\eta)} $. We show that $\la \varphi(x,b_{\eta})\ra_{\eta\in\tree}$ is an antichain tree. Suppose $\eta\trn\nu$. It easily follows that $h(\eta)^-\coc\llr=h(\eta)$ and $h(\eta)^-\coc\lor\tri h(\nu)$ and hence $\{\varphi(x,b_\eta),\varphi(x,b_\nu)\}$ is inconsistent. Thus if $X\subseteq \tree$ is not an antichain, then the set $\{\varphi(x,b_\eta):\eta\in X\}$ is not consistent.
 
 Let $\{\eta_0,...,\eta_n\}\subseteq\tree$ be an antichain. Then there exist $\nu_0,...,\nu_n$ such that $ \eta_i\tri\nu_i$ for all $i\leq n$, and $l(\nu_0)=...=l(\nu_n)$. So $\{\varphi(x,b_{\nu_0}),...,\varphi(x,b_{\nu_n})\}$ is consistent since $h(\nu_0),...,h(\nu_n)\in M_{l(\nu_0)}$. Thus the only remain is to show that $\cl (\la h(\eta_0),...,h(\eta_n)\ra) \approx_{\beta} \cl(\la h(\nu_0),...,h(\nu_n)\ra)$.
 
 First, we may assume that $\eta_0,...,\eta_n$ are ordered by lexicographic order. That is, $(\eta_i\wedge\eta_{i+1})\coc\lor\tri\eta_i$ for all $i<n$. It is easy to show that for any $i<j\leq n$, 
\begin{enumerate}
\item[$\bullet$] $h(\eta_i)\wedge h(\eta_j) = h(\nu_i)\wedge h(\eta_j)= h(\eta_i)\wedge h(\nu_j)= h(\nu_i)\wedge h(\nu_j)$,
\item[$\bullet$] $h(\eta_i)\wedge h(\eta_j)\coc\lor\tri h(\eta_i),h(\nu_i)$,
\item[$\bullet$] $(h(\eta_i)\wedge h(\eta_j))\coc\llr\tri h(\eta_j),h(\nu_j)$,
\item[$\bullet$] $(h(\eta_i)\wedge h(\eta_j))\coc\llr\neq h(\eta_j),h(\nu_j)$.
\end{enumerate} 
Therefore, we can apply Remark \ref{remark47} repeatedly to obtain
\[
\begin{array}{l}
              \,\,\,\,\,\,\,\,\,\,  \cl(\la h(\eta_0),h(\eta_1),h(\eta_2),...,h(\eta_n)\ra) \\
\approx_{\beta} \cl(\la h(\nu_0),h(\eta_1),h(\eta_2),...,h(\eta_n)\ra) \\
\approx_{\beta} \cl(\la h(\nu_0),h(\nu_1),h(\eta_2),...,h(\eta_n)\ra) \\
\cdots\\
\approx_{\beta} \cl(\la h(\nu_0),h(\nu_1),h(\nu_2),...,h(\nu_n)\ra). \\
\end{array}
\]
This completes proof of Theorem \ref{antichaintree}.
\end{proof}
\end{thm}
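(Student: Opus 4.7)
The plan is to start from a $\beta$-indiscernible tree $\la a_\eta\ra_{\eta\in\trel}$ witnessing SOP$_1$ with $\varphi$, obtained by combining Theorem \ref{beta} with compactness to enlarge the index from $\tree$ to $\trel$. The target antichain tree will be built by pulling back along an embedding $h:\tree\to\tree$ that sends $\eta\coc\lor$ to $h(\eta)\coc\la 001\ra$ and $\eta\coc\llr$ to $h(\eta)\coc\la 011\ra$; this design ensures that for any $\eta\trn\nu$ the images split with $h(\eta)=h(\eta^-)\coc\la\cdots 1\ra$ while $h(\eta)^-\coc\lor\tri h(\nu)$, thereby forcing $\{\varphi(x,b_\eta),\varphi(x,b_\nu)\}$ inconsistent via SOP$_1$. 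Consistency along branches and across antichains will be delivered by suitable ``level sets'' $L_i=\{h(\nu'):l(\nu')=i\}$ enriched by the stalk $1_{h(\la 0^i\ra)}$, forming the sets $M_i$.

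The crux will be to locate an anchor $\eta^\ast\in\trel$ such that $\Phi_{M_i(\eta^\ast)}$ is consistent for every $i\in\omega$. Granted such $\eta^\ast$, $\beta$-indiscernibility lets us translate it to $\lr$, and then for any finite antichain $\{\eta_0,\dots,\eta_n\}\subseteq\tree$ we can lift each $\eta_i$ to a common depth $\nu_i\in L_m\subseteq M_m$ and use Remark \ref{remark47} repeatedly, replacing one coordinate at a time (since each adjacent pair of lex-ordered antichain nodes meets at a point whose two children dominate the original and the replacement respectively), to obtain $\cl(\la h(\eta_0),\dots,h(\eta_n)\ra)\approx_\beta\cl(\la h(\nu_0),\dots,h(\nu_n)\ra)$, transporting consistency from the lifted tuple to the original.

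The heart of the argument, which I expect to be the main obstacle, is proving the existence of $\eta^\ast$. I plan to argue by contradiction and construct, by recursion along $\trel$ of height $\omega_1$, a system $\la w_\eta, u_\eta\ra_{\eta\in\trel}$ where $u_\eta$ is a padded address (ending in a $\la 110\ra$ block so $(u_\eta)^{---}\coc\la 11\ra$ dominates the truly significant part) and $w_\eta=M_i^k((u_{\eta^-})\coc\la t(\eta)\ra)$ for indices $i,k$ chosen at the successor step. The invariant is that $\Phi_{p_\eta\cup 1_{(u_\eta)^-}}$ stays consistent while $\Phi_{p_\eta\cup w_{\eta\coc\lor}\cup w_{\eta\coc\llr}}$ is inconsistent. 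The existence of the splitting index $i$ will come from the negation of the goal together with the inclusion $M_{i+1}\subseteq M_i(\la 00\ra)\cup M_i(\la 01\ra)$, and the truncation $k$ from compactness; preservation of the consistency invariant will again rely on Remark \ref{remark47}, invoked via the padding of $u_\eta$ to absorb the coordinate changes into a $\beta$-equivalence.

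To close the contradiction I will extract uniform inconsistency data from the $\omega_1$-tall recursion. A pigeonhole argument over cofinal chains will first produce a finite $q\subseteq\trel$ and a cone above some $\eta^\ast$ on which the finite inconsistency witnesses $q_\eta\subseteq p_\eta$ are constantly $q$; then Lemma \ref{monochrom lemma} applied to a countably-valued colouring of this cone will give a $\tree$-shaped subtree on which the pair $(i,k)$ used at each split is constant. Packaging these constants as $\psi(x,\bar y)=\bigwedge_{s<|q|+|M_i^k|}\varphi(x,y_s)$ with parameter tuples $\bar a_\eta$ encoding $q\cup w_{\cdots\coc\langle t(\eta)\rangle}$ will, via two further applications of Remark \ref{remark47} at each antichain pair, yield a strict SOP$_2$ configuration for $\psi$, contradicting the hypothesis that no finite conjunction of $\varphi$ has SOP$_2$. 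This contradiction produces $\eta^\ast$ and completes the theorem.
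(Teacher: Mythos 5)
Your proposal follows essentially the same route as the paper's proof: the same coding map $h$ with blocks $\la 001\ra$, $\la 011\ra$, the same level-plus-stalk sets $M_i$, the same anchor claim proved by contradiction via the padded $\la w_\eta,u_\eta\ra_{\eta\in\trel}$ recursion, the same extraction of a constant $q$ and constant $(i,k)$ via pigeonhole and Lemma \ref{monochrom lemma}, the same conjunction $\psi$ yielding a forbidden SOP$_2$ witness, and the same final verification through repeated use of Remark \ref{remark47}. The plan is correct in outline and matches the paper's argument.
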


\begin{cor}\label{SOP1NSOP2->ATP}
If there exists a formula having SOP$_1$ but any conjunction of it does not have SOP$_2$, then the theory has ATP. The witness of ATP can be selected to be strongly indiscernible.
\begin{proof}
If a theory has SOP$_1$ and does not have SOP$_2$, then the theory has a formula which witnesses SOP$_1$ and any finite conjunction of the formula does not witness SOP$_2$. So we can apply Theorem \ref{antichaintree}. The theory has a witness of ATP. Furthermore, we can obtain a strong indiscernible witness of ATP by using compactness and the modeling property.
\end{proof}
\end{cor}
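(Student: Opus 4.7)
The plan is to read this as a direct corollary of Theorem \ref{antichaintree}, with only a small extra step to upgrade the witness to a strongly indiscernible one. First, I would unpack the hypothesis: $T$ having $\sopl$ means some formula $\varphi(x,y)$ witnesses $\sopl$, and $T$ being $\nsopz$ means \emph{no} formula (in any variables) witnesses $\sopz$; in particular, for every $n\in\omega$, the conjunction $\bigwedge_{k<n}\varphi(x,y_k)$ fails $\sopz$. This is exactly the hypothesis of Theorem \ref{antichaintree}, so I apply it to obtain a tree $\la b_\eta\ra_{\eta\in\tree}$ such that $\la \varphi,\la b_\eta\ra_{\eta\in\tree}\ra$ is an antichain tree.

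To upgrade this to a strongly indiscernible witness, I would invoke the modeling property of strongly indiscernible trees (Fact \ref{modeling}) to produce $\la c_\eta\ra_{\eta\in\tree}$ which is strongly indiscernible and strongly based on $\la b_\eta\ra_{\eta\in\tree}$. It remains to verify that $\la\varphi,\la c_\eta\ra_{\eta\in\tree}\ra$ is still an antichain tree; by compactness, it is enough to check the defining condition on finite subsets $X\subseteq\tree$. For $X=\{\eta_0,\dots,\eta_n\}$ consider the formula $\psi(x,\bar y)=\bigwedge_i\varphi(x,y_i)$. By basedness there is $\bar\nu=\la\nu_0,\dots,\nu_n\ra$ with $\bar\nu\sim_{str}\bar\eta$ and $\bar c_{\bar\eta}\equiv_{\{\psi\}} \bar b_{\bar\nu}$. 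Since strong similarity preserves the antichain/non-antichain character of the tuple, $\{\nu_0,\dots,\nu_n\}$ is an antichain iff $\{\eta_0,\dots,\eta_n\}$ is. Hence $\exists x\,\psi(x,\bar c_{\bar\eta})$ holds iff $\exists x\,\psi(x,\bar b_{\bar\nu})$ holds iff $\{\nu_0,\dots,\nu_n\}$ is an antichain, which gives the antichain tree property for $\la c_\eta\ra_{\eta\in\tree}$.

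Since both directions of the antichain tree definition (consistency on antichains, inconsistency on non-antichains) are captured by an $\exists x$-formula that is transferred by basedness, there is no real obstacle here; the proof is essentially mechanical once Theorem \ref{antichaintree} is in hand. The only thing that requires a moment of care is observing that quantifier-free $L_0$-equivalence preserves the antichain property of a finite tuple (because $\eta_i\perp\eta_j$ is expressible by $\eta_i\not\tri\eta_j\wedge\eta_j\not\tri\eta_i$, which is a quantifier-free $L_0$-condition), and this is exactly what strong similarity guarantees.
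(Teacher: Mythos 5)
Your overall route is the same as the paper's: apply Theorem \ref{antichaintree} to get an antichain tree for $\varphi$, then upgrade to a strongly indiscernible witness via the modeling property, checking by basedness (plus compactness to reduce to finite $X$) that the antichain-tree condition survives. Your verification of the transfer is correct and is in fact more detailed than the paper's one-line "compactness and the modeling property": the observation that both directions of the condition are captured by the $\mathcal{L}$-formula $\exists x\bigwedge_i\varphi(x,y_i)$, and that strong similarity (being quantifier-free $L_0$-equivalence) preserves whether a finite tuple is an antichain, is exactly what makes the basedness argument work.

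The one wrinkle is your invocation of Fact \ref{modeling} directly for a tree indexed by $\tree={}^{\omega>}2$: that fact is stated for the index structure $^{<\omega}\omega$, and the paper itself points out in Section 2 that it cannot be applied directly to binary-indexed trees (which is why it cites a separate lemma from \cite{CR} for $\sopz$ witnesses). The repair is routine and is presumably what the word ``compactness'' in the paper's proof is doing: first stretch the antichain tree to one indexed by $^{\omega>}\omega$, using compactness together with an embedding of each finite tree $^{k>}n$ into $^{\omega>}2$ that preserves $\trianglelefteq$ and $\perp$ exactly (so ``consistent iff antichain'' is preserved); then apply Fact \ref{modeling} to this $^{\omega>}\omega$-indexed family, run your basedness argument verbatim, and finally restrict to the binary subtree $^{\omega>}2\subseteq{}^{\omega>}\omega$, which is still strongly indiscernible and still an antichain tree. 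With that intermediate step inserted, your proof is complete and coincides with the paper's intended argument.
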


As we observed in the beginning of this section, one can find witnesses of SOP$_1$ and TP$_2$ from an antichain tree $\la \varphi(x,y),\la a_\eta\ra_{\eta\in\tree}\ra$ by restricting the parameter part $\la a_\eta\ra_{\eta\in\tree}$. But we can not use the same method for finding a witness of SOP$_2$. 

\begin{rmk}\label{AT-x-SOP2} The following are true.
\begin{enumerate}
\item[(i)] Suppose $\la \varphi(x,y), \la a_{\eta}\ra_{\eta\in\tree} \ra$ is an antichain tree. Then there is no $h:{^{2\geq}}2\to\tree$ such that $\la \varphi(x,y), \la b_{\eta}\ra_{\eta\in {^{2 ^{\geq}}}2} \ra$ satisfies the conditions of SOP$_2$, where $b_{\eta}=a_{h(\eta)}$ for each $\eta\in {^{2 \geq}}2$.
\item[(ii)] Suppose $\la \varphi(x,y), \la a_{\eta}\ra_{\eta\in\tree} \ra$ witnesses SOP$_2$. Then there is no $h:{^{2\geq}}2\to\tree$ such that $\la \varphi(x,y), \la b_{\eta}\ra_{\eta\in {^{2 ^{\geq}}}2} \ra$ forms an antichain tree with height 2, where $b_{\eta}=a_{h(\eta)}$ for each $\eta\in {^{2 \geq}}2$.
\end{enumerate}
\begin{proof}
{\rm (i)} To get a contradiction, suppose there exists such $h$. Then $h(\la 00 \ra)$, $h(\la 01 \ra)$, $h(\la 10 \ra)$, and $h(\la 11 \ra)$ are pairwisely comparable in $\tree$, so they are linearly ordered by $\tri$. We may assume $h(\la 00 \ra)$ is the smallest. Since $h(\lor)$ and $h(\la 00 \ra)$ are incomparable, $h(\lor)$ and $h(\la 11 \ra)$ are incomparable. Thus $\{\varphi(x,b_{\lor}),\varphi(x,b_{\la 11 \ra})\}$ is consistent. This is a contradiction.\\
{\rm (ii)} To get a contradiction, suppose there exists such $h$. Then $h(\la 00 \ra)$, $h(\la 01 \ra)$, $h(\la 10 \ra)$, and $h(\la 11 \ra)$ are pairwisely comparable in $\tree$, so they are linearly ordered by $\tri$. We may assume $h(\la 00 \ra)$ is the smallest. Since $h(\lor)$ and $h(\la 00 \ra)$ are incomparable, $h(\lor)$ and $h(\la 11 \ra)$ are incomparable. Thus $\{\varphi(x,b_{\lor}),\varphi(x,b_{\la 11 \ra})\}$ is inconsistent. This is a contradiction.
\end{proof}
\end{rmk}

But it does not mean the existence of an antichain tree prevents the theory from having a witness of SOP$_2$. Under some additional conditions, the existence of an antichain tree is still likely to cause SOP$_2$. For example, if $\la\varphi(x,y),\la a_\eta \ra_{\eta\in\tree}\ra$ forms an antichain tree and `$\tri$' is definable by some formula, then $\tri$ could be a witness of SOP$_2$. We will consider this little more in Section \ref{example of at}.

We end this section with the following remarks. 

\begin{rmk}
If the existence of an antichain tree always implies the existence of a witness of SOP$_2$, then $\sopl=\sopz$ by Corollary \ref{SOP1NSOP2->ATP}.
\end{rmk}

\begin{rmk}
If there exists a NSOP$_2$ theory having an antichain tree, then $\sopl \supsetneq \sopz$ by Proposition \ref{AT->SOP1}.
\end{rmk}

\section{1 strong order property with full consistency}

 As we mentioned in the introduction, in \cite{DS} M. D{\v z}amonja and S. Shelah claimed that if there is a theory which is SOP$_1$-NSOP$_2$, then the theory has a witness of SOP$_1$ which satisfies certain condition (see [\ref{DS}, Claim {\color{red}2.15}], Claim \ref{claim} in this paper). Although the claim was not correct, we can obtain some results from it by modifying its condition. In this section, we introduce a notion of {\it 1 strong order property with full consistency} (SOP$^{\rm fc}_1$) and correct Claim \ref{claim} to make sense by using SOP$^{\rm fc}_1$.

\begin{defn} 
We say a tuple $\la\varphi(x,y), \la a_\eta\ra_{\eta\in\tree}\ra$ witnesses {\it 1 strong order property with full consistency (SOP$^{\;\!fc}_1$)} if for all $X\subseteq\tree$, $\{\varphi(x,a_\eta):\eta \in X\}$ is consistent if and only if $X$ does not contain $\eta\coc\llr$ and $\eta\coc\lor\coc\nu$ for some $\eta, \nu\in\tree$.

We say a theory has SOP$^{\rm f}_1$ if it has an SOP$^{\rm fc}_1$ formula. Otherwise, we say $T$ is NSOP$^{\rm fc}_1$. 
\end{defn}

Clearly if a theory has SOP$^{\rm fc}_1$, then it has SOP$_1$. Let us compare SOP$_1$, SOP$_2$ and SOP$^{\rm fc}_1$ briefly.

\begin{rmk}\label{comparing sop1sop2ssop1}
The notions of tree properties can be understood to have the following characteristics.
\begin{enumerate}
\item[(i)] Let $\la\varphi(x,y), \la a_\eta\ra_{\eta\in\tree}\ra$ is a witness of SOP$_1$. Then for any $X\subseteq\tree$,
\begin{enumerate}
\item[a.] if $X$ is linearly ordered by $\tri$, then $\{\varphi(x,a_\eta):\eta \in X\}$ is consistent,
\item[b.] if $\eta\coc\llr, \eta\coc\lor\coc\nu\in X$ for some $\eta,\nu\in\tree$, then $\{\varphi(x,a_\eta):\eta \in X\}$ is inconsistent,
\item[c.] if $X$ is not linearly ordered by $\tri$ and there do not exist $\eta,\nu\in\tree$ such that $\eta\coc\llr,\eta\coc\lor\coc\nu\in X$, then the definition of SOP$_1$ does not decide whether the set $\{\varphi(x,a_\eta):\eta \in X\}$ is consistent or inconsistent.
\end{enumerate}
\item[(ii)] Let $\la\varphi(x,y), \la a_\eta\ra_{\eta\in\tree}\ra$ is a witness of SOP$_2$. Then for any $X\subseteq\tree$,
\begin{enumerate}
\item[a.] if $X$ is linearly ordered by $\tri$, then $\{\varphi(x,a_\eta):\eta \in X\}$ is consistent,
\item[b.] if $\eta\coc\llr, \eta\coc\lor\coc\nu\in X$ for some $\eta,\nu\in\tree$, then $\{\varphi(x,a_\eta):\eta \in X\}$ is inconsistent,
\item[c.] if $X$ is not linearly ordered by $\tri$ and there do not exist $\eta,\nu\in\tree$ such that $\eta\coc\llr,\eta\coc\lor\coc\nu\in X$, then $\{\varphi(x,a_\eta):\eta \in X\}$ is inconsistent.
\end{enumerate}
\item[(iii)] Let $\la\varphi(x,y), \la a_\eta\ra_{\eta\in\tree}\ra$ is a witness of SOP$^{\rm fc}_1$. Then for any $X\subseteq\tree$,
\begin{enumerate}
\item[a.] if $X$ is linearly ordered by $\tri$, then $\{\varphi(x,a_\eta):\eta \in X\}$ is consistent,
\item[b.] if $\eta\coc\llr, \eta\coc\lor\coc\nu\in X$ for some $\eta,\nu\in\tree$, then $\{\varphi(x,a_\eta):\eta \in X\}$ is inconsistent,
\item[c.] if $X$ is not linearly ordered by $\tri$ and there do not exist $\eta,\nu\in\tree$ such that $\eta\coc\llr,\eta\coc\lor\coc\nu\in X$, then $\{\varphi(x,a_\eta):\eta \in X\}$ is consistent.
\end{enumerate}
\end{enumerate}
\end{rmk}

Thus, roughly speaking, notions of SOP$_2$ and SOP$^{\rm fc}_1$ can be considered to be obtained by removing the ambiguity of SOP$_1$, in the opposite way to each other (by `ambiguity' we mean the situation that there is a subset $X$ of $\tree$ such that we cannot know whether $\{\varphi(x,a_\eta):\eta\in X\}$ is consistent or inconsistent by the definition of SOP$_1$ alone).

As in Section \ref{sec3}, $\beta$-indiscernibility also preserves SOP$^{\rm fc}_1$. Namely,

\begin{prop}\label{beta preserves ssop1}
Suppose $\varphi(x,y)$ witnesses SOP$^{\;\!fc}_1$. Then there is a $\beta$-indiscernible tree  which witnesses SOP$^{\;\!fc}_1$ with $\varphi$.
\begin{proof}
We use the same argument which appears in Lemma \ref{gamma} and Theorem \ref{beta}. Suppose there exists $\la a_\eta \ra_{\eta\in\tree}$ which witnesses SOP$^{\rm f}_1$ with $\varphi$ and let $b_\eta=a_{\eta\coc\lor}\coc a_{\eta\coc\llr}$ for each $\eta\in\tree$. By the modeling property of $\alpha$-indiscernibility, there exists an $\alpha$-indiscernible tree $\la c_\eta \ra_{\eta\in\tree}$ such that for any $\bar{\eta}$ and finite subset $\Delta$ of $\mathcal{L}$-formulas, $\bar{\nu}\approx_{\alpha}\bar{\eta}$ and $\bar{b}_{\bar{\nu}}\equiv_{\Delta} \bar{c}_{\bar{\eta}}$ for some $\bar{\nu}$. 
Then $c_\eta$ can be written of the form $c^0_\eta \coc c^1_\eta$ for each $\eta\in\tree$. For each $\eta\in\tree$ with $l(\eta)\geq 1$, we define $d'_{\eta}$ by $c^0_{\eta^{-}}$ if $t(\eta)=0$, $c^1_{\eta^{-}}$ if $t(\eta)=1$. Put $d_\eta=d'_{\lor\coc\eta}$ for each $\eta\in\tree$. Define $\psi_0$ and $\psi_1$ by $\psi_0(x;\bar{y_0},\bar{y_1})=\varphi(x,\bar{y_0})\wedge \bar{y_1}=\bar{y_1}$ and $\psi_1(x;\bar{y_0},\bar{y_1})=\varphi(x,\bar{y_1})\wedge \bar{y_0}=\bar{y_0}$ respectively.

As we observed in Lemma \ref{gamma}, $\la d_{\eta}\ra_{\eta\in\tree}$ is $\gamma$-indiscernible and $\{\varphi(x,d_{\eta\coc\llr}),$ $\varphi(x,d_{\eta\coc\lor\coc\nu})\}$ is inconsistent for all $\eta,\nu\in\tree$.
\begin{claimx}
For all $X\subseteq\tree$, if there are no $\eta,\nu\in\tree$ such that $\eta\coc\llr,\eta\coc\lor\coc\nu\in X$, then $\{\varphi(x,d_\xi):\xi\in X\}$ is consistent.
\begin{proof}
By compactness, we may assume $X$ is finite. Let $X_0=\{\xi\in X: t(\xi)=0\}$, $X_1=\{\xi\in X: t(\xi)=1\}$. And let $\{\sigma_0 \coc\lor ,..., \sigma_n \coc\lor\}$, $\{\tau_0 \coc\llr ,..., \tau_m \coc\llr\}$ denote $X_0$, $X_1$ respectively. Then $\{\varphi(x,d_\xi):\xi\in X\}$ is equal to
\[
\{\psi_0(x,c_{\sigma_0}),...,\psi_0(x,c_{\sigma_n})\}\cup\{\psi_1(x,c_{\tau_0}),...,\psi_1(x,c_{\tau_n})\}.
\]
Since there are no $\eta,\nu\in\tree$ such that $\eta\coc\llr,\eta\coc\lor\coc\nu \in X$, we have
\begin{itemize}
\item[(i)] $\sigma_i\neq \tau_j$ for all $i\leq n$ and $j\leq m$,
\item[(ii)] $\sigma_i \not\trr \tau_j\coc\lor$ for all $i\leq n$ and $j\leq m$,
\item[(iii)] $\tau_i \not\trr\tau_j\coc\lor$ for all $i,j\leq m$ with $i\neq j$.
\end{itemize}
By choice of $\la c_\eta \ra_{\eta\in\tree}$, there exists $\sigma'_0,...,\sigma'_n,\tau'_0,...,\tau'_m\in\tree$ such that
\[
\la \sigma'_0,...,\sigma'_n,\tau'_0,...,\tau'_m \ra \approx_\alpha \la \sigma_0,...,\sigma_n,\tau_0,...,\tau_m \ra
\]
and
\[
b_{\sigma'_0} ... b_{\sigma'_n} b_{\tau'_0} ... b_{\tau'_m} \equiv_{\Delta} c_{\sigma_0} ... c_{\sigma_n} c_{\tau_0} ... c_{\tau_m}
\]
where $\Delta=\{\psi_0,\psi_1\}$. By $\approx_\alpha$, the conditions (i), (ii), (iii) above hold on $\sigma'_0,$ $...,$ $\sigma'_n,$ $\tau'_0,$ $...,$ $\tau'_m$. Let $X'\{\sigma'_0 \coc\lor,...,\sigma'_n \coc\lor,\tau'_0 \coc\llr,...,\tau'_m \coc\llr\}$. By (i), (ii), (iii), there are no $\eta',\nu'\in\tree$ such that $\eta'\coc\llr,\eta'\coc\lor\coc\nu' \in X'$. Thus the set
\[
\{\varphi(x,a_{\xi'}):\xi'\in X'\}
\]
is consistent since $\la a_\eta \ra_{\eta\in\tree}$ witnesses SOP$^{\rm fc}_1$ with $\varphi$. Thus
\[
\{\psi_0(x,b_{\sigma'_0}),...,\psi_0(x,b_{\sigma'_n})\}\cup\{\psi_1(x,b_{\tau'_0}),...,\psi_1(x,b_{\tau'_m})\}
\]
is consistent. Since $b_{\sigma'_0} ... b_{\sigma'_n} b_{\tau'_0} ... b_{\tau'_m}$ and $c_{\sigma_0} ... c_{\sigma_n} c_{\tau_0} ... c_{\tau_m}$ have the same $\Delta$ type, 
\[
\{\psi_0(x,c_{\sigma_0}),...,\psi_0(x,c_{\sigma_n})\}\cup\{\psi_1(x,c_{\tau_0}),...,\psi_1(x,c_{\tau_n})\}.
\]
is consistent as we desire. This proves claim.
\end{proof}
\end{claimx}
Thus we have a $\gamma$-indiscernible tree $\la d_{\eta}\ra_{\eta\in\tree}$ which witnesses SOP$^{\rm fc}_1$ with $\varphi$.

Define a map $h:\tree\to\tree$ by
\[
h(\eta)=\left\{
 \begin{array}{ll}
  \lr                             & \text{ if }\,\,\,\eta=\lr \\
  h(\eta^-)\coc\la 01 \ra & \text{ if }\,\,\,t(\eta)=0\\
  h(\eta^-)\coc\la 1 \ra  & \text{ if }\,\,\,t(\eta)=1,\\
 \end{array}
\right.
\]
and put $e_{\eta}=d_{h(\eta)}$ for each $\eta\in\tree$. As we observed in Theorem \ref{beta}, $\la e_\eta \ra_{\eta\in\tree}$ is $\beta$-indiscernible. It is easy to show that for all $\eta,\nu\in\tree$,
\[
(\eta\wedge\nu)\coc\llr=\nu~\text{~and~}~(\eta\wedge\nu)\coc\lor\tri\eta
\]
if and only if
\[
(h(\eta)\wedge h(\nu))\coc\llr=h(\nu)~\text{~and~}~(h(\eta)\wedge h(\nu))\coc\lor\tri h(\eta).
\]
Therefore $\la e_\eta \ra_{\eta\in\tree}$ witnesses SOP$^{\rm fc}_1$.
\end{proof}
\end{prop}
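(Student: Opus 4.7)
The plan is to adapt the two-stage strategy of Theorem \ref{beta} (first pass to a $\gamma$-indiscernible witness, then collapse to a $\beta$-indiscernible one via a structure-preserving map) to the SSOP$_1$ setting. First I would prove an SSOP$_1$-analog of Lemma \ref{gamma}: starting from a witness $\la a_\eta\ra_{\eta\in\tree}$ of SSOP$_1$ with $\varphi$, form the doubled tree $b_\eta = a_{\eta\coc\lor}\coc a_{\eta\coc\llr}$, extract by the modeling property of $\alpha$-indiscernibility an $\alpha$-indiscernible $\la c_\eta\ra_{\eta\in\tree}$ which is $\Delta$-locally based on $\la b_\eta\ra$, and then define $d_\eta = d'_{\lor\coc\eta}$ where $d'_\eta$ selects the left or right half of $c_{\eta^-}$ according to $t(\eta)$. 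With the standard auxiliary formulas $\psi_0(x;\bar y_0,\bar y_1)=\varphi(x,\bar y_0)\wedge(\bar y_1=\bar y_1)$ and $\psi_1(x;\bar y_0,\bar y_1)=\varphi(x,\bar y_1)\wedge(\bar y_0=\bar y_0)$, every assertion about the $d_\eta$'s translates into one about the $c_\eta$'s.

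What needs checking about $\la d_\eta\ra$ is (i) that it witnesses SSOP$_1$ with $\varphi$, and (ii) that it is $\gamma$-indiscernible. For (ii) the argument of Lemma \ref{gamma} transfers verbatim, since only the $\alpha$-indiscernibility of $\la c_\eta\ra$ is used there. For (i) the inconsistency direction is the content of Claim~1 of Lemma \ref{gamma}, so the task is the new consistency direction: given a finite $X\subseteq\tree$ containing no pair of the form $\eta\coc\llr,\,\eta\coc\lor\coc\nu$, partition $X = X_0\sqcup X_1$ according to the last coordinate, say $X_0=\{\sigma_i\coc\lor\}_{i\le n}$ and $X_1=\{\tau_j\coc\llr\}_{j\le m}$, and observe that the prohibition on bad pairs is equivalent to the purely structural list $\sigma_i\neq\tau_j$, $\sigma_i\not\trr\tau_j\coc\lor$, and $\tau_i\not\trr\tau_j\coc\lor$ (for $i\neq j$). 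These conditions are $\approx_\alpha$-invariant, so applying the $\Delta$-based approximation with $\Delta=\{\psi_0,\psi_1\}$ produces $\bar\sigma',\bar\tau'$ enjoying the same list; the SSOP$_1$ hypothesis on $\la a_\eta\ra$ then hands us consistency in the $b$-tree, which transfers back through the $\Delta$-equivalence.

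Once the $\gamma$-indiscernible witness $\la d_\eta\ra$ is in hand, I would apply the map from the proof of Theorem \ref{beta}: define $h:\tree\to\tree$ by $h(\lr)=\lr$, $h(\eta\coc\lor)=h(\eta)\coc\la 0,1\ra$, $h(\eta\coc\llr)=h(\eta)\coc\la 1\ra$, and set $e_\eta = d_{h(\eta)}$. The $\beta$-indiscernibility of $\la e_\eta\ra_{\eta\in\tree}$ is exactly Claim~1 inside the proof of Theorem \ref{beta} and depends only on $\la d_\eta\ra$ being $\gamma$-indiscernible. To confirm that $\varphi$ still witnesses SSOP$_1$ with $\la e_\eta\ra$, I would verify that $h$ transports the defining data of SSOP$_1$ faithfully in both directions, namely that for any $\eta,\nu,\mu\in\tree$,
\[
\eta\coc\llr = \nu \,\wedge\, \eta\coc\lor \tri \mu \quad\Longleftrightarrow\quad h(\eta)\coc\llr = h(\nu) \,\wedge\, h(\eta)\coc\lor \tri h(\mu),
\]
which is a short induction on the defining clauses of $h$ together with Remark \ref{preserving map}.

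The main obstacle is the consistency direction in the $\gamma$-stage: one must be sure that no bad pair $\eta'\coc\llr,\,\eta'\coc\lor\coc\nu'$ sneaks into the $\alpha$-approximating tuple $\bar\sigma'\coc\bar\tau'$ supplied by the modeling property. The point is that this avoidance is detectable at the level of the $\wedge$-closed structure of the index tuple, so $\approx_\alpha$ is strong enough; the remaining manipulations are the same book-keeping performed in Lemma \ref{gamma} and Theorem \ref{beta}, and no further new ingredient beyond the structural translation of SSOP$_1$ into $\alpha$-invariant data is required.
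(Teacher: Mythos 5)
Your proposal is correct and coincides essentially with the paper's own argument: the same two-stage reduction (first a $\gamma$-indiscernible witness via the doubled tree $b_\eta=a_{\eta\coc\lor}\coc a_{\eta\coc\llr}$, the modeling property, and the formulas $\psi_0,\psi_1$, then the map $h$ and $e_\eta=d_{h(\eta)}$ as in Theorem \ref{beta}), with the new consistency step handled exactly as in the paper by translating the absence of bad pairs into the $\approx_\alpha$-invariant conditions $\sigma_i\neq\tau_j$, $\sigma_i\not\trr\tau_j\coc\lor$, $\tau_i\not\trr\tau_j\coc\lor$, and the final verification amounting, as in the paper, to the fact that $h$ preserves and reflects the configuration $(\eta\wedge\nu)\coc\llr=\nu$, $(\eta\wedge\nu)\coc\lor\tri\eta$ via Remark \ref{preserving map}. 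No gaps beyond the level of detail the paper itself leaves implicit.
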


\begin{prop}\label{AT->fSOP1}
In a theory $T$, if a formula $\varphi$ has ATP, then $\varphi$ has SOP$^{\;\!fc}_1$.
\begin{proof}
Suppose an antichain tree $\la\varphi, \la a_\eta\ra_{\eta\in\tree}\ra$ is given. Define a map $g:\tree\to\tree$ by

\[
g(\eta)=\left\{
 \begin{array}{ll}
  \lor                             & \text{ if }\,\,\,\eta=\lr \\
  g(\eta^-)^-\coc\la 1000 \ra & \text{ if }\,\,\,t(\eta)=0\\
  g(\eta^-)^-\coc\la 10 \ra & \text{ if }\,\,\,t(\eta)=1.\\
 \end{array}
\right.
\]
Let $b_\eta=a_{g(\eta)}$ for each $\eta\in\tree$. Then $\la\varphi,\la b_\eta \ra_{\eta\in\tree}\ra$ is a witness of SOP$^{\rm fc}_1$.
\end{proof}
\end{prop}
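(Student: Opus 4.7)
The plan is to show that the map $g\colon \tree\to\tree$ defined in the statement sends any $X\subseteq\tree$ without an SSOP$_1$-bad pair---that is, without simultaneous occurrence of some $\eta\coc\llr$ and $\eta\coc\lor\coc\nu$---to an antichain in $\tree$, and sends any $X$ that does contain such a pair to a set with two distinct $\tri$-comparable elements. Once this is verified, the antichain tree property of $\la\varphi,\la a_\eta\ra_{\eta\in\tree}\ra$ immediately yields that $\la\varphi,\la b_\eta\ra_{\eta\in\tree}\ra$ with $b_\eta=a_{g(\eta)}$ witnesses SSOP$_1$.

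I would begin by recording three structural properties of $g$, each by induction on $l(\eta)$: (i) $g(\eta)$ always ends in $\la 0 \ra$; (ii) both $g(\eta\coc\lor)=g(\eta)^-\coc\la 1000\ra$ and $g(\eta\coc\llr)=g(\eta)^-\coc\la 10\ra$ extend $g(\eta)^-\coc\la 1\ra$, and in particular $g(\eta\coc\llr)\trn g(\eta\coc\lor)$; (iii) for every $\eta\in\tree$ and every nonempty $\nu\in\tree$, $g(\eta\coc\nu)$ extends $g(\eta)^-\coc\la 1\ra$. Property (iii) follows from (ii) together with the observation that each recursive step strictly increases length, so trailing $^-$ operations never erase the distinguishing $\la 1\ra$.

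The heart of the argument is then the equivalence: for distinct $\sigma,\tau\in\tree$, $g(\sigma)\tri g(\tau)$ iff $\{\sigma,\tau\}=\{\rho\coc\llr,\,\rho\coc\lor\coc\nu\}$ for some $\rho,\nu\in\tree$. The ``if'' direction follows from (ii) and (iii), which together give $g(\rho\coc\llr)=g(\rho)^-\coc\la 10\ra \trn g(\rho\coc\lor\coc\nu)$ and a length mismatch. For the ``only if'' direction I would split on the position of $\sigma,\tau$ in $\tree$. If $\sigma\trn\tau$, then (iii) forces $g(\tau)$ to extend $g(\sigma)^-\coc\la 1\ra$ while (i) gives $g(\sigma)=g(\sigma)^-\coc\la 0\ra$, producing a disagreement at position $l(g(\sigma)^-)$. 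If $\sigma$ and $\tau$ are $\tree$-incomparable with meet $\rho$, WLOG write $\sigma=\rho\coc\llr\coc\sigma'$ and $\tau=\rho\coc\lor\coc\tau'$; when $l(\sigma')\geq 1$, property (iii) forces $g(\sigma)$ to extend $g(\rho)^-\coc\la 11\ra$, whereas $g(\tau)$ begins with $g(\rho)^-\coc\la 10\ra$, so they disagree at position $l(g(\rho)^-)+1$. Thus comparability forces $\sigma'=\lr$, i.e., $\sigma=\rho\coc\llr$.

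The main obstacle is simply the careful prefix bookkeeping in the incomparable-meet case; the blocks $\la 1000\ra$ and $\la 10\ra$ are engineered precisely so that, among the branching configurations at each node, only the one matching the SSOP$_1$-bad pattern yields a $\tri$-comparable image. With the main lemma in hand, for any $X\subseteq\tree$ the set $\{g(\eta):\eta\in X\}$ is an antichain in $\tree$ iff $X$ contains no bad pair; by the antichain-tree hypothesis this translates directly to the SSOP$_1$-consistency condition for $\{\varphi(x,b_\eta):\eta\in X\}$, completing the proof.
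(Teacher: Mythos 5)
Your proposal is correct and takes essentially the paper's own route: the paper just defines the map $g$ and asserts the conclusion, and your prefix analysis (the three structural properties of $g$ and the key lemma that $g(\sigma)$, $g(\tau)$ are $\tri$-comparable for distinct $\sigma,\tau$ exactly when $\{\sigma,\tau\}=\{\rho\coc\llr,\rho\coc\lor\coc\nu\}$) is precisely the verification the paper leaves implicit. Nothing to correct.
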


Now we are ready to modify the Claim \ref{claim}.

\begin{prop} \label{modifi}
Suppose that $\varphi(x,y)$ satisfies SOP$_1$, but there is no $n\in\omega$ such that the formula $\varphi_n(x,y_0,...,y_{n-1})=\bigwedge_{k<n}\varphi(x,y_k)$ satisfies SOP$_2$. Then there are witnesses $\langle a_\eta:\eta\in\null^{\omega>}2\rangle$ for $\varphi(x,y)$ satisfying SOP$_1$ which in addition satisfy
\begin{enumerate}
\item [(i)] $\la a_\eta\ra_{\eta\in\tree}$ witnesses SOP$^{\;\!fc}_1$ with $\varphi$,
\item [(ii)] $\la a_\eta\ra_{\eta\in\tree}$ is $\beta$-indiscernible.
\end{enumerate}
\begin{proof}
By Theorem \ref{antichaintree}, there exists an antichain tree. By Proposition \ref{AT->fSOP1}, there exists a witness of SOP$^{\rm fc}_1$. And by Proposition \ref{beta preserves ssop1}, there exists a $\beta$-indiscernible witness of SOP$^{\rm fc}_1$.
\end{proof}
\end{prop}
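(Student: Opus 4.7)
My plan is to chain together the three main tools developed earlier in the paper. The hypothesis of the proposition is exactly the hypothesis of Theorem~\ref{antichaintree}, so as a first step I would apply that theorem to obtain a tree $\la b_\eta\ra_{\eta\in\tree}$ such that $\la\varphi(x,y),\la b_\eta\ra_{\eta\in\tree}\ra$ forms an antichain tree. This gives me a parameter tree in which consistency of $\{\varphi(x,b_\eta):\eta\in X\}$ is controlled exactly by whether $X$ is an antichain.

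Next, I would use Proposition~\ref{AT->fSOP1} to pass from the antichain tree to a witness of SSOP$_1$. Concretely, the map $g:\tree\to\tree$ defined there (sending $\eta\mapsto g(\eta^-)^-\coc\la 1000\ra$ when $t(\eta)=0$ and $\eta\mapsto g(\eta^-)^-\coc\la 10\ra$ when $t(\eta)=1$) transforms the SOP$_1$-style ``$\eta\coc\llr$ versus $\eta\coc\lor\coc\nu$'' inconsistency pattern into a statement about comparability of images in the original antichain tree. The point is that for any $X\subseteq\tree$ containing no pair of the form $\eta\coc\llr$, $\eta\coc\lor\coc\nu$, the image $g(X)$ is an antichain in $\tree$, so $\{\varphi(x,b_{g(\eta)}):\eta\in X\}$ is consistent; conversely, whenever such a pair does appear in $X$, images become comparable and the set is inconsistent. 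This yields a tree $\la a'_\eta\ra_{\eta\in\tree}\coloneqq\la b_{g(\eta)}\ra_{\eta\in\tree}$ witnessing SSOP$_1$ with $\varphi$, giving condition (i).

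Finally, I would invoke Proposition~\ref{beta preserves ssop1} on $\la a'_\eta\ra_{\eta\in\tree}$ to obtain a $\beta$-indiscernible tree $\la a_\eta\ra_{\eta\in\tree}$ still witnessing SSOP$_1$ with $\varphi$, giving condition (ii) simultaneously with (i). Since any SSOP$_1$ witness is in particular a SOP$_1$ witness, this is also the desired SOP$_1$ witness.

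The only delicate point in the chain is Proposition~\ref{beta preserves ssop1}, since $\beta$-indiscernibilization via the modeling property is formulated for $\alpha$-similarity and could in principle break the fine-grained SSOP$_1$ consistency pattern (recall that as noted in Remark~\ref{comparing sop1sop2ssop1}, SSOP$_1$ prescribes consistency in precisely the cases SOP$_1$ leaves ambiguous). But this is already handled in the proof of that proposition, which mirrors the $\gamma$-to-$\beta$ argument of Lemma~\ref{gamma} and Theorem~\ref{beta} and checks that the map $h:\tree\to\tree$ used to produce a $\beta$-indiscernible tree preserves the meet structure in such a way that $X$ contains no pair $\eta\coc\llr$, $\eta\coc\lor\coc\nu$ if and only if $h(X)$ does not. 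So no additional work is required here and the chain closes.
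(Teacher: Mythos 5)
Your proposal is correct and follows exactly the paper's own proof: apply Theorem \ref{antichaintree} to get an antichain tree, then Proposition \ref{AT->fSOP1} to convert it into an SSOP$_1$ witness, and finally Proposition \ref{beta preserves ssop1} to obtain a $\beta$-indiscernible SSOP$_1$ witness, which is in particular an SOP$_1$ witness. The extra commentary you give on the internals of the cited propositions is harmless since you invoke them as stated, so nothing further is needed.
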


In fact ATP and SOP$^{\rm fc}_1$ are equivalent. The following proposition shows this along with Proposition \ref{AT->fSOP1}.

\begin{prop}\label{AT<-fSOP1}
In a theory $T$, if a formula $\varphi$ has SOP$^{\;\!fc}_1$, then $\varphi$ has ATP.
\begin{proof}
Suppose a witness of SOP$^{\rm fc}_1$ $\la\varphi, \la a_\eta\ra_{\eta\in\tree}\ra$ is given. Define a map $g:\tree\to\tree$ by

\[
g(\eta)=\left\{
 \begin{array}{ll}
  \llr                             & \text{ if }\,\,\,\eta=\lr \\
  g(\eta^-)^-\coc\la 001 \ra & \text{ if }\,\,\,t(\eta)=0\\
  g(\eta^-)^-\coc\la 011 \ra & \text{ if }\,\,\,t(\eta)=1.\\
 \end{array}
\right.
\]
Let $b_\eta=a_{g(\eta)}$ for each $\eta\in\tree$. Then $\la\varphi,\la b_\eta \ra_{\eta\in\tree}\ra$ is an antichain tree.
\end{proof}
\end{prop}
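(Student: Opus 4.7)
The plan is to verify that the map $g\colon\tree\to\tree$ defined in the statement encodes the antichain/comparability structure of $\tree$ into the forbidden-pair pattern $(\sigma\coc\llr,\sigma\coc\lor\coc\mu)$ that governs SSOP$_1$. Once this translation is in hand, setting $b_\eta=a_{g(\eta)}$ and invoking the SSOP$_1$ hypothesis on $\la\varphi,\la a_\eta\ra_{\eta\in\tree}\ra$ gives at once that $\{\varphi(x,b_\rho):\rho\in X\}$ is consistent if and only if $X$ is an antichain, i.e., that $\la\varphi,\la b_\eta\ra_{\eta\in\tree}\ra$ is an antichain tree.

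First I would record two elementary properties of $g$, both by induction on $l(\eta)$. (P1) $t(g(\eta))=1$, equivalently $g(\eta)=g(\eta)^-\coc\llr$; this is immediate from $g(\lr)=\llr$ together with the fact that both recursive branches append a suffix ending in $1$. (P2) For each $c\leq 1$, $g(\eta\coc\la c\ra)=g(\eta)^-\coc\la 0,c,1\ra$; directly from the definition. A consequence of (P1) and (P2) is that whenever $\eta\trn\nu$, the string $g(\nu)$ properly extends $g(\eta)^-\coc\lor$.

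With these in hand, the two directions are quick. For the comparable case, fix $\eta\trn\nu$ in $\tree$. By (P1), $g(\eta)=g(\eta)^-\coc\llr$, and by the consequence of (P2) above, $g(\nu)=g(\eta)^-\coc\lor\coc\mu$ for some nonempty $\mu$; hence $\sigma:=g(\eta)^-$ witnesses that $\{g(\eta),g(\nu)\}$ is a forbidden pair, so $\{\varphi(x,b_\eta),\varphi(x,b_\nu)\}$ is inconsistent by SSOP$_1$. For the antichain case, take any antichain $X\subseteq\tree$ and any distinct $\eta,\nu\in X$ with meet $\xi=\eta\wedge\nu$; WLOG $\xi\coc\lor\tri\eta$ and $\xi\coc\llr\tri\nu$. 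Applying (P2) at $\xi$, the string $g(\eta)$ begins with $g(\xi\coc\lor)=g(\xi)^-\coc\la 001\ra$ (hence with $g(\xi)^-\coc\la 00\ra$), while $g(\nu)$ begins with $g(\xi)^-\coc\la 01\ra$; consequently the meet $g(\eta)\wedge g(\nu)$ equals $g(\xi)^-\coc\lor$, of length exactly $l(g(\xi))$. If $g(\eta)=\sigma\coc\llr$ then $\sigma=g(\eta)^-$ has length at least $l(g(\xi))+1>l(g(\xi))$, so $\sigma$ cannot be a prefix of $g(\nu)$, ruling out $g(\nu)=\sigma\coc\lor\coc\mu$; the symmetric case is identical. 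Thus $g(X)$ contains no forbidden pair and $\{\varphi(x,b_\rho):\rho\in X\}$ is consistent by SSOP$_1$.

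There is no deep obstacle; the argument is a short combinatorial verification. The one point requiring care is the length computation in the antichain step: the design of $g$ forces the split between $g(\eta)$ and $g(\nu)$ (for $\eta\perp\nu$) to occur at position $l(g(\xi))$, strictly inside both $g(\eta)$ and $g(\nu)$, which is precisely what prevents the pattern $(\sigma\coc\llr,\sigma\coc\lor\coc\mu)$ from ever arising in the image of an antichain.
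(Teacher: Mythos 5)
Your verification follows exactly the route the paper intends (the paper states the map $g$ and asserts the conclusion without details), and the overall argument is sound: comparable pairs are sent to a forbidden pair $(\sigma\coc\llr,\sigma\coc\lor\coc\mu)$ with $\sigma=g(\eta)^-$, while images of antichains contain no such pair, so SSOP$_1$ for $\la\varphi,\la a_\eta\ra_{\eta\in\tree}\ra$ gives both directions for $\la\varphi,\la b_\eta\ra_{\eta\in\tree}\ra$. One intermediate assertion in your antichain step is false as stated, though: for $\xi\coc\lor\trn\eta$ the string $g(\eta)$ does \emph{not} begin with $g(\xi\coc\lor)=g(\xi)^-\coc\la 0,0,1\ra$; indeed $g$ never preserves $\tri$ (that is the whole point of the construction --- by (P1) and (P2), $g(\rho)\perp g(\eta)$ whenever $\rho\trn\eta$), and e.g. $g(\la 0,0\ra)=\la 0,0,0,0,1\ra$ does not extend $g(\lor)=\la 0,0,1\ra$. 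What is true, and what you actually use, is the parenthetical weaker claim: if $\xi\coc\la c\ra\tri\eta$ then $g(\eta)$ extends $g(\xi\coc\la c\ra)^-=g(\xi)^-\coc\la 0,c\ra$; this follows from your own (P2) by the same induction you used for the ``consequence'' (split into the cases $\eta=\xi\coc\la c\ra$ and $\xi\coc\la c\ra\trn\eta$). With that correction the meet computation $g(\eta)\wedge g(\nu)=g(\xi)^-\coc\lor$ and the length comparison $l(g(\eta)^-)\geq l(g(\xi))+1$ go through as you wrote, so the proof is complete after this one-line repair.
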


\begin{cor}
In a theory $T$, a formula $\varphi$ has SOP$^{\;\!fc}_1$ if and only if it has ATP.
\end{cor}

\section{An example of antichain tree}\label{example of at}

In section \ref{secAT}, we showed the existence of an antichain tree in SOP$_1$-NSOP$_2$ context. It is natural to ask if an antichain tree exists without classification theoretical hypothesis. We construct a structure of relational language whose theory has a formula $\varphi(x,y)$ which forms an antichain tree and $\bigwedge_{i<n}\varphi(x,y_i)$ do not witness SOP$_2$ for all $n\in\omega$. Note that $\varphi$ also witnesses SOP$_1$ by Proposition \ref{AT->SOP1}. So this theory witnesses SOP$_1$-NSOP$_2$ at the level of formulas.

\begin{defn}
An antichain $X\subseteq \tree$ is called a maximal antichain if there is no antichain $Y\subseteq \tree$ such that $X\subsetneq Y$.
\end{defn}

We begin the construction with language $\mathcal{L}=\{R\}$ where $R$ is a binary relation symbol.
For each $n\in\omega$, let $\alpha_n \in \omega$ be the number of all maximal antichains in $^{n>}2$,
and $\beta_n$ be the set of all maximal antichains in $^{n>}2$. We can choose a bijection from $\alpha_n$ to $\beta_n$ for each $n\in\omega$, say $\mu_n$. For each $n\in\omega$, let $A_n$ and $B_n$ be finite sets such that $|A_n|=\alpha_n$ and $|B_n|=|{^{n>}}2|$. We denote their elements by
\begin{itemize}
\item[] $A_n =\{a^n _{l} : l < \alpha_n \}$, 
\item[] $B_n =\{b^{n} _{\eta} : \eta \in {^{n>}}2\}$.
\end{itemize}
Let $N_n$ be the disjoint union of $A_n$ and $B_n$ for each $n\in\omega$.

For each $n\in \omega$, let ${\mathcal{C}}_n$ be an $\mathcal{L}$-structure such that ${\mathcal{C}}_n=\la C_n; R^{{\mathcal{C}}_n}\ra$, where $R^{{\mathcal{C}}_n}=\{\la a^n _l, b^n _{\eta}\ra\in A_n {\times} B_n : \eta \in \mu_n (l)\}$. For each $n\in\omega$, let $\iota_n$ be a map from $\alpha_n\cup {}^{n>}2$ to $\alpha_{n+1}\cup{} ^{n+1>}2$ which maps $c\mapsto c$ for all $c\in\alpha_n\cup {}^{n>}2$, and define $\iota_n ^* : C_n \to C_{n+1}$ by $a^n _l \mapsto a^{n+1}_{\iota_n (l)}$ and $b^n_\eta \mapsto b^{n+1}_{\iota_n(\eta)}$.
Then $\iota_n ^*$ is an embedding. So we can regard $\mathcal{C}_n$ as a substructure of $\mathcal{C}_{n+1}$ with respect to $\iota^*_n$. Let $\mathcal{C}$ be $\bigcup_{n<\omega}\mathcal{C}_n$, $A$ and $B$ denote $\bigcup_{n<\omega}A_n$ and $\bigcup_{n<\omega}B_n$ respectively.

Now we check some properties of ${\rm Th}(\mathcal{C})$. Recall that $\{\nu\in\tree:\nu\tri\eta\}$ is linearly ordered by $\tri$ for each $\eta$. This fact will be used frequently to prove the following propositions and remarks.

\begin{prop}\label{AT in Th(M)}
$R(x,y)$ forms an antichain tree in $\rm{Th}(\mathcal{C})$. 
\begin{proof}
By compactness, it is enough to show that for any $n\in\omega$, there exists $\la c_{\eta} \ra_{\eta\in\tren}$ such that $\{R(x,c_\eta):\eta\in X\}$ is consistent if and only if $X$ is an antichain in $\tren$. Fix $n\in\omega$ and let $c_\eta=b^n_\eta$ for each $\eta\in\tren$. 

First we show that if $X\subseteq\tren$ is an antichain, then $\{R(x,c_\eta):\eta\in X\}$ is consistent. Since $X$ is an antichain, there exists a maximal antichain $\bar{X}$ in $\tren$ containing $X$. Let $l=\mu_n^{-1}(\bar{X})$. Then $\la a_l^n, b_\eta^n \ra \in R^{\mathcal{C}_n}$ for all $\eta\in\mu_n(l)=\mu_n(\mu_n^{-1}(\bar{X}))=\bar{X}$. Thus $\{R(x,c_\eta):\eta\in X\}$ is consistent. 

For the converse, suppose $X\subseteq\tren$ is not an antichain. Then there exist $\eta,\nu\in X$ such that $\eta\not\perp\nu$. If $\{R(x,c_\eta):\eta\in X\}$ is consistent, then $\{R(x,b_\eta^n), R(x,b_\nu^n)\}$ is consistent. So there exists $l<\alpha_n$ such that $\mathcal{C}\models R(a_l^n,b_\eta^n)\wedge R(a_l^n,b_\nu^n)$. Thus $\eta,\nu\in \mu_n(l)$. There exists an antichain in $\tren$ containing $\eta$ and $\nu$. It is impossible by the choice of $\eta$ and $\nu$.
\end{proof}
\end{prop}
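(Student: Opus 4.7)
My plan is to use compactness to reduce the claim to each finite level $n\in\omega$: it suffices to exhibit, for every $n$, a tuple $\la c_\eta\ra_{\eta\in\tren}$ inside $\mathcal{C}$ such that $\{R(x,c_\eta):\eta\in X\}$ is consistent (i.e., realized in $\mathcal{C}$) if and only if $X\subseteq\tren$ is an antichain. The natural candidate is $c_\eta := b^n_\eta$, since $\mathcal{C}_n$ was built precisely so that the $a$-elements code maximal antichains in $\tren$.

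For the direction ``$X$ antichain $\Rightarrow$ consistent'', I would enlarge $X$ to a maximal antichain $\bar X$ of $\tren$ (every antichain in a finite tree sits inside a maximal one). Then $\bar X=\mu_n(l)$ for $l=\mu_n^{-1}(\bar X)$, and the definition of $R^{\mathcal{C}_n}$ makes $a^n_l$ a common realizer of $\{R(x,b^n_\eta):\eta\in\bar X\}$, hence in particular of $\{R(x,c_\eta):\eta\in X\}$.

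For the converse, suppose $X$ is not an antichain, so some $\eta,\nu\in X$ are comparable. If $c\in\mathcal{C}$ realized $\{R(x,b^n_\eta),R(x,b^n_\nu)\}$, then since $R^{\mathcal{C}}$ only pairs $A$-elements with $B$-elements, $c$ must be of the form $a^m_l$ for some $m\geq n$ and $l<\alpha_m$, and both $\eta,\nu$ would lie in $\mu_m(l)$ (under the obvious inclusion $\tren\hookrightarrow{}^{m>}2$ via $\iota$). But $\mu_m(l)$ is an antichain in $^{m>}2$, contradicting $\eta\not\perp\nu$.

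The only mildly delicate step is making sure the second direction accounts for realizers drawn from higher levels $\mathcal{C}_m$ (not just $\mathcal{C}_n$); this is immediate because the embeddings $\iota_n^*$ preserve $R$ and do not alter the tree-theoretic relations between the (fixed) nodes $\eta,\nu\in\tren$, so ``$\eta,\nu\in\mu_m(l)$'' still forces them to be incomparable in the tree. With this observation the proof closes cleanly, and compactness then promotes the finite-level witnesses to the desired $\la c_\eta\ra_{\eta\in\tree}$.
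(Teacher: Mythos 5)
Your proof is correct and follows essentially the same route as the paper's: reduce by compactness to the finite trees, set $c_\eta=b^n_\eta$, extend a given antichain to a maximal one coded by some $a^n_l$ for the consistency direction, and for the converse note that any common $R$-realizer must be an $A$-element whose associated maximal antichain would have to contain two comparable nodes. Your additional care about realizers coming from higher levels $\mathcal{C}_m$ is a point the paper glosses over (it simply asserts the witness is some $a^n_l$ with $l<\alpha_n$), and you handle it correctly via the $R$-preserving embeddings $\iota_n^*$.
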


\begin{rmk}\label{properties of M 1} 
There is no $c_0, c_1, c_2, c_3\in C$ such that
\[
\begin{array}{rl}
\mathcal{C}\models & c_0 \neq c_1 \wedge c_2 \neq c_3 \\
                   & \wedge \exists x (R(x,c_0)\wedge R(x,c_1)) \wedge \exists x (R(x,c_2)\wedge R(x,c_3)) \\
                   & \wedge \neg \exists x (R(x,c_0)\wedge R(x,c_2)) \wedge \neg \exists x (R(x,c_0)\wedge R(x,c_3)) \\
                   & \wedge \neg \exists x (R(x,c_1)\wedge R(x,c_2)) \wedge \neg \exists x (R(x,c_1)\wedge R(x,c_3)).
\end{array}
\]
\begin{proof}
Suppose such $c_0,c_1,c_2,c_3$ exist. We may assume they are $b^n_{\eta_0}, b^n_{\eta_1}, b^n_{\eta_2}, b^n_{\eta_3}$ for some $n\in\omega$ and $\eta_0, \eta_1, \eta_2, \eta_3\,{\in}\,\tren$. Since $\{R(x,b^n_{\eta_0}),$ $R(x,b^n_{\eta_1})\}$ and $\{R(x,b^n_{\eta_2}),$ $R(x,b^n_{\eta_3})\}$ are consistent, $b^n_{\eta_0}\perp b^n_{\eta_1}$ and $b^n_{\eta_2}\perp b^n_{\eta_3}$ in $\tren$. Since $\{R(x,b^n_{\eta_0}),$ $R(x,b^n_{\eta_2})\}$, $\{R(x,b^n_{\eta_0}),$ $R(x,b^n_{\eta_3})\}$, $\{R(x,b^n_{\eta_1}),$ $R(x,b^n_{\eta_2})\}$, $\{R(x,b^n_{\eta_1}),$ $R(x,b^n_{\eta_3})\}$ are inconsistent, $b^n_{\eta_0}\not\perp b^n_{\eta_2}$, $b^n_{\eta_0}\not\perp b^n_{\eta_3}$, $b^n_{\eta_1}\not\perp b^n_{\eta_2}$, $b^n_{\eta_1}\not\perp b^n_{\eta_3}$. Thus $b^n_{\eta_2},b^n_{\eta_3}\tri b^n_{\eta_0}\wedge b^n_{\eta_1}$. Since $\{\nu\in\tren : \nu\tri \nu'\wedge\nu''\}$ is linearly ordered by $\tri$ for all $\nu'\neq\nu''$, we have $b^n_{\eta_2}\not\perp b^n_{\eta_3}$. It is a contradiction.
\end{proof}
\end{rmk}

\begin{rmk}\label{properties of M 2} 
 For all $c_0,...,c_{n-1}\in C$, if $\mathcal{C}\models\neg\exists x (R(x,c_0)\wedge \cdots \wedge R(x,c_n))$, then there exist $i<j< n$ such that $\mathcal{C}\models\neg \exists x (R(x,c_i)\wedge R(x,c_j)$.
\begin{proof}
Suppose $c_0,...,c_{n-1}\in C$ and $\mathcal{C}\models \exists x(R(x,c_i)\wedge R(x,c_j)$ for all $i,j<n$. We may assume $c_i=b^k_{\eta_i}$ for some $k\in\omega$ and $\eta_i \in \trek$ for each $i<n$. As we observed in proof of Proposition \ref{AT in Th(M)}, $\la R(x,y),\la b^k_{\eta}\ra_{\eta\in\trek}\ra$ forms an antichain tree with height $k$. Since $\{R(x,b^k_{\eta_i}),R(x,b^k_{\eta_j})\}$ is consistent for each $i.j<n$, $\{\eta_0,...,\eta_{n-1}\}$ is pairwisely incomparable. Thus $\{\eta_0,...,\eta_{n-1}\}$ is an antichain in $\trek$. Hence $\{(R(x,b^k_{\eta_0}),... R(x,b^k_{\eta_{n-1}})\}$ is consistent in $\mathcal{C}_k$, so it is consistent in $\mathcal{C}$. Therefore $\mathcal{C}\models\exists x (R(x,c_0)\wedge \cdots \wedge R(x,c_n))$.
\end{proof}
\end{rmk}

\begin{prop}
$\bigwedge_{i<n}R(x,y_i)$ does not witness SOP$_2$ for all $n\in\omega$.
\begin{proof} 
To get a contradiction we assume there exists $\la \bar{c}_\eta\ra_{\eta\in\tree}$ which witnesses SOP$_2$ with $\bigwedge_{i<n} R(x,y_i)$, where $\bar{c}_\eta=\la c^0_\eta,...,c^{n-1}_\eta\ra$ for $c_\eta^i\in\mathcal{M}$,  $\mathcal{M}$ is a monster model of $\rm{Th}(\mathcal{C})$. We may assume $\la \bar{c}_\eta\ra_{\eta\in\tree}$ is strong indiscernible. Since
\[
\{ R(x,c_{\lor}^0), ..., R(x,c_{\lor}^{n-1}), R(x,c_{\llr}^0), ..., R(x,c_{\llr}^{n-1})\}
\]
is inconsistent, there exist $i,j<n$ such that $\{R(x,c_{\lor}^i), R(x,c_{\llr}^j)\}$ is inconsistent by Remark \ref{properties of M 2}. By the indiscernibility, $\{R(x,c_{\lor}^i),$ $R(x,c_{\la 11 \ra}^j)\}$, $\{R(x,c_{\la 00 \ra}^i),$ $R(x,c_{\llr}^j)\}$, $\{R(x,c_{\la 00\ra}^i),$ $R(x,c_{\la 11\ra}^j)\}$ are also inconsistent. It violates Remark \ref{properties of M 1} since $\{R(x,c_{\lor}^i),$ $R(x,c_{\la 00 \ra}^i)\}$, $\{R(x,c_{\llr}^j),$ $R(x,c_{\la 11\ra}^j)\}$ are consistent.
\end{proof}
\end{prop}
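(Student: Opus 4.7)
My plan is to argue by contradiction: assume $\psi(x,\bar y)=\bigwedge_{k<n}R(x,y_k)$ satisfies SOP$_2$, pass to a strongly indiscernible witness $\la \bar c_\eta\ra_{\eta\in\tree}$ with $\bar c_\eta=\la c^0_\eta,\dots,c^{n-1}_\eta\ra$ by Lemma \ref{str indisc tree on SOP2}, and then exhibit four parameters in $C$ violating Remark \ref{properties of M 1}. The structural facts about $\rm{Th}(\mathcal{C})$ that drive the proof are exactly Remarks \ref{properties of M 1} (no ``bipartite'' inconsistency with two consistent edges) and \ref{properties of M 2} (inconsistency of a conjunction of $R$-formulas is detected pairwise).

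The first step is to exploit the inconsistency of $\psi(x,\bar c_\lor)\wedge\psi(x,\bar c_\llr)$ at the incomparable pair $(\lor,\llr)$. Expanding, the $2n$-set $\{R(x,c^k_\lor),R(x,c^k_\llr):k<n\}$ is inconsistent, so by Remark \ref{properties of M 2} some pair is already inconsistent. Branches are consistent, so the sets $\{R(x,c^k_\lor):k<n\}$ and $\{R(x,c^k_\llr):k<n\}$ are individually consistent; hence the $2$-inconsistent pair must cross between $\lor$ and $\llr$. So there exist $i,j<n$ with $\{R(x,c^i_\lor),R(x,c^j_\llr)\}$ inconsistent. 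Since the pairs $(\lor,\llr)$, $(\lor,\la 11\ra)$, $(\la 00\ra,\llr)$, $(\la 00\ra,\la 11\ra)$ all share the same $L_0$-quantifier-free type (incomparable, meet $\lr$, first $<_{\text{lex}}$ second), strong indiscernibility upgrades this to inconsistency of all four crossing pairs $\{R(x,c^i_\sigma),R(x,c^j_\tau)\}$ with $\sigma\in\{\lor,\la 00\ra\}$ and $\tau\in\{\llr,\la 11\ra\}$.

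The main obstacle — and the only delicate step — is verifying the distinctness hypothesis of Remark \ref{properties of M 1}, namely $c^i_\lor\neq c^i_{\la 00\ra}$ and $c^j_\llr\neq c^j_{\la 11\ra}$. Suppose, for contradiction, that $c^i_\lor=c^i_{\la 00\ra}$. Every pair $\eta\lhd\nu$ in $\tree$ carries the same $L_0$-quantifier-free type, so strong indiscernibility yields $c^i_\eta=c^i_\nu$ for all such pairs. In particular $c^i_\lr=c^i_\lor$ and $c^i_\lr=c^i_\llr$, forcing $c^i_\lor=c^i_\llr$. But then $\{R(x,c^i_\lor),R(x,c^j_\llr)\}=\{R(x,c^i_\llr),R(x,c^j_\llr)\}$ is a $2$-subset of the consistent set $\{R(x,c^k_\llr):k<n\}$, contradicting the inconsistency derived above. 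A symmetric argument gives $c^j_\llr\neq c^j_{\la 11\ra}$.

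Finally, the four parameters $c^i_\lor,c^i_{\la 00\ra},c^j_\llr,c^j_{\la 11\ra}$ now satisfy the hypotheses forbidden by Remark \ref{properties of M 1}: the two ``inner'' pairs are consistent because they lie on the branches $\lor\tri\la 00\ra$ and $\llr\tri\la 11\ra$ respectively (along which the full conjunction is consistent), while the four ``crossing'' pairs are inconsistent by the previous paragraph. This contradicts Remark \ref{properties of M 1}, completing the proof.
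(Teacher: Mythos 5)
Your proof is correct and follows essentially the same route as the paper: extract a crossing inconsistent pair via Remark \ref{properties of M 2}, propagate it to all four crossing pairs $(\lor,\llr)$, $(\lor,\la 11\ra)$, $(\la 00\ra,\llr)$, $(\la 00\ra,\la 11\ra)$ by strong indiscernibility, and contradict Remark \ref{properties of M 1} using consistency along the branches $\lor\tri\la 00\ra$ and $\llr\tri\la 11\ra$. The only difference is that you explicitly check points the paper leaves implicit --- that the $2$-inconsistent pair must cross between the two tuples, and that $c^i_\lor\neq c^i_{\la 00\ra}$ and $c^j_\llr\neq c^j_{\la 11\ra}$ so Remark \ref{properties of M 1} applies --- and your verifications of these are sound.
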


But ${\rm{Th}}(\mathcal{C})$ has a witness of SOP$_2$. Define $\varphi(x,y)$ by
\[
\begin{array}{l}
x\neq y \\
\wedge \neg \exists w (R(w,x) \wedge R(w,y)) \\
\wedge \exists z ( \exists w (R(w,x) \wedge R(w,z))\wedge  \neg\exists w (R(w,y) \wedge R(w,z)))
\end{array}
\]
Then $\varphi$ says ``$y$ is a predecessor of $x$ in the set of parameters" ({\it{i.e.}}, $y \trn x$).
For each $\eta\in\tree$, let $b_\eta=b_\eta^n$ for some $n\in\omega$. By the constructions of $\mathcal{C}$, $b_\eta$ is well-defined. Then $\{\varphi(x,b_\eta),\varphi(x,b_\nu)\}$ is inconsistent whenever $\eta\perp\nu$. By compactness $\{\varphi(x,b_{\eta\res n}):n<\omega\}$ is consistent for each $\eta\in{^\omega}2$. Thus $\la \varphi(x,y),\la b_\eta \ra_{\eta\in\tree}\ra$ witnesses SOP$_2$ modulo ${\rm{Th}}(\mathcal{C})$.

\subsection*{Acknowledgement} The authors thank the anonymous referee for the advice that has significantly improved our paper. We also thank Jan Dobrowolski for his kind comments on the first version of the paper.

\newcommand{\Addresses}{{
\bigskip
  
\footnotesize
JinHoo Ahn \par
\nopagebreak\textsc{School of Computational Sciences, Korea Institute for Advanced Study, 85 Hoegiro, Dongdaemun-gu, 02455 Seoul, South Korea}\par
\nopagebreak\textit{E-mail address}: \texttt{jinhooahn@kias.re.kr}

\bigskip

\footnotesize
Joonhee Kim \par
\nopagebreak\textsc{School of Mathematics, Korea Institute for Advanced Study, 85 Hoegiro, Dongdaemun-gu, 02455 Seoul, South Korea}\par 
\nopagebreak\textit{E-mail address}: \texttt{kimjoonhee@kias.re.kr}

}}

\Addresses

\end{document}